\newcommand{\beq}{\begin{equation}}
	\newcommand{\eeq}{\end{equation}}
\newcommand{\ben}{\begin{eqnarray}}
	\newcommand{\een}{\end{eqnarray}}
\newcommand{\beno}{\begin{eqnarray*}}
	\newcommand{\eeno}{\end{eqnarray*}}
\newcommand{\R}{\mathbb{R}}
\newtheorem{thm}{Theorem}[section]
\newtheorem{defi}[thm]{Definition}
\newtheorem{lem}[thm]{Lemma}
\newtheorem{prop}[thm]{Proposition}
\newtheorem{coro}[thm]{Corollary}
\newtheorem{rmk}[thm]{Remark}
\title[Blow up analysis for MEMS, I]{Blow up analysis for a parabolic MEMS problem, I: H\"{o}lder estimate}
\author[K. Wang]{Kelei Wang$^\dag$}
\address{$^\dag$School of Mathematics and Statistics \\ Wuhan University\\
	Wuhan 430072, China}
\email{wangkelei@whu.edu.cn}
\author[G. Yi]{Guangzeng Yi$^\ast$}
\address{$\ast$Department of Mathematics and Statistics\\ University of Jyv\"askyl\"a,
P.O. Box 35 (MaD)\\
FI-40014 University of Jyv\"askyl\"a\\
Finland}
\email{guangzeng.m.yi@jyu.fi}
\date{\today}
\thanks{K. Wang was supported by  National Key R\&D Program of China (No. 2022YFA1005602) and the National Natural Science Foundation of China (No. 12131017 and No. 12221001). }
\keywords{Parabolic MEMS equation; quenching phenomena; blow up analysis; stationary two-valued caloric function.}
\subjclass[2020]{35K58;35B44;35B45.}
\begin{document}

\begin{abstract}
This is the first in a series of papers devoted to the blow up analysis for the quenching phenomena in a parabolic MEMS equation. In this paper, we first give an optimal H\"{o}lder estimate for solutions to this equation by using the blow up method and some Liouville theorems on stationary two-valued caloric functions, and then establish a convergence theory for sequences of uniformly H\"{o}lder continuous solutions. These results are also used to prove a stratification theorem on the rupture set $\{u=0\}$.
\end{abstract}

\maketitle

\tableofcontents

\section{Introduction}\label{sec introduction}
\setcounter{equation}{0}
	
	Micro-electro-mechanical systems (MEMS)  are widely used devices in engineering and technology. The modelling of MEMS poses several interesting partial differential equations. Rigorous mathematical analysis of these PDEs has attracted a lot of attention in the past two decades.  For more backgrounds, we refer the reader to the monograph of Esposito-Ghoussoub-Guo \cite{Esposito-Ghoussoub-Guo} and the survey paper of  Lauren\c{c}ot-Walker \cite{Laurencot-Walker2017},  where an extensive list of results and references about this problem can also be found. Some more recent studies on these problems can be found in \cite{Esteve-Souplet2018, Esteve-Souplet2019, Laurencot-Walker2018}.
	
	An important feature of these PDEs is that their solutions could exhibit \emph{the quenching phenomena}. Consider for example the second order parabolic MEMS equation
	\begin{equation}\label{eqn}
		\partial_tu-\Delta u=-u^{-p}
	\end{equation}
	in $\Omega_T=\Omega\times (0,T)$, where $p>1$, $\Omega$ is a smooth domain in $\R^n$ (which could be unbounded). A Cauchy-Dirichlet boundary condition $u\lfloor_{\partial^p \Omega_T}=\varphi$ is imposed on the parabolic boundary $\partial^p\Omega_T$,  where $\varphi>0$ belongs to $C^{\alpha,\alpha/2}(\partial^p \Omega_T)$, with
	\[\alpha:=2/(p+1).\]
	This Cauchy-Dirichlet problem has a unique, positive solution, which however could exist only locally in time. 
	This is because, due to the  effect of  the nonlinearity $-u^{-p}$, the solution could touch down to $0$ at some time, where $u$ becomes singular. This is \emph{the quenching phenomena} in MEMS problem.
	
	In some special cases, e.g. when the spatial dimension is one or when the initial-boundary value $\varphi$ takes some special forms (e.g. being a constant),  there are several results describing the nature of this quenching phenomena, see  \cite[Chapter 10]{Esposito-Ghoussoub-Guo}. However, to the best of our knowledge, the analysis of this phenomena in  general  is not done yet. In this series of papers, we will develop a blow up analysis approach to this problem, which  allows us to give a  precise description of how solutions to \eqref{eqn} approach $0$ in  finite time, and the structure of the rupture set $\{u=0\}$.
	
	In this first paper, we will establish an optimal H\"{o}lder estimate for solutions to \eqref{eqn},  and use this estimate to give a convergence theory for sequences of solutions to \eqref{eqn}. These two results are used in this paper to estimate the Hausdorff dimension of the rupture set $\{u=0\}$. These results will also be used in sequel papers for further analysis of the quenching phenomena in \eqref{eqn}.

	\section{Main results}\label{section main result}
	\setcounter{equation}{0}

	We  assume $u$ is a local solution to the Cauchy-Dirichlet problem of \eqref{eqn} in $\Omega_T$, where $T>0$ is the first quenching moment, i.e., $u(x,t)>0$ for any $(x,t)\in\Omega\times (0,T)$, while
	\[\lim_{t\to T^-}\inf_{x\in\Omega}u(x,t)=0.\]
	The first main result of this paper is a global H\"{o}lder estimate on $u$. In the following $C^{\alpha,\alpha/2}(\overline{\Omega_T})$ denotes the space of $\alpha$-H\"{o}lder continuous functions with respect to the parabolic distance.
	\begin{thm}\label{thm 1.1}
		If $u$ is the smooth solution of the Cauchy-Dirichlet problem for \eqref{eqn} with positive initial boundary value $\varphi\in C^{\alpha,\alpha/2}(\partial^p\Omega_T)$, then $u\in C^{\alpha,\alpha/2}(\overline{\Omega_T})$.
	\end{thm}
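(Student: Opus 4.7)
The plan is to argue by contradiction via parabolic blow-up, exploiting the fact that $\alpha=2/(p+1)$ is the unique scaling exponent under which \eqref{eqn} is invariant: if $u$ solves \eqref{eqn}, then so does $r^{-\alpha}u(z_0+r\cdot,\,t_0+r^2\cdot)$ for every $r>0$, so the announced Hölder exponent is exactly the critical scaling exponent. Interior parabolic theory gives smoothness of $u$ on the open set $\{u>0\}\cap\Omega_T$, and the assumption $\varphi\in C^{\alpha,\alpha/2}(\partial^p\Omega_T)$ controls the parabolic boundary, so any failure of global $C^{\alpha,\alpha/2}$ regularity must come from pairs of points whose parabolic distance to the rupture set $\{u=0\}$ shrinks to zero.

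Assuming the conclusion fails, a maximizing-pair selection of Leon Simon type will produce $(P_k,P_k')\in\overline{\Omega_T}^2$ at parabolic distance $r_k\to 0$ with $\Lambda_k:=r_k^{-\alpha}|u(P_k)-u(P_k')|\to\infty$, chosen so that up to a universal constant no larger Hölder quotient occurs on any scale $\geq r_k$. The rescaling
\[
v_k(x,t):=\Lambda_k^{-1}r_k^{-\alpha}u\bigl(P_k+(r_k x,\,r_k^2 t)\bigr)
\]
satisfies $\partial_t v_k-\Delta v_k=-\Lambda_k^{-(p+1)}v_k^{-p}$, so that the singular right-hand side carries the vanishing prefactor $\Lambda_k^{-(p+1)}$. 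By construction $v_k$ has oscillation of order one on the unit parabolic cylinder $Q_1$, and the near-maximality enforces the uniform sublinear bound $\operatorname{osc}_{Q_R}v_k\lesssim R^\alpha$ on every $Q_R$ with $R\geq 1$. Subtracting $v_k(0,0)$ and extracting a subsequence, parabolic compactness produces a non-constant entire limit $v_\infty$ on $\R^n\times\R$ with global growth $|v_\infty(x,t)|\lesssim(|x|+|t|^{1/2})^\alpha$; if the $(P_k,P_k')$ accumulate on $\partial^p\Omega_T$, the limit is instead defined on a parabolic half-space and inherits $C^{\alpha,\alpha/2}$ boundary values from $\varphi$.

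The contradiction is then extracted by Liouville theorems. When the rescaled solutions stay uniformly bounded away from zero along the sequence, $v_\infty$ is a classical caloric function of growth strictly less than one, hence constant by the parabolic Liouville principle, contradicting the non-trivial oscillation on $Q_1$. The genuinely delicate case is when the rescaled rupture sets do not collapse, so that $v_\infty$ is only a \emph{stationary two-valued caloric function} in the sense developed in this paper --- caloric on its positivity set with an appropriate variational matching condition across the rupture set --- and one concludes by invoking the Liouville theorems for such objects announced in the abstract. The main obstacle will be this last step: one must pass to the weak limit in the singular nonlinearity, identify precisely which stationarity/free-boundary condition on $v_\infty$ survives the blow-up so that the limit genuinely falls into the class covered by those Liouville theorems, and then dispose of it via the Liouville theorems themselves, which are non-trivial technical inputs of the series.
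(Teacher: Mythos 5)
Your strategy coincides with the paper's: contradiction, selection of a near-maximal H\"{o}lder pair, rescaling by $\lambda_i=\delta(X_i,Y_i)$ with the extra normalization $L_i^{-1}$ so that the nonlinearity acquires the vanishing factor $L_i^{-(p+1)}$, and then a dichotomy between a caloric limit (killed by the parabolic Liouville theorem) and a stationary two-valued caloric limit (killed by Theorem \ref{thm Liouville}). But as written the proposal stops exactly where the real work begins. The step you label ``the main obstacle''---showing that the blow-up limit satisfies all the conditions of Definition \ref{stationary two-valued caloric fct}, in particular the stationarity identity \eqref{stationary condition 2} and the localized energy inequality \eqref{localized energy inequality 2}---is the analytic core of the proof and is not an external input from the series: it requires proving strong $L^2_{loc}$ convergence of $\nabla \widetilde u_i$, weak $L^2_{loc}$ convergence of $\partial_t\widetilde u_i$, and $\varepsilon_i\widetilde u_i^{1-p}\to 0$ in $L^1_{loc}$ (the paper's Lemma \ref{lem 4.01}, obtained by testing with $\widetilde u_i\eta^2$ and $\partial_t\widetilde u_i\eta^2$ and a Sard-type level-set argument for the limit), after which the rescaled stationarity and energy identities pass to the limit (Lemma \ref{lem 4.03}). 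Without this, invoking the two-valued Liouville theorem is not justified, since mere locally uniform convergence does not place the limit in that class.

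Two further gaps are concrete. First, you assume the bad pairs have $r_k\to 0$; since $\Omega$ may be unbounded, $u$ need not be bounded and this is not automatic. The paper must separately dispose of $\lambda_i\to\lambda_\infty>0$ and $\lambda_i\to+\infty$, using comparison with the caloric extension $\Phi_i$ of the boundary data and the uniform H\"{o}lder bound on $\varphi$ (and, for blow-ups near the initial time, uniqueness of the Cauchy problem plus a continuation argument---a case your half-space discussion omits). Second, in the boundary case it is not enough that the limit ``inherits $C^{\alpha,\alpha/2}$ boundary values from $\varphi$'': the decisive point is that the rescaled boundary data has H\"{o}lder seminorm $O(L_i^{-1})$, hence the limit is a \emph{constant} $A$ on the lateral boundary, and a comparison argument gives $0\le\widetilde u_\infty\le A$ (Lemma \ref{lem 4.02}); it is this constancy that permits the odd reflection in the caloric case and the half-space Liouville theorem (Theorem \ref{thm half space}, which is stated for constant boundary data) in the two-valued case. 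A limit with merely H\"{o}lder boundary data would not be covered by any Liouville statement available here.
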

	Notice that  \eqref{eqn} is scaling invariant in the sense that if $u$ is a solution, then for any $\lambda>0$,
	\[u_\lambda(x,t):=\lambda^{-\alpha}u(\lambda x, \lambda^2t)\]
	is also a solution.
	The exponent $\alpha=2/(p+1)$ is compatible with this scaling invariance. This H\"{o}lder estimate is optimal in view of some explicit solutions of \eqref{eqn} such as
	\[u_0(t):= (p+1)^{\frac{1}{p+1}} (-t)^{\frac{1}{p+1}}, \quad \text{for}~ t<0,\]
	and
	\[u(x)=\left[\frac{2}{p+1}\left(n-2+\frac{2}{p+1}\right)\right]^{-\frac{1}{p+1}}|x|^{\frac{2}{p+1}}, \quad \text{for} ~ x\in\R^n, ~ n\geq 2.\]

	By this H\"{o}lder estimate, we can define the trace of $u$ at time $T$, $u(x,T)$, by continuation. In particular, the rupture set
	\[\Sigma_T:=\{x:u(x,T)=0\}\]
	is a closed subset of $\overline{\Omega}$. A direct consequence of this theorem is, if the boundary value $\varphi$ is strictly positive, then there is no rupture point on the boundary. This was previously  known only in the case when $\Omega$ is convex, see \cite[Theorem 8.3.1]{Esposito-Ghoussoub-Guo}.
	\begin{coro}\label{coro 1.2}
		If $\varphi(x,T)>0$ on $\partial\Omega$, then $\Sigma_T$ is compactly supported in $\Omega$.
	\end{coro}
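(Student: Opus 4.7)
The plan is to derive the corollary directly from the global Hölder estimate of Theorem \ref{thm 1.1}. First I would use $u \in C^{\alpha,\alpha/2}(\overline{\Omega_T})$ to extend $u$ continuously up to time $T$, defining the trace $u(\cdot,T)$ as a continuous function on $\overline{\Omega}$. Since the parabolic boundary data of $u$ is $\varphi$ and both $u$ and $\varphi$ are continuous, the identity $u(x,T)=\varphi(x,T)$ persists on $\partial\Omega$; by hypothesis this trace is strictly positive on $\partial\Omega$.

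Next, since $u(\cdot,T)$ is continuous and positive on $\partial\Omega$, for every boundary point $x_0\in\partial\Omega$ the Hölder estimate yields a parabolic neighborhood of $(x_0,T)$ in $\overline{\Omega_T}$ on which $u\geq\varphi(x_0,T)/2>0$. Intersecting with the time slice $t=T$ gives a relative neighborhood of $x_0$ in $\overline{\Omega}$ on which $u(\cdot,T)$ stays bounded away from zero. Taking the union of such neighborhoods over $x_0\in\partial\Omega$ produces an open neighborhood of $\partial\Omega$ in $\overline{\Omega}$ disjoint from $\Sigma_T$, so $\Sigma_T\subset\Omega$ has positive distance to $\partial\Omega$. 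Combined with the closedness of $\Sigma_T$ and the compactness of $\overline{\Omega}$ (or, in the unbounded case, with the boundedness of $\Sigma_T$ which must be included among the standing assumptions), this gives the claim.

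The only real content here is the boundary Hölder continuity supplied by Theorem \ref{thm 1.1}, so there is no genuine obstacle in deducing the corollary. The improvement over \cite[Theorem 8.3.1]{Esposito-Ghoussoub-Guo}, which requires $\Omega$ convex and relies on moving-plane/barrier arguments sensitive to geometry, comes entirely from having the universal up-to-boundary regularity in hand.
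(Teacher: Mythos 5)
Your argument is correct and is exactly the ``direct consequence'' the paper has in mind: the global H\"{o}lder estimate of Theorem \ref{thm 1.1} gives a continuous trace $u(\cdot,T)$ agreeing with $\varphi(\cdot,T)>0$ on $\partial\Omega$, so a neighborhood of the boundary is rupture-free and the closed set $\Sigma_T$ is compactly contained in $\Omega$. Your caveat about the unbounded case (needing uniform positivity of $\varphi$ or boundedness of $\Sigma_T$) is a fair observation, but otherwise there is nothing to add.
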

	
	Theorem \ref{thm 1.1}  corresponds to the H\"{o}lder estimate for the elliptic version of \eqref{eqn},
 \[\Delta u=u^{-p},\]
which was established in Davila-Wang-Wei \cite{Davila-Wang-Wei}. The main idea is to use the blow up method, and then reduce the estimate to some Liouville properties for entire harmonic functions or \emph{stationary two-valued harmonic functions} in the elliptic case,  and  for ancient solutions to the heat equation or \emph{stationary two-valued caloric functions} in the parabolic case.  
\begin{defi}\label{stationary two-valued caloric fct}[Stationary two-valued  caloric functions]
		Given a domain $\Omega\subset\R^{n+1}$,
		a nonnegative function $u\in L^1_{loc}(\Omega)$ is called a stationary two-valued caloric function if
		\begin{description}
			\item [(i)] $\nabla u, \partial_tu\in L^2_{loc}(\Omega)$;
			\item [(ii)] $\Delta u-\partial_tu\geq0$ in the distributional sense;
			\item [(iii)] $u(\partial_tu-\Delta u)=0$ in the sense that, for any $\eta\in C_0^\infty(\Omega)$,
			\[\int_{\Omega} \partial_tu u\eta^2+|\nabla u|^2\eta^2+2\eta u \nabla u\cdot \nabla\eta=0;\]

			\item [(iv)] for any vector field $Y \in C_0^{\infty}(\Omega,\R^{n})$,
			\begin{equation}\label{stationary condition 2}
				\int_{\Omega}|\nabla u|^{2}divY - 2DY(\nabla u, \nabla u) - 2 \partial_tu \nabla u \cdot Y = 0;
			\end{equation}
			
			\item [(v)] $u$ satisfies the localized energy inequality, that is, for any $\eta\in C_0^{\infty}(\Omega)$,
			\begin{equation}\label{localized energy inequality 2}
            \begin{split}
                \frac{d}{dt}\int_{\mathbb{R}^{n}} |\nabla u|^2 \eta^2 \leq&-2\int_{\R^{n}}|\partial_tu|^2 \eta^2+2\int_{\R^{n}} |\nabla u|^2 \eta \partial_t\eta\\&- 4\int_{\R^{n}}\partial_tu\eta \nabla u \cdot \nabla\eta.
            \end{split}
			\end{equation}
		\end{description}
  \begin{rmk}
     \begin{itemize}
         \item Although we only assume $u$ to be $L^1_{loc}$, by the subcaloric property (ii) and because $u$ is nonnegative, $u\in L^\infty_{loc}$. 
         \item The elliptic version of the above notion is stationary two-valued  harmonic function. This is a special case of Almgren's $Q$-valued functions, see \cite{Almgren-big} and \cite{DeLellis-Spadaro}. Although in these works only Dirichlet energy minimizers are considered, recently it is found that stationary ones arise in various settings, see \cite{Bouafia,Bouafia-DePauw-Wang,hirsch2022interior,krummel2013fine,lee2023lebesgue,Lin2014}.
     \end{itemize} 
  \end{rmk}
	\end{defi}

	The method in \cite{Davila-Wang-Wei} is  inspired by
	the blow up method in  Noris-Tavares-Terracini-Verzini \cite{Terracini2010}, where  they give a uniform H\"{o}lder estimate for sequences of solutions to some singularly perturbed elliptic systems such as 
 \[\Delta u_{i,\beta}=\beta u_{i,\beta}\sum_{j\neq i} u_{j,\beta}^2, \quad i=1,\dots, N, \quad \beta\to+\infty.\]
See also Dancer-Wang-Zhang \cite{Dancer-Wang-Zhang} for the corresponding parabolic case of this singular perturbation problem.

The reason that  stationary two-valued caloric functions arise in this blow up argument is due to  the following observation. If the H\"{o}lder semi-norm of a solution $u$  is too large, then we  define a rescaling
\[\widetilde{u}(x,t):=L^{-1}\lambda^{-\frac{2}{p+1}} u(x_0+\lambda x, t_0+\lambda^2 t),\]
where $(x_0,t_0)$ is a well-chosen base point, $\lambda$ is a scaling parameter and $L$ is a large normalization constant which is chosen to make sure that the H\"{o}lder semi-norm of $\widetilde{u}$ is equal to $1$. (This normalization condition will lead to a contradiction with some Liouville properties.) The introduction of $L$  here of course destroys the scaling invariance of the original equation. In fact now the equation is changed to
\begin{equation}\label{1}
\partial_t\widetilde{u}-\Delta\widetilde{u}=-L^{-p-1}\widetilde{u}^{-p}.
\end{equation}
Because $L$ is large, we see that $\widetilde{u}$ is almost caloric in its positivity set $\{\widetilde{u}>0\}$, which will lead to the condition (ii) in Definition \ref{stationary two-valued caloric fct} after passing to the limit.  However, in order to establish Liouville theorems, we also need  \emph{the stationary condition} and  \emph{the localized energy identity}. This follows by taking limit in  a suitable \emph{stationary condition} and  \emph{localized energy identity} associated to \eqref{1}.

	This blow up method can also be used to establish an interior uniform H\"{o}lder estimate.
	\begin{thm}\label{thm 1.3}
		Suppose $(u_i)$ is a sequence of positive solutions of \eqref{eqn} in $Q_2^-:=B_2\times(-4,0]$, satisfying
		\begin{equation}\label{14}
			\sup\limits_{i}\int_{Q_2^-}u_i(x,t)<+\infty.
		\end{equation}
		Then there exists a constant $C$ independent of $i$ such that
		\[\|u_i\|_{C^{\alpha,\alpha/2}(\overline{Q_1^-})} \leq C.\]
	\end{thm}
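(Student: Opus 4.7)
The plan is to argue by contradiction, adapting to the interior/sequence setting the same blow-up scheme indicated in the discussion of Theorem \ref{thm 1.1} and originating in Noris-Tavares-Terracini-Verzini \cite{Terracini2010}. Suppose the conclusion fails, so that after passing to a subsequence the Hölder seminorms $L_i := [u_i]_{C^{\alpha,\alpha/2}(\overline{Q_1^-})}$ diverge. Since the right-hand side $-u_i^{-p}$ is nonpositive, $u_i$ is subcaloric, and combined with the uniform $L^1$ bound \eqref{14} the parabolic mean value inequality provides a uniform $L^\infty$ bound for $u_i$ on every $Q_r^-$ with $r<2$. A doubling-selection argument of NTTV-type then produces, after slightly shrinking the domain, a sequence of pairs $(x_i,t_i),(y_i,s_i)$ of parabolic distance $\lambda_i\to 0$ where the Hölder quotient of $u_i$ is essentially $L_i$, with $L_i$ still dominating the Hölder quotient on every sub-cylinder of macroscopic rescaled size.

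Rescaling via
\[
\tilde u_i(x,t) := L_i^{-1}\lambda_i^{-\alpha}\, u_i(x_i+\lambda_i x,\, t_i+\lambda_i^2 t)
\]
yields positive functions satisfying $\partial_t\tilde u_i -\Delta\tilde u_i = -L_i^{-p-1}\tilde u_i^{-p}$ on cylinders exhausting $\R^n\times(-\infty,0]$, of locally uniform $C^{\alpha,\alpha/2}$-seminorm bounded by $1$, and realizing a unit-order Hölder quotient between $(0,0)$ and a fixed companion point at unit parabolic distance. The argument now splits by the behavior of $\tilde u_i(0,0)$. If $\tilde u_i(0,0)\to\infty$, the Hölder bound forces $\tilde u_i\to+\infty$ locally uniformly, making the singular term $L_i^{-p-1}\tilde u_i^{-p}$ vanish uniformly on compacta; the centered sequence $\tilde v_i:=\tilde u_i-\tilde u_i(0,0)$ then converges along a subsequence to an ancient caloric function $v_\infty$ on $\R^n\times(-\infty,0]$ of $C^{\alpha,\alpha/2}$ growth with a nontrivial Hölder quotient, contradicting the Liouville theorem for ancient caloric functions of sub-$1$-Hölder growth (recall $\alpha=2/(p+1)<1$).

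If instead $\tilde u_i(0,0)$ stays bounded, the Hölder bound yields a local $L^\infty$ bound and, along a subsequence, $\tilde u_i\to\tilde u_\infty$ locally uniformly, with $\tilde u_\infty\ge 0$ Hölder continuous and of nontrivial Hölder seminorm. I would then verify that $\tilde u_\infty$ is a stationary two-valued caloric function in the sense of Definition \ref{stationary two-valued caloric fct}. Property (ii) passes trivially because $-L_i^{-p-1}\tilde u_i^{-p}\le 0$. For (iii), (iv), and (v) I would test the rescaled equation against the quantities $\tilde u_i\eta^2$, $\nabla \tilde u_i\cdot Y$, and $\partial_t\tilde u_i\eta^2$ respectively, and pass to the limit. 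A Liouville theorem for stationary two-valued caloric functions of globally bounded $C^{\alpha,\alpha/2}$ seminorm would then yield the sought contradiction.

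The principal obstacle is the last passage to the limit: in this second case, $\tilde u_\infty$ may vanish on a set of positive measure and the singular factor $\tilde u_i^{-p}$ concentrates precisely there. The a priori bound $L_i^{-p-1}\int \tilde u_i^{-p}\eta \le C$ obtained by integrating the equation against a cutoff controls the zeroth-order singular term, but to handle the energy identity and the stationary identity I would truncate by replacing $\tilde u_i$ in the test quantity by $(\tilde u_i-\varepsilon)_+$ (or a smooth variant). On $\{\tilde u_i>\varepsilon\}$ the singularity $\tilde u_i^{-p}\le \varepsilon^{-p}$ is controlled, so sending $i\to\infty$ first (using $L_i^{-p-1}\to 0$) and then $\varepsilon\to 0$ (using the a priori energy bound to recover the full identity) delivers the limiting equations without contributions from the singular term. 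With those identities in hand the blow-up argument closes in the spirit of Theorem \ref{thm 1.1}.
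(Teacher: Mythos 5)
Your proposal follows the same blow-up-by-contradiction strategy as the paper, with the same case split on the behaviour of $\tilde u_i(0,0)$ and the same reduction to Liouville theorems for ancient caloric and stationary two-valued caloric functions. The only (cosmetic) differences are technical: where you invoke an NTTV-style doubling selection to localize, the paper multiplies by a cutoff $\eta$ and blows up $\hat u_i:=u_i\eta$, exploiting the resulting linear decay bound $\tilde u_i(X)\lesssim L_i^{-1}\lambda_i^{(p-1)/(p+1)}\delta(X,\partial Q^i)$ to force the rescaled domains to exhaust $\R^{n}\times(-\infty,0]$; and where you propose a truncation $(\tilde u_i-\varepsilon)_+$ to handle the concentrating singular term, the paper obtains the vanishing of $\varepsilon_i\tilde u_i^{1-p}$ in $L^1_{loc}$ and the strong $L^2$ convergence of $\nabla\tilde u_i$ by a Sard-theorem slicing argument on the level sets $\{\tilde u_\infty=\tau\}$ (Lemma \ref{lem 4.01}), which is the same idea applied to the limit instead of the approximants.
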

	
	Next we discuss how a sequence of solutions to \eqref{eqn}	converge and what is their singular limit. We need some notions on weak solutions of \eqref{eqn}.
	\begin{defi}\label{def 1.4}
		A function $u$ is called an $L^1$ weak solution of \eqref{eqn} if $u\geq0$, $u\in L^1_{loc}$, $u^{-p}\in L^1_{loc}$, and it satisfies \eqref{eqn} in the distributional sense.
	\end{defi}
	
	\begin{defi}\label{def 1.5}
		A function $u$ (which, without loss of generality, is assumed to be defined in $Q_1:=B_1\times(-1,1)$) is called a suitable
		weak solution of \eqref{eqn} if
		\begin{description}
			\item [(i)]~$u$ is an $L^1$ weak solution;
			\item [(ii)]~$u$ is stationary in the sense that, for any vector field $Y\in C_0^{\infty}(Q_1,\mathbb R^n)$,
			\begin{equation}\label{stationary condition}
				\int \left(\frac{1}{2}|\nabla u|^2-\frac{u^{1-p}}{p-1}\right)\text{div} Y-DY(\nabla u,\nabla u)-\partial_tu(\nabla u\cdot Y)=0,
			\end{equation}
			where $\text{div}Y$ is the divergence of $Y$, and
			\[DY(\nabla u,\nabla u):=\sum_{i,j=1}^{n}\frac{\partial Y_i}{\partial x_j}\frac{\partial u}{\partial x_i}\frac{\partial u}{ \partial x_j};\]

			\item [(iii)]  $u$ satisfies the localized energy inequality, that is, for any $\eta\in C_0^{\infty}(\R^{n+1})$,
			\begin{equation}\label{localized energy inequality}
				\begin{split}
					\frac{\mathrm{d}}{\mathrm{d} t} \int_{\mathbb{R}^{n}} \left(\frac{|\nabla u|^2}{2}-\frac{u^{1-p}}{p-1}\right) \eta^2 \leq& -  \int_{\mathbb{R}^{n}}|\partial_tu|^2 \eta^2-2 \int_{\mathbb{R}^{n}} \partial_tu \left(\nabla u \cdot \nabla \eta\right)\eta\\
					&+2\int_{\mathbb{R}^{n}} \left(\frac{|\nabla u|^2}{2}-\frac{u^{1-p}}{p-1}\right)\eta \partial_t\eta.
				\end{split}
			\end{equation}
		\end{description}
	\end{defi}
	In the above, the energy inequality should be understood in the distributional sense. For positive smooth solutions, the stationary condition \eqref{stationary condition} can be obtained by multiplying (\ref{eqn}) by $\nabla u\cdot Y$ and integrating by parts, and the energy inequality (\ref{localized energy inequality}) becomes an identity, which can be obtained by testing (\ref{eqn}) with $\partial_tu\eta^2$ and integrating by parts.

	For any sequence of uniformly $\alpha$-H\"{o}lder continuous, suitable weak solutions, the following convergence result holds.
	\begin{thm}\label{thm 1.6}
		Let $(u_i)$ be a sequence of suitable weak solutions of \eqref{eqn} in $Q_2^-$ satisfying
		\begin{equation}\label{15}
			\sup_{i}\|u_i\|_{C^{\alpha,\alpha/2}(\overline {Q^-_{3/2}})}<+\infty.
		\end{equation}
		Then up to a subsequence,
		\begin{description}
			\item [(i)]  $u_i\to u_\infty$ uniformly in $Q_1^-$;
			\item [(ii)] $\nabla u_i\to\nabla u_{\infty}$ strongly in $L^2(Q_1^-)$, $\partial_tu_i\rightharpoonup\partial_tu_{\infty}$ weakly in $L^2(Q_1^-)$ and $u_i^{-p}\to u_{\infty}^{-p}$ strongly in $L^1(Q_1^-)$;
			\item [(iii)] outside $\{u=0\}$, $u_i$ converges to $u_{\infty}$  locally smoothly;
			\item [(iv)] $u_\infty$ is a suitable weak solution of \eqref{eqn} in $Q_1^-$.
		\end{description}
	\end{thm}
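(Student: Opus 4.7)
The plan is to build the four claims up in increasing strength, starting from compactness and ending with passage to the limit in the stationary identity and the energy inequality.

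First I dispatch (i) and (iii). Arzel\`a--Ascoli applied to (\ref{15}) yields a subsequence with $u_i\to u_\infty$ uniformly on $\overline{Q_1^-}$, and $u_\infty$ inherits the H\"older exponent. On the open set $\{u_\infty>0\}$ uniform convergence keeps the $u_i$ bounded away from zero on compact subsets, so $-u_i^{-p}$ is smooth and uniformly bounded in every $C^k$. A standard bootstrap with interior Schauder estimates for linear parabolic equations then produces uniform $C^{k,\alpha}$ bounds and yields the locally smooth convergence in (iii).

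Next I set up the a priori bounds needed for (ii). Testing the $L^1$ weak equation against a nonnegative $\eta\in C_0^\infty(Q_{3/2}^-)$ and integrating by parts gives
\[\int u_i^{-p}\eta=\int u_i(\partial_t\eta+\Delta\eta),\]
so $u_i^{-p}$ and $u_i^{1-p}=u_i u_i^{-p}$ are uniformly bounded in $L^1_{\rm loc}$ by (\ref{15}). Testing against $u_i\eta^2$ and absorbing via Cauchy--Schwarz yields a uniform $L^2_{\rm loc}$ bound on $\nabla u_i$; combined with (\ref{localized energy inequality}) this also yields a uniform $L^2_{\rm loc}$ bound on $\partial_tu_i$. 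Along a further subsequence, $\nabla u_i\rightharpoonup\nabla u_\infty$ and $\partial_tu_i\rightharpoonup\partial_tu_\infty$ weakly in $L^2_{\rm loc}(Q_1^-)$.

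The main obstacle is upgrading these to the strong convergences in (ii), i.e.\ excluding concentration of $|\nabla u_i|^2$ and $u_i^{-p}$ on $\{u_\infty=0\}$; Fatou applied to the $L^1$ bound above already forces $u_\infty^{-p}\in L^1_{\rm loc}$, so this set has measure zero. My plan is to exploit the energy identity obtained from testing with $u_i\eta^2$, rearranged as
\[\int|\nabla u_i|^2\eta^2+\int u_i^{1-p}\eta^2=\int u_i^2\eta\,\partial_t\eta-2\int u_i\eta\,\nabla u_i\cdot\nabla\eta,\]
whose right-hand side converges as $i\to\infty$ by uniform convergence of $u_i$ and by weak--strong pairing of $\nabla u_i$ against $u_i\eta\nabla\eta\in L^2$. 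Smooth convergence on $\{u_\infty>0\}$ produces the same identity for $u_\infty$ on its positivity set, which extends to the whole domain because $\{u_\infty=0\}$ is null. Combined with the Fatou/lower-semicontinuity bounds $\int|\nabla u_\infty|^2\eta^2\leq\liminf\int|\nabla u_i|^2\eta^2$ and $\int u_\infty^{1-p}\eta^2\leq\liminf\int u_i^{1-p}\eta^2$, this forces equality in both, i.e.\ simultaneous norm convergence. Weak plus norm convergence in the Hilbert space $L^2$ then gives the strong $L^2$ convergence of $\nabla u_i$, while Brezis--Lieb applied to $u_i^{1-p}$ (nonnegative and a.e.\ convergent off the null set) gives strong $L^1$ convergence of $u_i^{1-p}$, and hence of $u_i^{-p}$.

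Finally, (iv) follows by passing to the limit in Definition~\ref{def 1.5}. The distributional equation passes by uniform convergence of $u_i$ and strong $L^1$ convergence of $u_i^{-p}$. In the stationary identity (\ref{stationary condition}), strong $L^2$ convergence of $\nabla u_i$ handles the quadratic gradient terms, weak $L^2$ convergence of $\partial_tu_i$ paired with strong $L^2$ of $\nabla u_i$ handles the cross term, and strong $L^1$ of $u_i^{1-p}$ handles the lower-order term. The localized energy inequality (\ref{localized energy inequality}) is preserved in the limit by weak $L^2$ lower semicontinuity of the $|\partial_tu_i|^2$ term, while all remaining terms converge strongly.
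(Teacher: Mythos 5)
Your items (i) and (iii), the a priori $L^2$/$L^1$ bounds, the weak convergences, and the Fatou argument showing $|\{u_\infty=0\}|=0$ and $u_\infty^{-p}\in L^1_{loc}$ are all fine and parallel the paper. The genuine gap is the final claim of (ii): ``strong $L^1$ convergence of $u_i^{1-p}$, and hence of $u_i^{-p}$.'' This implication does not follow. Near the rupture set $u_i^{1-p}=u_i\cdot u_i^{-p}$ is damped by the uniformly small factor $u_i$, so convergence of $\int u_i^{1-p}$ is blind to a concentration of $u_i^{-p}$ on $\{u_\infty=0\}$: from your ingredients one only gets, along a subsequence, $u_i^{-p}\rightharpoonup u_\infty^{-p}\,dX+\nu$ as measures with a nonnegative defect $\nu$ supported on $\{u_\infty=0\}$, and your energy-identity test cannot detect $\nu$ because it pairs the equation with $u_\infty\eta^2$, which vanishes identically on the support of $\nu$. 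Since this $L^1$ convergence of $u_i^{-p}$ is precisely what your step (iv) invokes to pass to the limit in the distributional equation (otherwise the limit equation acquires the defect $\nu$), the gap propagates to (iv). The paper kills the defect by a quantitative non-concentration mechanism absent from your proposal: the scaled estimate $\int_{Q_r^-(X)}u_i^{-p}\le Cr^{n+\alpha}$ of \eqref{es1} in Lemma \ref{lemma 4.2} (cutoff test plus the uniform H\"older bound), the content bound $\mathcal{P}^{n+\alpha}(\{u_\infty=0\}\cap Q_1^-)=0$ of Lemma \ref{lemma 4.3} (nondegeneracy \eqref{es4}, H\"older continuity and absolute continuity of $\int u_\infty^{-p}$), and the covering argument of Lemma \ref{lemma 4.5}. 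Your Lebesgue-null observation is too weak here: the exponent that matches \eqref{es1} is $n+\alpha$, not $n+2$.

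A secondary, smaller gap: the identity $\int|\nabla u_\infty|^2\eta^2+\int u_\infty^{1-p}\eta^2=\int u_\infty^2\eta\,\partial_t\eta-2\int u_\infty\eta\,\nabla u_\infty\cdot\nabla\eta$ is asserted to ``extend to the whole domain because $\{u_\infty=0\}$ is null,'' but a null set can still carry boundary terms in the integration by parts; this needs either a level-set argument as in Lemma \ref{lem 4.01} (test on $\{u_\infty>\tau\}$ for Sard-generic $\tau$ and show the boundary contribution is $O(\tau)$ using $u_\infty^{-p}\in L^1$), or, as the paper does, the prior fact that $u_\infty$ is an $L^1$ weak solution (Corollary \ref{re 4.6}) --- which itself rests on the $L^1$ convergence of $u_i^{-p}$ you have not established. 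Granting that identity, the rest of your bookkeeping (weak plus norm convergence gives strong $L^2$ convergence of $\nabla u_i$; the Fatou splitting of the two terms; Brezis--Lieb for $u_i^{1-p}$; and the limit passage in \eqref{stationary condition} and \eqref{localized energy inequality}) is correct and matches Lemma \ref{lemma 4.7} and Corollary \ref{coro 4.8}; the missing idea is the non-concentration of $u_i^{-p}$ on the rupture set.
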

	
	Finally, we use this convergence theorem and blow up analysis to investigate the structure of the rupture set $\{u=0\}$.
	\begin{thm}\label{thm 1.7}
		If $u$ is a $C^{\alpha,\alpha/2}$-continuous suitable weak solution of \eqref{eqn} in $Q_1$, then the parabolic Hausdorff dimension of $\{u=0\}$  is at most $n$.
	\end{thm}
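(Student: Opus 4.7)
My plan is to combine the blow-up apparatus of Theorem \ref{thm 1.6} with a Giga--Kohn/Weiss-type monotonicity formula and a parabolic Federer dimension reduction. For any $(x_0, t_0) \in \{u = 0\}$, consider the parabolic rescalings
\[
u_\lambda(x, t) := \lambda^{-\alpha}\, u(x_0 + \lambda x,\, t_0 + \lambda^2 t),
\]
each a suitable weak solution of \eqref{eqn} by scaling invariance. Since $u \in C^{\alpha, \alpha/2}$ vanishes at $(x_0, t_0)$, the family $\{u_\lambda\}$ is uniformly $\alpha$-H\"older continuous on every compact subset of $\R^n \times (-\infty, 0]$, so Theorem \ref{thm 1.6} yields, along some $\lambda_k \to 0$, a blow-up limit $u_\star$ that is an $\alpha$-H\"older suitable weak solution on $\R^n \times (-\infty, 0]$ with $u_\star(0, 0) = 0$, together with strong $L^2$ convergence of gradients and strong $L^1$ convergence of $u_{\lambda_k}^{-p}$.

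Next I would derive a Giga--Kohn/Weiss-type monotonicity formula at the base point. Using the stationary condition \eqref{stationary condition} against radial vector fields and the localized energy inequality \eqref{localized energy inequality} tested against a backward heat-kernel weight $G$ adapted to parabolic self-similar scaling, the natural candidate is of the form
\[
\Phi(r) \;=\; r^{2\alpha}\!\int_{-r^2}^{0}\!\!\int_{\R^n}\!\Bigl[\tfrac{1}{2}|\nabla u|^2 - \tfrac{u^{1-p}}{p-1}\Bigr](x_0 + x,\, t_0 + t)\, G(x, -t)\,dx\,dt
\]
plus a Weiss-style boundary correction in $u^2 G$ matching the $\alpha$-homogeneous scaling. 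One would show $\tfrac{d}{dr}\Phi \geq 0$, with equality iff $u$ is backward self-similar of homogeneity $\alpha$ at $(x_0, t_0)$. The strong convergences from Theorem \ref{thm 1.6} allow $\Phi$ to pass to blow-up limits, and scale invariance then gives $\Phi(\cdot; u_\star; 0, 0) \equiv \Phi(0^+; u; x_0, t_0)$, so every $u_\star$ satisfies $u_\star(\mu x, \mu^2 t) = \mu^\alpha u_\star(x, t)$.

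For a backward self-similar $u_\star$ I define the spatial spine $V(u_\star) := \{y \in \R^n : u_\star(\cdot + y, \cdot) \equiv u_\star\}$, a linear subspace, and stratify
\[
S^j := \bigl\{(x_0, t_0) \in \{u = 0\} : \dim V(u_\star) \leq j \text{ for every blow-up } u_\star\bigr\}, \qquad j = 0, 1, \dots, n.
\]
A parabolic Federer reduction, iterating blow-ups and using upper semi-continuity of the density $\Phi(0^+)$ together with the fact that translating the base point along a spine direction produces a blow-up with strictly larger spine, yields $\dim_{H,p}(S^{j}) \leq j$ for $j < n$. For the top stratum $\{u = 0\} \setminus S^{n-1}$: $\dim V(u_\star) = n$ forces $u_\star$ to depend only on $t$, so by self-similarity $u_\star(t) = c(-t)^{1/(p+1)}$ for $t \leq 0$. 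When $c > 0$ this locally confines the rupture set to the single time-slice $\{t = t_0\}$, whence a countable-covering argument gives parabolic dimension at most $n$; the degenerate case $c = 0$ cannot populate a set of positive $n$-dimensional parabolic Hausdorff measure, by upper semi-continuity of the density. Summing over the strata yields $\dim_{H,p}\{u = 0\} \leq n$.

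The two main difficulties, I expect, are: (i) rigorously deriving and passing to the limit the monotonicity formula for merely suitable weak solutions, since $u^{-p}$ is unbounded exactly on $\{u = 0\}$ and the cutoff manipulations underlying \eqref{stationary condition}, \eqref{localized energy inequality} must avoid uncontrolled boundary contributions from the rupture set (the strong $L^1$ convergence of $u_i^{-p}$ from Theorem \ref{thm 1.6} is essential here); and (ii) executing the parabolic Federer reduction cleanly, in particular the top-stratum estimate, which requires ruling out the accumulation of distinct rupture-times across space in a parabolic-Hausdorff-controlled way, rather than naively treating the rupture set as a Lipschitz time-graph (which would only give parabolic dimension $n + 1$).
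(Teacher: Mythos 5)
Your framework---a Giga--Kohn/Weiss-type monotonicity functional at rupture points, backward self-similar blow-up limits obtained via Theorem \ref{thm 1.6}, and a parabolic Federer dimension reduction---coincides in spirit with the paper's Sections \ref{section 5} and \ref{section stratification}. However, there is a genuine gap in your stratification step: you stratify by the dimension of the \emph{spatial} spine $V(u_\star)\subset\R^n$, and this alone does not control the \emph{parabolic} Hausdorff dimension. The dangerous configuration is a blow-up $u_\star$ that is \emph{time-independent} with spatial spine of dimension $n-1$, i.e. $u_\star(x,t)=c\,|x_n|^{\alpha}$. Such a profile is invariant under $n-1$ spatial translations \emph{and} under all time translations, so its space-time spine $\R^{n-1}\times\R$ has parabolic dimension $(n-1)+2=n+1$. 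If this blow-up occurred on a nonnegligible part of $\{u=0\}$, the rupture set would locally resemble a persistent spatial interface, a set of parabolic dimension $n+1$, and your claimed bound $\dim_{\mathcal{P}}(S^{n-1})\le n-1$ would fail. You flag this exact obstruction at the end of your proposal (``\dots which would only give parabolic dimension $n+1$''), but you offer no mechanism to remove it; the Federer reduction as you invoke it (``translating the base point along a spine direction increases the spine'') does not interact with the time direction and so cannot beat $n+1$.

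The paper closes this gap in two complementary ways. First, the stratification is indexed by the dimension of the \emph{space-time} spine (Definition \ref{def 6.8}, Theorem \ref{thm stratification}), following White's parabolic framework, which explicitly charges a $+2$ for temporal invariance. Second, and crucially, Lemma \ref{lem no ODE blow up} (Case 3) rules out the stationary-interface blow-up $\mathcal{M}(\Theta(\cdot;u_0))=\R^{n-1}\times\R$: if $u_0$ depended only on $x_n$, then $\{u_0=0\}$ would contain $\R^{n-1}\times\{0\}\times(-\infty,0]$, a set of parabolic dimension $n+1>n+\alpha$, contradicting the quantitative degeneracy estimate $\mathcal{P}^{n+\alpha}(\{u_0=0\})=0$ of Lemma \ref{lemma 4.3}, which is itself a consequence of the nondegeneracy \eqref{es4}. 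This Liouville-type exclusion is the additional input your proposal is missing. (Your alternative treatment of the spatially-constant ODE blow-up $u_\star=c(-t)^{1/(p+1)}$ is a side concern; the paper rules it out outright in Lemma \ref{lem no ODE blow up} Case 1 via a Duhamel representation showing $u_0<0$ for $t>0$, but that case is not the one threatening the $n+1$ bound.)
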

	A more precise result about the stratification of  this rupture set  and its time slices will be proved in Section \ref{section stratification}, see Theorem \ref{thm stratification} and Theorem \ref{thm stratificatoin for slice}.

	\section{Preliminary}\label{section 2}
	\setcounter{equation}{0}
	
	\subsection{Some notations}
	
	Let us first introduce some notations and definitions about the Hausdorff measure with respect to both the Euclidean and parabolic metrics. Throughout this paper, we will use upper case letters $X=(x,t), Y=(y,s), Z=(z,\tau)$,$\dots$ to denote points in space-time and lower case letters $x, y, z$,$\dots$ to denote points in $\R^n$. The parabolic cylinder and backward parabolic cylinder are defined as usual,
	\[Q_{r}(X):=B_{r}(x)\times(t-r^2,t+r^2)\quad \text{and} \quad Q_{r}^-(X):=B_{r}(x)\times(t-r^2,t].\]
	If the center is the origin $(0,0)$, it will be omitted.
	
	Recall that the parabolic distance is defined by
	\[\delta(X,Y):=\max\left\{|x-y|, |t-s|^{1/2}\right\}.\]
	For a set $A\subset\R^n$, $x_0\in\R^n$ and $\lambda>0$, we let $A_{x_0,\lambda}:={(A-x_0)}/{\lambda}.$ For $X,X_0\in\R^{n+1}$ and $E\subset\R^{n+1}$, we define the parabolic scaling $\mathcal{D}$ to be
	\[\mathcal{D}_{X_0,\lambda}(X):=\left(\frac{x-x_0}{\lambda}, \frac{t-t_0}{\lambda^2}\right) \quad \text{and} \quad E_{X_0,\lambda}:=\left\{\mathcal{D}_{X_0,\lambda}(X): X\in E\right\}.\]

	Next we recall the definitions of parabolic Hausdorff measure and parabolic Hausdorff dimension.
	\begin{defi}
		For any $s\geq 0$ and $E\subset\R^{n+1}$, the $s$-dimensional parabolic Hausdorff measure of $E$ is defined by
		\[
		\mathcal{P}^s(E):=\lim\limits_{\delta\to0}\mathcal{P}^s_\delta(E),\]
  where
  \[\mathcal{P}^s_\delta(E):=\inf\left\{\sum_{i}r_i^s: E\subset\cup_{i}Q_{r_i}(X_i),~X_i\in E,~r_i\leq\delta\right\}.
		\]
	\end{defi}
	\begin{defi}
		The parabolic Hausdorff dimension of a set $E\subset\R^{n+1}$ is the number
		\[
		dim_{\mathcal{P}}(E):=\sup\left\{s\geq0: \mathcal{P}^s(E)=+\infty\right\}=\inf\left\{s\geq0: \mathcal{P}^s(E)=0\right\}.
		\]
	\end{defi}
	To avoid abusing of notations, we use $\mathcal{H}^s$ to denote the usual $s$-dimensional Hausdorff measure in $\R^n$ and $dim_{\mathcal{H}}$ to denote the corresponding Hausdorff dimension.

\subsection{Local well-posedness}
	
Here we clarify the local existence of  classical solutions to the Cauchy-Dirichlet problem. We use the assumptions on $\varphi$ in Section \ref{sec introduction}, i.e. $\varphi\in C^{\alpha,\alpha/2}(\partial^p\Omega_T)$ and $\varphi>0$.
\begin{prop}
There exists $T_1>0$ and $u\in C^\infty(\Omega_{T_1})\cap C^{\alpha,\alpha/2}(\overline{\Omega_{T_1}})$ solving the Cauchy-Dirichlet problem for \eqref{eqn} in $\Omega_{T_1}$.
\end{prop}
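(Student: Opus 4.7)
The plan is to regularize the singular nonlinearity away from zero, invoke standard semilinear parabolic Schauder theory to obtain a classical solution of the regularized problem, and then use continuity in time to show that this solution coincides with a genuine solution of \eqref{eqn} on a short time interval. First, since $\varphi\in C^{\alpha,\alpha/2}(\partial^p\Omega_T)$ is strictly positive, we may fix a (local, if $\Omega$ is unbounded) lower bound $2\delta_0>0$ of $\varphi$ on the parabolic boundary, and extend $\varphi$ to a function in $C^{\alpha,\alpha/2}(\overline{\Omega_T})$, still denoted $\varphi$, which satisfies $\varphi\geq\delta_0$ in a parabolic neighborhood of $\partial^p\Omega_T$. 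Choose a smooth truncation $f_{\delta_0}\in C^\infty(\R)$ with $f_{\delta_0}(s)=-s^{-p}$ for $s\geq \delta_0/2$ and $f_{\delta_0}(s)\equiv -(\delta_0/4)^{-p}$ for $s\leq\delta_0/4$. Then $f_{\delta_0}$ is bounded and globally Lipschitz, and the regularized Cauchy--Dirichlet problem
\[
\partial_t v-\Delta v=f_{\delta_0}(v)\quad\text{in }\Omega_{T_1},\qquad v\lfloor_{\partial^p\Omega_{T_1}}=\varphi
\]
admits a unique classical solution $v\in C^{2+\alpha,1+\alpha/2}(\overline{\Omega_{T_1}})$ on some short interval $[0,T_1]$. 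This is standard: write $v=w+z$ where $w$ solves the linear heat equation with data $\varphi$ (yielding $w\in C^{\alpha,\alpha/2}(\overline{\Omega_{T_1}})$ by the classical parabolic theory), and obtain $z$ by a contraction mapping argument in $C^{\alpha,\alpha/2}$ using parabolic Schauder estimates together with the Lipschitz and boundedness of $f_{\delta_0}$.

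Second, I would show that $v\geq\delta_0$ on $\overline{\Omega_{T_1}}$ for $T_1$ sufficiently small. Since $v$ is H\"older continuous up to the parabolic boundary, with $v(\cdot,0)\geq 2\delta_0$ on the closure of $\Omega$ (up to restricting to a neighborhood) and $v\geq 2\delta_0$ on the lateral boundary, a direct modulus-of-continuity estimate using the parabolic H\"older norm forces $v\geq\delta_0$ throughout $\overline{\Omega_{T_1}}$ provided $T_1$ is small, depending on $\delta_0$ and $\|\varphi\|_{C^{\alpha,\alpha/2}}$. In this regime $f_{\delta_0}(v)=-v^{-p}$, so $v$ is a classical positive solution of the original equation \eqref{eqn} in $\Omega_{T_1}$. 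Interior smoothness $v\in C^\infty(\Omega_{T_1})$ then follows from the standard parabolic bootstrap, since on $\{v\geq\delta_0\}$ the nonlinearity is a smooth function of $v$.

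The main obstacle is that $\Omega$ is allowed to be unbounded, which blocks both compactness-based uniform lower bounds on $\varphi$ and direct use of global Schauder constants. I would address this by an exhaustion argument: solve on an increasing sequence of bounded smooth subdomains $\Omega^{(k)}\Subset\Omega$ with suitable boundary data interpolating $\varphi$ on $\partial\Omega^{(k)}\times[0,T_1]$, apply the above construction to obtain local solutions $v^{(k)}$ on a common interval $[0,T_1]$ (the smallness of $T_1$ depending only on local H\"older norms of $\varphi$ and local lower bounds), and pass to the limit. Uniqueness of the limit and of the original solution follows from a parabolic comparison argument: once two solutions are both bounded below away from zero, $s\mapsto -s^{-p}$ is Lipschitz between them, so the classical maximum principle applies to their difference and forces equality with the boundary data.
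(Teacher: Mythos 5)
Your proposal is correct and follows essentially the same strategy as the paper: regularize the singular nonlinearity below a threshold to make it Lipschitz, solve the regularized Cauchy--Dirichlet problem by standard parabolic theory, and then use the H\"older modulus in time at $t=0$ to show the solution stays above the threshold for a short interval $[0,T_1]$, so that the truncation is never active and the solution is a genuine solution of \eqref{eqn}.

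A few inessential differences are worth noting. The paper truncates by replacing $u^{-p}$ with $\varepsilon^{-p-1}u$ for $u\leq\varepsilon$, so $f_\varepsilon(0)=0$; this lets the authors invoke the strong maximum principle to record global positivity $u_\varepsilon>0$ in $\Omega\times(0,\infty)$, although that conclusion is not strictly needed once the H\"older-in-time argument shows $u_\varepsilon>\varepsilon$ on $\overline{\Omega_{T_1}}$. Your truncation caps the nonlinearity at a constant for small $s$, so $f_{\delta_0}(0)\neq 0$ and zero is not a subsolution; you correctly bypass this by going directly to the short-time lower bound $v\geq\delta_0$, which is all that is used. You spell out the contraction-mapping construction and add an exhaustion argument for unbounded $\Omega$, both of which the paper elides. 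Note, however, that the paper implicitly assumes a uniform positive lower bound on $\varphi$ (the proof begins ``Let $\varepsilon>0$ such that $\varphi\geq 2\varepsilon$''), and such a uniform bound is really needed here: if the local lower bounds on $\varphi$ degenerate at infinity, the local existence times $T_1^{(k)}$ on the exhaustion domains $\Omega^{(k)}$ could tend to zero, and your exhaustion would not yield a common interval. So your caveat about ``local lower bounds'' should be strengthened to a global one, matching the paper's implicit hypothesis.
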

\begin{proof}
Let $\varepsilon>0$ such that $\varphi\geq 2\varepsilon$. Set
	\[
	f_\varepsilon(u):=
	\begin{cases}
	u^{-p}, & \text{if} ~~ u>\varepsilon,\\
	\varepsilon^{-p-1}u, & \text{otherwise},
	\end{cases}
		\]
which is a Lipschitz function of $u$. By standard parabolic theory, there exists a solution $u_\varepsilon$ to the Cauchy-Dirichlet problem
	\[
	\left\{\begin{aligned}
		& \partial_t u_\varepsilon-\Delta u_\varepsilon=-f_\varepsilon(u_\varepsilon) \quad \text{in}~~\Omega_\infty:=\Omega\times(0,+\infty),\\
		& u_\varepsilon=\varphi \quad \text{on}~~\partial^p\Omega_\infty.
	\end{aligned}\right.
		\]
Because $f_\varepsilon(0)=0$ and $f_\varepsilon$ is a Lipschitz function, the strong maximum principle implies that $u_\varepsilon>0$ in $\Omega_\infty$.
Furthermore, $u_\varepsilon\in C^{\alpha,\alpha/2}(\overline{\Omega_\infty})$. Thus there exists a $T_1>0$ such that
$u_\varepsilon>\varepsilon$ in $\overline{\Omega_{T_1}}$. Then $u_\varepsilon$ solves \eqref{eqn} in $\Omega_{T_1}$. Its smoothness in $\Omega_{T_1}$ follows from standard parabolic regularity theory.
\end{proof}	
This local solution is strictly positive, so it is a classical solution. It can be continued further until it touches $0$, that is, until the first quenching moment.

	\section{H\"{o}lder estimate}\label{section 3}
	\setcounter{equation}{0}

	In this section, we  prove Theorem \ref{thm 1.1} and Theorem \ref{thm 1.3} by using the blow up method and the following Liouville theorem on two-valued, stationary caloric functions.

	\begin{thm}\label{thm Liouville}
		Let $u$ be a stationary two-valued caloric function in $\R^n\times (-\infty,0]$. If $u$ is globally $C^{\gamma,\gamma/2}$-continuous for some $\gamma\in(0,1)$, then $u$ must be a constant.
	\end{thm}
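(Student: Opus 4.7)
The strategy combines a dichotomy on whether $u$ admits a zero, a parabolic Poon-type frequency formula adapted to stationary two-valued caloric functions, and a classification of parabolic-homogeneous profiles. First, I would dispose of the case $u > 0$ everywhere on $\R^n \times (-\infty,0]$. In this regime, condition (iii) of Definition \ref{stationary two-valued caloric fct} reduces to the pointwise heat equation $\partial_t u - \Delta u = 0$, so $u$ is a globally smooth ancient caloric function. The growth bound $u(X) \leq |u(0)| + [u]_{C^{\gamma,\gamma/2}}\delta(X,0)^\gamma$ combined with interior gradient estimates on parabolic cylinders of radius $R$ gives $\sup_{Q_{R/2}}|\nabla u| \leq C R^{\gamma-1} \to 0$ as $R \to \infty$, hence $u$ is constant. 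So from now on I may assume $u(X_0) = 0$ at some $X_0$, and after translating we take $X_0 = 0$.

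Next, I would introduce a parabolic frequency modeled on Poon's formula for caloric functions,
\[
H(r) := \int_{\R^n} u(x,-r^2)^2\, G(x,r^2)\, dx, \qquad D(r) := \int_{-r^2}^{0}\!\!\int_{\R^n} |\nabla u|^2\, G(x,-t)\, dx\, dt,
\]
where $G$ denotes the backward heat kernel centered at the origin, and set $N(r) := r^2 D(r)/H(r)$. Using the stationary condition (iv) with dilation-type vector fields and the localized energy inequality (v) to differentiate $D$ and $H$ in $r$, one shows that $r \mapsto N(r)$ is non-decreasing on $(0,\infty)$. The two-valued structure enters crucially here: on the rupture set $\{u=0\}$ the pointwise heat equation fails, and the integrated identities (iv) and (v) substitute for the missing pointwise identities in the computation. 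On the other hand, the global H\"older bound and $u(0)=0$ together force $\limsup_{r\to\infty} N(r) \leq \gamma$ and $\liminf_{r\to 0^+} N(r) \geq \gamma$ (the former from $u(X) \leq C\delta(X,0)^\gamma$, the latter from the exact vanishing order at $0$). Combined with monotonicity, $N \equiv \gamma$ on $(0,\infty)$, and the equality case of the monotonicity calculation then forces $u$ to be parabolically $\gamma$-homogeneous about the origin, namely $u(\lambda x, \lambda^2 t) = \lambda^\gamma u(x,t)$ for every $\lambda > 0$.

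Finally, I would rule out nontrivial parabolically $\gamma$-homogeneous stationary two-valued caloric functions with $\gamma \in (0,1)$. On its positivity set such a function is classically caloric, nonnegative, and parabolically $\gamma$-homogeneous; writing $u(x,t) = (-t)^{\gamma/2}\psi(x/\sqrt{-t})$ on $\{t<0\}$ reduces $\psi$ to a nonnegative eigenfunction problem for the Ornstein--Uhlenbeck operator, whose nonnegative solutions correspond only to nonnegative integer parabolic degrees (caloric polynomials). Since $\gamma \in (0,1)$ is not an integer, this forces $u \equiv 0$ on its positivity set, hence $u \equiv 0$ -- which is a constant -- and the proof is complete. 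The main obstacle, as in the elliptic analog of Davila--Wang--Wei, is establishing monotonicity of $N(r)$ across the free boundary: the standard derivation uses the pointwise heat equation, and here it must be replaced by cutoff approximations exploiting (iv) and (v), with careful justification of integration by parts across the rupture set $\{u=0\}$.
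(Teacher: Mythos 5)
Your skeleton up to homogeneity is essentially the paper's: dispose of the case $\{u=0\}=\emptyset$ by the Liouville theorem for ancient caloric functions with sublinear growth, then introduce a Poon-type parabolic frequency whose monotonicity is obtained by plugging the dilation field $Y=xG$ into the stationarity condition (iv) and cutoffs times $\sqrt G$ into the energy inequality (v), and identify the frequency at a zero point with the H\"older exponent, forcing backward self-similarity of degree $\gamma$. (Two small corrections there: your $D(r)$ is a space–time integral, so $r^2D(r)/H(r)$ is not scale invariant — use the time-slice Dirichlet energy $s\int|\nabla u(\cdot,-s)|^2G\,dx$ as in the paper; and the lower bound $\liminf_{r\to0}N\geq$ the critical value does not follow from an ``exact vanishing order at $0$'', which is not known a priori, but from the same H\"older upper bound $H(s)\le Cs^\gamma$ together with monotonicity, arguing by contradiction at small radii, plus a unique continuation remark guaranteeing $H>0$.)

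The genuine gap is your final classification step. Writing $u=(-t)^{\gamma/2}\psi(x/\sqrt{-t})$ and treating $\psi$ as a nonnegative eigenfunction of the Ornstein--Uhlenbeck operator on all of $\R^n$ is not legitimate: a two-valued caloric function solves the heat equation only on its positivity set, so $\psi$ satisfies the eigenvalue equation only on $\{\psi>0\}$, with $\{\psi=0\}$ a free boundary, and the spectral fact you invoke (nonnegative entire eigenfunctions only at integer degrees, i.e.\ caloric polynomials) does not apply. Indeed, non-integer homogeneous profiles that are caloric on their positivity set do exist — e.g.\ the time-independent $u=|\mathrm{Re}\,(x_1+ix_2)^{1/2}|$ is nonnegative, globally $1/2$-H\"older, $1/2$-homogeneous, subcaloric and caloric where positive, so it satisfies (i)--(iii); what excludes it is precisely the stationarity condition (iv) (its stress tensor has a nonzero point force at the branch point), which your last step never uses. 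The paper avoids classifying homogeneous profiles: it proves $N\equiv\gamma/2$ at \emph{every} point of $\{u=0\}$, hence $u$ is self-similar about every zero; a White-type spine dichotomy (Lemma \ref{A.6}) then shows that either $u$ is time-independent, in which case one quotes the elliptic Liouville theorem for stationary two-valued harmonic functions from \cite{Davila-Wang-Wei}, or $\{u=0\}\subset\R^n\times\{0\}$, in which case $u$ is a positive caloric function for $t<0$ vanishing at the origin and the strong maximum principle gives $u\equiv0$. To repair your argument you must either follow such a route or genuinely exploit (iv) in the classification of $\gamma$-homogeneous two-valued profiles.
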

	This theorem is implicitly contained in \cite[Section 5]{Dancer-Wang-Zhang}. For completeness, a proof  will be given in Appendix \ref{appendix Liouville}.
	
	\subsection{Proof of Theorem \ref{thm 1.1}}
	In this subsection we prove Theorem \ref{thm 1.1}. In fact, we will prove a stronger result, which allows varying initial-boundary values.
	\begin{thm}\label{thm Holder estimate, general}
		Suppose $T_i$ is a sequence of positive constants tending to a positive limit $T$, $\varphi_i$ is a sequence of positive $\alpha$-H\"{o}lder continuous functions on $\partial^p\Omega_{T_i}$ with uniformly bounded $\alpha$-H\"{o}lder norm, and $u_i \in C^{\alpha,\alpha/2}(\overline{\Omega_{T_i}})$ is a sequence of positive  solutions to \eqref{eqn} in $\Omega_{T_i}$ with initial-boundary value $\varphi_i$. 
		Then
		\[\sup_i \|u_i\|_{C^{\alpha,\alpha/2}\left(\overline{\Omega_{T_i}}\right)}<+\infty.\]
	\end{thm}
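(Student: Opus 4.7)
The plan is to argue by contradiction via a blow--up modelled on the elliptic argument of Davila--Wang--Wei \cite{Davila-Wang-Wei} and on the parabolic singular--perturbation analysis of Dancer--Wang--Zhang \cite{Dancer-Wang-Zhang}, and then to apply the Liouville theorem (Theorem \ref{thm Liouville}). Assume toward contradiction that $M_i := \|u_i\|_{C^{\alpha,\alpha/2}(\overline{\Omega_{T_i}})} \to +\infty$. Since $u_i \leq \sup \varphi_i$ by the maximum principle and $\|\varphi_i\|_\infty$ is uniformly bounded, the divergence of $M_i$ must come from the H\"{o}lder seminorm, so we can pick $X_i, Y_i \in \overline{\Omega_{T_i}}$ with $\lambda_i := \delta(X_i, Y_i) \to 0$ such that $|u_i(X_i) - u_i(Y_i)| \geq \tfrac{1}{2} M_i \lambda_i^{\alpha}$. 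A standard doubling selection allows us to arrange additionally that on the parabolic scale $\lambda_i$ no bigger H\"{o}lder quotient is realized and that $M_i \lambda_i^{\alpha} \to +\infty$.

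Define the rescaling $\tilde{u}_i(x,t) := (M_i \lambda_i^{\alpha})^{-1} u_i(x_i + \lambda_i x, t_i + \lambda_i^2 t)$. Using $\alpha = 2/(p+1)$ one checks that $\tilde{u}_i$ satisfies
\[
\partial_t \tilde{u}_i - \Delta \tilde{u}_i = -M_i^{-(p+1)} \tilde{u}_i^{-p}
\]
on the dilated domain, with $[\tilde{u}_i]_{C^{\alpha,\alpha/2}} \leq 1$ and $|\tilde{u}_i(0) - \tilde{u}_i(Z_i)| \geq 1/2$ for some $Z_i$ with $\delta(0,Z_i) = 1$. Depending on the ratio $d_i/\lambda_i$, where $d_i$ is the parabolic distance from $X_i$ to $\partial^p\Omega_{T_i}$, the dilated domain either exhausts $\R^n \times (-\infty, 0]$, or converges to a half--space; in the half--space case, the rescaled boundary values have H\"{o}lder seminorm $\leq M_i^{-1} \sup_i \|\varphi_i\|_{C^{\alpha,\alpha/2}} \to 0$ and size $\leq (M_i \lambda_i^{\alpha})^{-1}\sup_i \|\varphi_i\|_\infty \to 0$, so they converge locally uniformly to $0$. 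By Arzel\`{a}--Ascoli, after a subsequence $\tilde{u}_i \to u_\infty$ locally uniformly to a nonnegative, globally $C^{\alpha,\alpha/2}$ function with seminorm $\leq 1$ and $|u_\infty(0) - u_\infty(Z_\infty)| \geq 1/2$; the half--space case can be reduced to the full case by an odd--even reflection across the flat boundary, which preserves the two--valued caloric structure once the boundary trace is $0$.

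The technical heart of the argument is to show that $u_\infty$ is a stationary two--valued caloric function in the sense of Definition \ref{stationary two-valued caloric fct}. For each $\tilde{u}_i$, the smooth equation yields analogues of the stationary identity \eqref{stationary condition} and the energy identity \eqref{localized energy inequality}, in which every nonlinear term carries the small factor $M_i^{-(p+1)}$ together with a power of $\tilde{u}_i$ that is $1-p$ or $-p$. A Caccioppoli--type bound, obtained by testing the equation against $\tilde{u}_i\eta^2$, gives uniform $L^2_{\mathrm{loc}}$ control on $\nabla\tilde{u}_i$ and $\partial_t\tilde{u}_i$, the dangerous term $M_i^{-(p+1)}\tilde{u}_i^{1-p}$ being absorbed using the $L^\infty_{\mathrm{loc}}$ bound on $\tilde{u}_i$ (implicit in (ii) of Definition \ref{stationary two-valued caloric fct}) and the smallness of $M_i^{-(p+1)}$. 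From these bounds one extracts weak $L^2_{\mathrm{loc}}$ convergence of $\partial_t \tilde{u}_i$ and, by a standard energy--monotonicity argument combined with smooth convergence inside $\{u_\infty>0\}$, strong $L^2_{\mathrm{loc}}$ convergence of $\nabla \tilde{u}_i$. Passing to the limit in the rescaled stationary identity and energy inequality then produces conditions (iv) and (v); conditions (ii) and (iii) follow because $-M_i^{-(p+1)}\tilde{u}_i^{-p} \to 0$ distributionally (smooth convergence on $\{u_\infty>0\}$ and nonpositivity everywhere).

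Theorem \ref{thm Liouville} now forces $u_\infty$ to be constant, contradicting $|u_\infty(0) - u_\infty(Z_\infty)| \geq 1/2$. The main obstacle in executing this plan is precisely the strong $L^2_{\mathrm{loc}}$ convergence of the gradients and the justification of the passage to the limit in the stationary identity: because $\tilde{u}_i$ may touch zero and $\tilde{u}_i^{-p}$ blows up there, the stationary and energy identities built into Definition \ref{def 1.5} (rather than the pointwise PDE) are the essential tool, and their correct rescaled form plus the gain $M_i^{-(p+1)}$ is what ultimately tames the singular nonlinearity. A secondary, more bookkeeping difficulty is the boundary case, which is handled as above by exploiting the uniform H\"{o}lder control on $\varphi_i$ to arrange a vanishing rescaled boundary trace.
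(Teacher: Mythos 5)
Your overall blueprint — blow up at a near-extremal Hölder pair, normalize, pass to a limit, and invoke a Liouville theorem — matches the paper, and your early observation that the comparison principle forces a uniform $L^\infty$ bound on $u_i$ (hence $\lambda_i\to 0$ automatically) is a clean simplification that renders the paper's ``$\lambda_i\to\lambda_\infty>0$'' and ``$\lambda_i\to\infty$'' cases vacuous. However, the argument has two genuine gaps, both traceable to one false assertion.

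First, you claim that a doubling selection can arrange $M_i\lambda_i^\alpha\to+\infty$. This cannot hold: since $|u_i(X_i)-u_i(Y_i)|\ge\tfrac12 M_i\lambda_i^\alpha$ and $0\le u_i\le\sup_i\|\varphi_i\|_\infty=:K$, one has $M_i\lambda_i^\alpha\le 4K$, i.e.\ this quantity is \emph{bounded}. The only robust conclusion from the selection is that $\lambda_i\to 0$; whether $M_i\lambda_i^\alpha$ tends to zero, a positive constant, or oscillates is undetermined. Consequently the rescaled function $\widetilde u_i$ need not be locally uniformly bounded: $\widetilde u_i(0)=u_i(X_i)/(M_i\lambda_i^\alpha)$ can go to $+\infty$ (take $u_i(X_i)$ bounded away from zero while $|u_i(X_i)-u_i(Y_i)|\to 0$). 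In that regime your appeal to Arzel\`a--Ascoli on $\widetilde u_i$ breaks down: one must instead consider $\widetilde u_i-A_i$ with $A_i:=\widetilde u_i(0)$, show that $\varepsilon_i\widetilde u_i^{-p}\to 0$ because $\widetilde u_i\ge A_i/2\to\infty$, and conclude via the generalized Liouville theorem for the heat equation (not Theorem \ref{thm Liouville}, since the limit may change sign). This is exactly the paper's Subcase~1.1, which your proposal omits.

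Second, for the boundary (half-space) case you use the same false limit to argue that the rescaled boundary data tends to $0$, and then invoke odd reflection. What actually follows is only that the Hölder seminorm of $\widetilde\varphi_i$ tends to $0$, so $\widetilde\varphi_i\to A$ locally uniformly for some constant $A\in[0,\infty]$, and in general $A>0$ (indeed $\sup\widetilde\varphi_i\ge 1$). Odd reflection of a nonnegative two-valued caloric function across a level set $\{u=A\}$ with $A>0$ does not produce a two-valued caloric function (it changes sign, and $u$ is caloric only where $u>0$), so this reduction fails. The paper instead proves a separate half-space Liouville theorem (Theorem \ref{thm half space}) for boundary value a positive constant, together with the comparison estimate $\widetilde u_\infty\le A$ (Lemma \ref{lem 4.02}); some input of this type is unavoidable and is missing from your proposal. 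The paper also treats the domains $\R^n\times[T_0,+\infty)$ and $H_\infty\times[T_0,+\infty)$ arising when $t_i$ is comparable to $\lambda_i^2$, which your dichotomy by $d_i/\lambda_i$ does not capture. Your analysis of conditions (ii)--(v) in Definition \ref{stationary two-valued caloric fct} via the rescaled stationary and energy identities, and of strong $L^2$ convergence of gradients, is in the right spirit and matches Lemma \ref{lem 4.01}, so the core technical passage is sound once the missing cases above are filled in.
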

	In the setting of Theorem \ref{thm 1.1}, by our definition of the quenching time $T$, for any $t\in(0,T)$, $u\in C^{\alpha,\alpha/2}(\overline{\Omega_t})$. This theorem then gives
	\[\limsup_{t\to T}\|u\|_{C^{\alpha,\alpha/2}(\overline{\Omega_t})}<+\infty,\]
	which means $u\in C^{\alpha,\alpha/2}\left(\overline{\Omega_T}\right)$.
	
	To prove Theorem \ref{thm Holder estimate, general}, let us assume by the contrary that 
	\[\sup_{X\neq Y\in\Omega_{T_i}} \frac{|u_i(X)-u_i(Y) |}{\delta(X,Y)^{\alpha}}\to +\infty \quad \text{as} ~~ i\to\infty.\]
Then	 there exist $X_i=(x_i,t_i), Y_i=(y_i,s_i)\in\overline{\Omega_{T_i}}$ such that
	\begin{equation}\label{31}
		L_i:= \frac{|u_i(X_i)-u_i(Y_i) |}{\delta(X_i,Y_i)^{\alpha}}\geq \frac{1}{2} \sup_{X\neq Y\in\Omega_{T_i}} \frac{|u_i(X)-u_i(Y) |}{\delta(X,Y)^{\alpha}},
	\end{equation}which tends to $+\infty$ as $i\to\infty$.
	
	Let $\lambda_i=\delta(X_i,Y_i)$ and $Z_i=(\frac{y_i-x_i}{\lambda_i},\frac{s_i-t_i}{\lambda_i^2})$. Define the blow-up sequence 
	\[
	\widetilde{u}_i(X):=L_i^{-1}\lambda_i^{-\alpha }u_i(x_i+\lambda_ix,t_i+\lambda_i^2 t),
	\]
	for $X=(x,t)\in \Omega^i:=\mathcal{D}_{X_i,\lambda_i}(\Omega_{T_i})$.

	A direct calculation shows that $\widetilde{u}_i$ satisfies
	\begin{equation}\label{e1}
		\partial_t\widetilde{u}_i-\Delta \widetilde{u}_i=-\varepsilon_i\widetilde{u}_i^{-p}~\text{in}~\Omega^i,
	\end{equation}
	where $\varepsilon_i=L_i^{-(p+1)}\to 0$. Moreover, a scaling of \eqref{31} yields
	\begin{equation}\label{p2}
		\begin{split}
			|\widetilde{u}_i(0)-\widetilde{u}_i(Z_i)|=1 \quad  \text{and} \quad 
			\sup_{\overline{\Omega^i}} \frac{|\widetilde{u}_i(X)-\widetilde{u}_i(Y)|}{\delta(X,Y)^{\alpha}}\leq 2.
		\end{split}
	\end{equation}
	Because $\delta(0,Z_i)=1$, we may assume $Z_i\to Z_\infty$. Clearly, we still have $\delta(0,Z_\infty)=1$.
	
	Since $u_i=\varphi_i$ on $\partial^p\Omega_{T_i}$, $\widetilde{u}_i=\widetilde{\varphi}_i$ on $\partial^p\Omega^i$, where
	\[\widetilde{\varphi}_i(x,t):=L_i^{-1}\lambda_i^{-\alpha }\varphi_i(x_i+\lambda_ix,t_i+\lambda_i^2 t).\]

	{\bf Case 1.} $\lambda_i\to0$.
	
	As $i\to\infty$, $\Omega^i$ converges to a limit domain, $\Omega^\infty$, which could be
	$\R^{n+1}$, $\R^n\times (-\infty, T_0]$, $H_\infty\times \R$, $H_\infty\times (-\infty, T_0]$, $\R^n\times[T_0,+\infty)$ or $H_\infty\times(T_0,+\infty)$,
	where $H_\infty$ is an half space in $\R^n$ and $T_0\in \R$ is a constant that may be different from one possibility to another.
	Next we divide our analysis further into  two subcases, depending on the behavior of $A_i:=\widetilde{u}_i(0)$.
	
	{\bf Subcase 1.1.} $A_i\to +\infty$.
	
	Thanks to \eqref{p2}, perhaps after passing to a subsequence, we may assume $\widetilde{u}_i-A_i\to \widetilde{u}_\infty$ uniformly in any compact subset of $\Omega^\infty$. Then sending $i\to\infty$ in \eqref{p2} leads to
	\begin{equation}\label{normalization 1}
		1=|\widetilde{u}_{\infty}(0)-\widetilde{u}_{\infty}(Z_{\infty})|\geq \frac{1}{2}\sup_{\Omega^\infty}\frac{|\widetilde{u}_{\infty}(X)-\widetilde{u}_{\infty}(Y) |}{\delta(X,Y)^\alpha},
	\end{equation}
	which implies $\widetilde{u}_\infty$ is globally H\"{o}lder continuous in $\Omega^\infty$.
	
	Since $\varphi_i\in C^{\alpha,\alpha/2}(\partial^p\Omega_{T_i})$, for any $X=(x,t), Y=(y,s)\in\partial^p{\Omega^i}$,
	\[
	\begin{split}
		|\widetilde{\varphi}_i(X)-\widetilde{\varphi}_i(Y)|&\leq L_i^{-1}\lambda_i^{-\alpha}\left|\varphi_i(X_i+\mathcal{D}_{\lambda_i^{-1}}(X))-\varphi_i(X_i+\mathcal{D}_{\lambda_i^{-1}}(Y))\right|\\
        &\leq 2L_i^{-1}\delta(X,Y)^\alpha.
	\end{split}
	\]
	By the uniform convergence of $u_i$, we see, when $\partial^p\Omega^\infty$ is not empty, there exists a constant $A$ such that $\widetilde{u}_\infty\equiv A$ on $\partial_pQ^\infty$.

	By \eqref{p2}, for any $R>0$, if $i$ is large enough, then
	\[
	\begin{split}
		\inf_{Q_R\cap \Omega^i}\widetilde{u}_i\geq A_i - R^{\alpha}\geq \frac{A_i}{2}.
	\end{split}
	\]
	Plugging this lower bound into \eqref{e1}, we get
	\[
	0 \geq (\partial_t-\Delta)(\widetilde{u}_i-A_i) \geq -2^p \varepsilon_i A_i^{-p} \to 0 \quad \text{as $i\to\infty$}.
	\]
	Hence for any $q>1$, by standard $W^{2,1}_q$ estimates for heat equation, $(\widetilde{u}_i-A_i)$ are uniformly bounded in $W^{2,1}_{q,loc}(\Omega^\infty)$ . Then by Sobolev embedding theorem, for any $\gamma\in(0,1)$,  $(\widetilde{u}_i-A_i)$ are uniformly bounded in $C^{1+\gamma,(1+\gamma)/2}_{loc}(\Omega^\infty)$. Letting $i\to\infty$ in \eqref{e1}, we get
	\[
	\partial_t \widetilde{u}_{\infty}-\Delta \widetilde{u}_{\infty}=0~~\text{in}~~\Omega^\infty.
	\]
	Since $\widetilde{u}_\infty$ is globally H\"{o}lder continuous, if $\Omega^\infty=\R^{n+1}$ or $\R^n\times(-\infty,T_0)$, by the generalized Liouville theorem for heat equation, $\widetilde{u}_\infty$ is a constant function. This is a contradiction with the first equality in  \eqref{normalization 1}.
	
	If $\Omega^\infty=H_\infty\times (-\infty,T_0]$, where $H_\infty$ is an half space,  we have shown that $\widetilde{u}_\infty\equiv A$ on $\partial H_\infty\times (-\infty,T_0)$. Extend $\widetilde{u}_\infty$ to  $\R^n\times (-\infty,T_0]$ by taking the odd extension of $\widetilde{u}_\infty-A$ outside $\Omega_\infty$. Then $\widetilde{u}_\infty$ is still a globally H\"{o}lder continuous solution to the heat equation in  $\R^n\times(-\infty,T_0]$, and we arrive at a contradiction as before.
	The case when $\Omega^\infty=H_\infty\times\R$ can be proved in the same way.
	
	If $\Omega^\infty=\R^n\times[T_0,+\infty)$, because $\widetilde{u}_\infty$ is globally H\"{o}lder continuous and $\widetilde{u}_\infty\equiv A$ on $\R^n\times\{T_0\}$, the uniqueness of solutions to the Cauchy problem for heat equation implies that $\widetilde{u}_\infty\equiv A$ in $\Omega^\infty$. If $\Omega^\infty=H_\infty\times[T_0,+\infty)$, we get the same conclusion by using the uniqueness of solutions to the Cauchy-Dirichlet problem for heat equation, because in this case $\widetilde{u}_\infty\equiv A$ on $\partial^p\Omega^\infty$. Both conclusions lead to a contradiction as before.

	{\bf Subcase 1.2.} $A_i\in [0,+\infty)$.

	Since $(\widetilde{u}_i)$ are locally uniformly bounded and uniformly H\"{o}lder continuous, after passing to a subsequence, we may assume $\widetilde{u}_i\to \widetilde{u}_\infty$ locally uniformly in $\Omega^\infty$.
	
	By \eqref{p2}, for large $i$, we have
	\begin{equation}\label{p3}
		1\leq \widetilde{u}_i(0)+\widetilde{u}_i(Z_i).
	\end{equation}
	Letting $i\to\infty$ gives us
	\[
	1\leq \widetilde{u}_\infty(0)+\widetilde{u}_\infty(Z_\infty).
	\]
	Thus $D:=\{\widetilde{u}_\infty>0\}$ is non-empty.  As in Case 1, we have
	\begin{equation}\label{caloric in positivity set}
		\partial_t \widetilde{u}_\infty-\Delta \widetilde{u}_\infty=0~~\text{in}~~D.
	\end{equation}
	If $\{\widetilde{u}_\infty=0\}=\emptyset$, then $\widetilde{u}_\infty$ is caloric in the whole $\Omega^\infty$.
	As in Case 1, we get a contradiction.

	Now we assume $\{\widetilde{u}_\infty=0\}\neq\emptyset$. We want to show that 
 \begin{lem}\label{lem 4.03}
 $\widetilde{u}_\infty$ is a two-valued stationary caloric function in $\Omega^\infty$.
 \end{lem}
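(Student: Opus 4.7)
The plan is to verify conditions (i)--(v) in Definition \ref{stationary two-valued caloric fct} for $\widetilde{u}_\infty$ by passing to the limit in the corresponding quantities for $\widetilde{u}_i$. First I would establish uniform local bounds: $\nabla \widetilde{u}_i$ and $\partial_t \widetilde{u}_i$ in $L^2_{loc}(\Omega^\infty)$ (via a Caccioppoli-type argument from the localized energy identity for $\widetilde{u}_i$ combined with the uniform $L^\infty_{loc}$ bound from \eqref{p2}), and the nonlinear source $\varepsilon_i \widetilde{u}_i^{-p}$ in $L^1_{loc}(\Omega^\infty)$, the latter by testing \eqref{e1} against a cutoff $\eta^2$:
\[\varepsilon_i \int \widetilde{u}_i^{-p} \eta^2 = -\int \partial_t \widetilde{u}_i \, \eta^2 - 2\int \eta \nabla \widetilde{u}_i \cdot \nabla \eta.\]
After passing to a subsequence, one has $\widetilde{u}_i \to \widetilde{u}_\infty$ uniformly on compact sets, $\nabla \widetilde{u}_i \rightharpoonup \nabla \widetilde{u}_\infty$ and $\partial_t \widetilde{u}_i \rightharpoonup \partial_t \widetilde{u}_\infty$ weakly in $L^2_{loc}$, and $\varepsilon_i \widetilde{u}_i^{-p} \rightharpoonup \mu$ weakly as Radon measures for some nonnegative $\mu$. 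Condition (i) is then immediate, and (ii) follows by taking the distributional limit of \eqref{e1}, yielding $\partial_t \widetilde{u}_\infty - \Delta \widetilde{u}_\infty = -\mu \leq 0$.

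The key auxiliary step is to show $\varepsilon_i \widetilde{u}_i^{1-p} \to 0$ in $L^1_{loc}$. For any compact $K$ and $\delta > 0$, split $K = K_1 \cup K_2$ with $K_1 = K \cap \{\widetilde{u}_\infty > \delta\}$ and $K_2 = K \cap \{\widetilde{u}_\infty \leq \delta\}$. On $K_1$, uniform convergence gives $\widetilde{u}_i \geq \delta/2$ for large $i$, so $\varepsilon_i \widetilde{u}_i^{1-p} \leq \varepsilon_i (\delta/2)^{1-p} \to 0$; on $K_2$, $\widetilde{u}_i \leq 2\delta$, hence $\varepsilon_i \widetilde{u}_i^{1-p} = \widetilde{u}_i \cdot \varepsilon_i \widetilde{u}_i^{-p} \leq 2\delta \cdot \varepsilon_i \widetilde{u}_i^{-p}$, and the $L^1$ bound on the source yields an $O(\delta)$ estimate. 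Letting $\delta \to 0$ closes the claim. Next, test \eqref{e1} against $\widetilde{u}_i \eta^2$ to obtain
\[\int \partial_t \widetilde{u}_i \, \widetilde{u}_i \eta^2 + |\nabla \widetilde{u}_i|^2 \eta^2 + 2 \eta \widetilde{u}_i \nabla \widetilde{u}_i \cdot \nabla \eta = -\varepsilon_i \int \widetilde{u}_i^{1-p} \eta^2,\]
and pass to the limit: weak-strong duality handles the first and third terms on the left, and the right-hand side vanishes. This produces simultaneously condition (iii) for $\widetilde{u}_\infty$ and the convergence $\int |\nabla \widetilde{u}_i|^2 \eta^2 \to \int |\nabla \widetilde{u}_\infty|^2 \eta^2$, which together with weak convergence upgrades to strong $L^2_{loc}$ convergence of $\nabla \widetilde{u}_i$.

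With strong convergence of $\nabla \widetilde{u}_i$ in hand, condition (iv) follows by passing to the limit in the stationary identity \eqref{stationary condition} for $\widetilde{u}_i$ (multiplied by $2$): the quadratic gradient terms converge by strong convergence, the cross term $\partial_t \widetilde{u}_i \cdot \nabla \widetilde{u}_i$ by weak-strong duality, and the $\varepsilon_i \widetilde{u}_i^{1-p}$ piece vanishes. For (v), write the localized energy identity for $\widetilde{u}_i$ (doubled) in distributional form against a nonnegative $\psi \in C_c^\infty(\R)$; strong $L^2_{loc}$ convergence of $\nabla \widetilde{u}_i$ handles the $|\nabla u|^2$ terms, and weak lower semi-continuity of the $L^2$ norm converts the $-2 \int |\partial_t \widetilde{u}_i|^2 \eta^2$ equality term into the desired $\leq -2 \int |\partial_t \widetilde{u}_\infty|^2 \eta^2$ inequality. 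The main obstacle is the strong $L^2$ convergence of $\nabla \widetilde{u}_i$; its proof hinges on the vanishing of $\varepsilon_i \widetilde{u}_i^{1-p}$, whose delicacy is that $\widetilde{u}_i^{1-p}$ can blow up arbitrarily near the rupture set $\{\widetilde{u}_\infty = 0\}$ and the $L^1$ bound on $\varepsilon_i \widetilde{u}_i^{-p}$ alone does not suffice---one genuinely needs the additional factor $\widetilde{u}_i \to 0$ there coming from uniform convergence to close the estimate.
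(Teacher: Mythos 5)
Your preliminary steps are correct and in part genuinely different from the paper's: the uniform local bounds, the $L^1_{loc}$ bound on $\varepsilon_i\widetilde u_i^{-p}$ from testing \eqref{e1} with $\eta^2$, and especially the direct splitting argument over $\{\widetilde u_\infty>\delta\}$ and $\{\widetilde u_\infty\le\delta\}$ showing $\varepsilon_i\widetilde u_i^{1-p}\to 0$ in $L^1_{loc}$ all work (the paper instead obtains this vanishing only as a byproduct of its limit identity in Lemma \ref{lem 4.01}), and deriving (ii) from the weak-$*$ limit of the measures $\varepsilon_i\widetilde u_i^{-p}$ is a fine alternative to the paper's Kato-inequality argument. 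The gap is in the pivotal step. Passing to the limit in the identity obtained by testing with $\widetilde u_i\eta^2$ gives only
\[
\lim_{i\to\infty}\int|\nabla\widetilde u_i|^2\eta^2=-\int\partial_t\widetilde u_\infty\,\widetilde u_\infty\,\eta^2-2\int\eta\,\widetilde u_\infty\,\nabla\widetilde u_\infty\cdot\nabla\eta,
\]
and this single equation cannot ``produce simultaneously'' condition (iii) and the norm convergence $\int|\nabla\widetilde u_i|^2\eta^2\to\int|\nabla\widetilde u_\infty|^2\eta^2$: identifying the right-hand side with $\int|\nabla\widetilde u_\infty|^2\eta^2$ \emph{is} condition (iii), so the argument as written is circular. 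Weak lower semicontinuity yields only the one-sided inequality
\[
\int|\nabla\widetilde u_\infty|^2\eta^2+\int\partial_t\widetilde u_\infty\,\widetilde u_\infty\,\eta^2+2\int\eta\,\widetilde u_\infty\,\nabla\widetilde u_\infty\cdot\nabla\eta\le 0.
\]

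What is missing is an independent proof of the reverse inequality, i.e.\ of (iii) for the limit. This cannot be asserted from continuity alone: $\Delta\widetilde u_\infty-\partial_t\widetilde u_\infty$ is merely a nonnegative measure supported on $\{\widetilde u_\infty=0\}$ and $\widetilde u_\infty$ is only $W^{1,2}$, so the formal computation $\int\widetilde u_\infty\eta^2\,d(\Delta\widetilde u_\infty-\partial_t\widetilde u_\infty)=0$ needs an approximation to justify the integration by parts. The paper supplies exactly this inside the proof of Lemma \ref{lem 4.01}: using that $\widetilde u_\infty$ is caloric in $\{\widetilde u_\infty>0\}$, it chooses (via Sard's theorem) regular level sets $\{\widetilde u_\infty=\tau\}$, integrates by parts on $\{\widetilde u_\infty>\tau\}$, and shows the boundary term is of order $\tau\bigl(\int|\nabla\widetilde u_\infty|^2+|\partial_t\widetilde u_\infty|^2\bigr)^{1/2}\to 0$ as $\tau\to 0$; an equivalent fix is to test with $(\widetilde u_\infty-\tau)^+\eta^2$ and let $\tau\to 0^+$. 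Once this identity is established, your remaining chain---strong $L^2_{loc}$ convergence of $\nabla\widetilde u_i$, then (iv) by passing to the limit in the stationary identity and (v) in the time-distributional energy inequality with lower semicontinuity for the $|\partial_t\widetilde u_i|^2$ term---goes through as written; without it, (iii), the strong gradient convergence, and consequently (iv) and (v) remain unproven.
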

To prove this lemma, we need the following convergence results.
	\begin{lem}\label{lem 4.01}
		There hold
		\begin{equation}\label{G1}
			\left\{\begin{aligned}
				& \partial_t \widetilde{u}_i\rightharpoonup\partial_t\widetilde{u}_{\infty}  \quad \text{weakly in}~~L^2_{loc}(\Omega^\infty),\\
				&\nabla \widetilde{u}_i\to\nabla \widetilde{u}_{\infty} \quad \text{strongly in}~~L^2_{loc}(\Omega^\infty),\\
				&\varepsilon_i\widetilde{u}_i^{1-p}\to0 \quad \text{in}~~L^1_{loc}(\Omega^\infty).
			\end{aligned}\right.
		\end{equation}
	\end{lem}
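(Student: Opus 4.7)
The plan is to derive the three convergences via a Brezis--Lions type argument: extract weak $L^2$-limits from uniform energy bounds, then use the limiting equation for $\widetilde u_\infty$ to simultaneously upgrade the weak gradient convergence to strong and to force the vanishing of $\varepsilon_i\widetilde u_i^{1-p}$. The starting point is to test \eqref{e1} against $\widetilde u_i\eta^2$ with $\eta\in C_0^\infty(\Omega^\infty)$; integration by parts gives
\[
\int |\nabla\widetilde u_i|^2\eta^2 + \int \varepsilon_i\widetilde u_i^{1-p}\eta^2 = \int \widetilde u_i^2\eta\,\partial_t\eta - 2\int \widetilde u_i\eta\,\nabla\widetilde u_i\cdot\nabla\eta.
\]
The $L^\infty$-bound on $\widetilde u_i$ from \eqref{p2}, nonnegativity of $\varepsilon_i\widetilde u_i^{1-p}$, and Cauchy--Schwarz on the cross term yield uniform local bounds on $\nabla\widetilde u_i$ in $L^2$ and on $\varepsilon_i\widetilde u_i^{1-p}$ in $L^1$. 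A parallel calculation testing \eqref{e1} against $\partial_t\widetilde u_i\eta^2$ (the localized energy identity for these smooth positive solutions, which is exactly identity \eqref{localized energy inequality} with equality) produces a uniform local $L^2$-bound on $\partial_t\widetilde u_i$. Along a subsequence this gives $\partial_t\widetilde u_i\rightharpoonup\partial_t\widetilde u_\infty$ and $\nabla\widetilde u_i\rightharpoonup\nabla\widetilde u_\infty$ weakly in $L^2_{loc}(\Omega^\infty)$, with the limits identified via the uniform convergence $\widetilde u_i\to\widetilde u_\infty$; this proves the first claimed convergence.

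Next I identify the equation for $\widetilde u_\infty$. Since $\partial_t\widetilde u_i-\Delta\widetilde u_i=-\varepsilon_i\widetilde u_i^{-p}\le 0$, the distributional limit gives $\partial_t\widetilde u_\infty-\Delta\widetilde u_\infty=-\mu$ for a nonnegative Radon measure $\mu$. On the open set $\{\widetilde u_\infty>0\}$, the uniform convergence forces $\widetilde u_i\ge c>0$ on compacta, so $\varepsilon_i\widetilde u_i^{-p}\to 0$ uniformly and standard parabolic regularity shows that $\widetilde u_\infty$ is caloric there. Hence $\mu$ is supported in $\{\widetilde u_\infty=0\}$. Because $\widetilde u_\infty$ is $C^{\alpha,\alpha/2}$ (inheriting the bound in \eqref{p2}) and lies in $H^1_{loc}$ with $\partial_t\widetilde u_\infty\in L^2_{loc}$, the function $\widetilde u_\infty\eta^2$ is an admissible test function, and $\int\widetilde u_\infty\eta^2\,d\mu=0$ since $\widetilde u_\infty$ vanishes continuously on $\mathrm{supp}\,\mu$. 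Integration by parts then yields
\[
\int |\nabla\widetilde u_\infty|^2\eta^2 = \int \widetilde u_\infty^2\eta\,\partial_t\eta - 2\int \widetilde u_\infty\eta\,\nabla\widetilde u_\infty\cdot\nabla\eta.
\]

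Passing to the limit in the testing identity of Step~1 (using uniform convergence of $\widetilde u_i$ and weak $L^2_{loc}$ convergence of $\nabla\widetilde u_i$), the right-hand side converges to the right-hand side of the identity for $\widetilde u_\infty$, so
\[
\lim_{i\to\infty}\Bigl(\int |\nabla\widetilde u_i|^2\eta^2 + \int \varepsilon_i\widetilde u_i^{1-p}\eta^2\Bigr) = \int |\nabla\widetilde u_\infty|^2\eta^2.
\]
Both summands on the left are non-negative and weak lower semi-continuity gives $\liminf\int|\nabla\widetilde u_i|^2\eta^2\ge\int|\nabla\widetilde u_\infty|^2\eta^2$; these two facts force the limits individually, namely $\int|\nabla\widetilde u_i|^2\eta^2\to\int|\nabla\widetilde u_\infty|^2\eta^2$ and $\int\varepsilon_i\widetilde u_i^{1-p}\eta^2\to 0$. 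The convergence of $L^2$-norms together with weak $L^2$-convergence upgrades to strong $L^2_{loc}$-convergence of $\nabla\widetilde u_i$, and the second vanishing is precisely the third claim.

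The main technical subtlety lies in the second step: correctly identifying the defect measure $\mu$ as supported on $\{\widetilde u_\infty=0\}$ and justifying that $\widetilde u_\infty\eta^2$ is an admissible test function in the distributional equation $\partial_t\widetilde u_\infty-\Delta\widetilde u_\infty=-\mu$. Both are handled by the Hölder continuity of $\widetilde u_\infty$ inherited from \eqref{p2}, which makes the pairing with the Radon measure well-defined and ensures $\widetilde u_\infty\equiv 0$ on $\mathrm{supp}\,\mu$, so that the boundary/defect contribution disappears from the energy identity.
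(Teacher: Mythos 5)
Your proposal is correct, and it reaches the three convergences by the same overall skeleton as the paper (test \eqref{e1} with $\widetilde u_i\eta^2$ and $\partial_t\widetilde u_i\eta^2$ for uniform bounds, extract weak limits, pass to the limit in the first identity, and use nonnegativity plus weak lower semicontinuity to split the defect), but the key intermediate step is handled by a genuinely different device. The crux in both arguments is the identity
\[
\int |\nabla\widetilde u_\infty|^2\eta^2+2\int\eta\,\widetilde u_\infty\,\nabla\widetilde u_\infty\cdot\nabla\eta+\int\partial_t\widetilde u_\infty\,\widetilde u_\infty\,\eta^2=0,
\]
i.e.\ that $\widetilde u_\infty(\partial_t\widetilde u_\infty-\Delta\widetilde u_\infty)=0$ weakly. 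The paper proves this by a level-set argument: for a.e.\ small $\tau$ (Sard applied to the smooth function $\widetilde u_\infty$ in its positivity set) it integrates by parts over $\{\widetilde u_\infty>\tau\}\cap\{t\}$, estimates the boundary contribution by $O(\tau)$, and sends $\tau\to0$. You instead write $\partial_t\widetilde u_\infty-\Delta\widetilde u_\infty=-\mu$ with $\mu\ge0$ a Radon measure supported in $\{\widetilde u_\infty=0\}$ (caloricity in the positivity set coming, as in the paper, from local uniform convergence and parabolic estimates), and test this measure-valued equation with $\widetilde u_\infty\eta^2$, the pairing $\int\widetilde u_\infty\eta^2\,d\mu$ vanishing because $\widetilde u_\infty$ is continuous and zero on $\operatorname{supp}\mu$. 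This is exactly the observation the paper itself uses later (in the proof of Lemma \ref{lem 4.03}) to verify condition (iii) of Definition \ref{stationary two-valued caloric fct}, so your route is legitimate and arguably more streamlined: it avoids Sard's theorem and the $\tau\to0$ limit, at the cost of having to justify the non-smooth test function $\widetilde u_\infty\eta^2$ in the distributional equation. That justification does need to be spelled out a bit more than you do (mollify $\widetilde u_\infty$, use $\nabla\widetilde u_\infty,\partial_t\widetilde u_\infty\in L^2_{loc}$ for the energy terms, uniform convergence of the mollifications together with local finiteness of $\mu$ for the measure term, and the chain rule $\int\partial_t\widetilde u_\infty\,\widetilde u_\infty\eta^2=-\int\widetilde u_\infty^2\eta\,\partial_t\eta$ for bounded $H^1$ functions), but all ingredients are available from the Hölder bound \eqref{p2} and the uniform energy estimates, so there is no gap, only a routine approximation to be written out.
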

	\begin{proof}
		Take an $Q_R(X_0)\subset\Omega^\infty$ and an arbitrary $\eta\in C_0^{\infty}(Q_R(X_0))$. By our assumption, for any $i$, $\widetilde{u}_i>0$ in $Q_R(X_0)$ and thus $\widetilde{u}_i$ is smooth in $Q_R(X_0)$. Testing \eqref{e1} with $\widetilde{u}_i\eta^2$, we  obtain
		\begin{equation}\label{G3}
			-\int_{Q_R(X_0)} \widetilde{u}_i^2\eta\partial_t\eta+\int_{Q_R(X_0)} |\nabla \widetilde{u}_i|^2\eta^2+\int_{Q_R(X_0)} \varepsilon_i\widetilde{u}_i^{1-p}\eta^2=-\int_{Q_R(X_0)} 2\eta \widetilde{u}_i\nabla \widetilde{u}_i\cdot\nabla\eta.
		\end{equation}
		
		It follows from the Cauchy-Schwarz inequality that
		\begin{equation}\label{G4}
			\int_{Q_R(X_0)} |\nabla \widetilde{u}_i|^2\eta^2+\int_{Q_R(X_0)} \varepsilon_i \widetilde{u}_i^{1-p}\eta^2\leq C\left(\int_{Q_R(X_0)} \widetilde{u}_i^2|\nabla\eta|^2+\int_{Q_R(X_0)} \widetilde{u}_i^2\eta\partial_t\eta\right).
		\end{equation}
		Testing \eqref{e1} with $\partial_t\widetilde{u}_i\eta^2$, by the Cauchy-Schwarz inequality (or equivalently, using the localized energy identity for $\widetilde{u}_i$), we get
		\begin{equation}\label{G5}
			\begin{split}
				\int_{Q_R(X_0)}(\partial_t\widetilde{u}_i)^2\eta^2&\leq C\int_{Q_R(X_0)} \left(|\nabla \widetilde{u}_i|^2+\varepsilon_i\widetilde{u}_i^{1-p}\right)\eta\partial_t\eta.
			\end{split}
		\end{equation}
		Since $(\widetilde{u}_i)$ are  uniformly bounded in any compact set of $\Omega^\infty$, \eqref{G4} and \eqref{G5} imply that $\partial_t\widetilde{u}_i$ and $\nabla \widetilde{u}_i$ are uniformly bounded in $L^2_{loc}(\Omega^\infty)$. Thus up to subsequence, $\partial_t\widetilde{u}_i\rightharpoonup\partial_t\widetilde{u}_{\infty}$ and $\nabla \widetilde{u}_i\rightharpoonup\nabla \widetilde{u}_{\infty}$ weakly in $L^2_{loc}(\Omega^\infty)$.
		
		Sending $i\to\infty$ in \eqref{G3} we also get
		\[
		\begin{split}
			&\lim\limits_{i\to\infty}\int_{Q_R(X_0)}\left(|\nabla \widetilde{u}_i|^2-|\nabla \widetilde{u}_{\infty}|^2\right)\eta^2+\lim\limits_{i\to\infty}\int_{Q_R(X_0)} \varepsilon_i\widetilde{u}_i^{1-p}\eta^2\\
			=&-\int_{Q_R(X_0)} |\nabla \widetilde{u}_{\infty}|^2\eta^2+2\eta \widetilde{u}_{\infty}\nabla \widetilde{u}_{\infty}\cdot\nabla\eta+\partial_t \widetilde{u}_{\infty} \widetilde{u}_{\infty}\eta^2.
		\end{split}
		\]
		For any $t$, by Sard theorem, we can take a  small $\tau>0$ so that $\{\widetilde{u}_{\infty}=\tau\}\cap\{t\}$ is a smooth hypersurface. Because $\partial_t \widetilde{u}_{\infty}=\Delta \widetilde{u}_{\infty}$ in $\{\widetilde{u}_{\infty}>\tau\}$, we have
		\begin{eqnarray*}
			&&\int_{\{\widetilde{u}_{\infty}>\tau\}\cap\{t\}}\left[|\nabla \widetilde{u}_{\infty}|^2\eta^2+2\eta \widetilde{u}_{\infty}\nabla \widetilde{u}_{\infty}\cdot\nabla\eta+\partial_t \widetilde{u}_{\infty}\widetilde{u}_{\infty}\eta^2\right]dx\\
			&=&\tau\int_{\{\widetilde{u}_{\infty}=\tau\}\cap\{t\}}\frac{\partial \widetilde{u}_{\infty}}{\partial\nu_t}\eta^2 dx\\
			&=&\tau\int_{\{\widetilde{u}_{\infty}>\tau\}\cap\{t\}}\left(\nabla \widetilde{u}_{\infty}\cdot\nabla\eta^2+\partial_t\widetilde{u}_\infty \eta^2\right)dx\\
			&=&O\left(\tau \left(\int_{\{\widetilde{u}_{\infty}>\tau\}\cap\{t\}}|\nabla \widetilde{u}_\infty|^2+|\partial_t\widetilde{u}_\infty|^2 dx\right)^{1/2}\right),
		\end{eqnarray*}
		where $\nu_t$ is the unit outer normal vector to $\{\widetilde{u}_{\infty}=\tau\}\cap\{t\}$. Letting $\tau\to0$ and then integrating in $t$, we see
		\[-\int_{Q_R(X_0)} |\nabla \widetilde{u}_{\infty}|^2\eta^2+2\eta \widetilde{u}_{\infty}\nabla \widetilde{u}_{\infty}\cdot\nabla\eta+\partial_t \widetilde{u}_{\infty} \widetilde{u}_{\infty}\eta^2=0.\]
		Hence
		\[
		\lim\limits_{i\to\infty}\int_{Q_R(X_0)}\left(|\nabla \widetilde{u}_i|^2-|\nabla \widetilde{u}_{\infty}|^2\right)\eta^2+\lim\limits_{i\to\infty}\int_{Q_R(X_0)} \varepsilon_i\widetilde{u}_i^{1-p}\eta^2=0.
		\]
		By the weak lower semi-continuity of $L^2-$norm, this establishes \eqref{G1}.
	\end{proof}

\begin{proof}[Proof of Lemma \ref{lem 4.03}]
    Because $\widetilde{u}_\infty\geq 0$, combining \eqref{caloric in positivity set} with the Kato inequality, we see $\widetilde{u}_\infty$ is subcaloric.
	In fact, by \eqref{caloric in positivity set}, $\Delta \widetilde{u}_\infty-\partial_t\widetilde{u}_\infty$ is a positive Radon measure supported in $\{\widetilde{u}_\infty=0\}$. From this property, and because
	$\widetilde{u}_\infty$ is continuous, we deduce that
	\[\widetilde{u}_\infty(\partial_t\widetilde{u}_\infty-\Delta \widetilde{u}_\infty)=0\]
	in the weak sense, that is, $\widetilde{u}_\infty$ satisfies (iii) in Definition \ref{stationary two-valued caloric fct}.

	Next we use Lemma \ref{lem 4.01} to show that $\widetilde{u}_\infty$ satisfies \eqref{stationary condition 2} and \eqref{localized energy inequality 2}. Take an arbitrary $Y\in C_0^{\infty}(\Omega_\infty,\R^n)$. Multiplying \eqref{e1} by $Y\cdot\nabla \widetilde{u}_i$ and integrating by parts give
	\[
	\int \left(\frac{1}{2}|\nabla \widetilde{u}_i|^2-\frac{\varepsilon_i\widetilde{u}_i^{1-p}}{p-1}\right)\text{div} Y-DY(\nabla \widetilde{u}_i,\nabla \widetilde{u}_i)-\partial_t\widetilde{u}_i(\nabla \widetilde{u}_i\cdot Y)=0.
	\]
	By sending $i\to\infty$ and using Lemma \ref{lem 4.01}, we get
	\[
	\int \frac{1}{2}|\nabla \widetilde{u}_\infty|^2\text{div} Y-DY(\nabla \widetilde{u}_\infty,\nabla \widetilde{u}_\infty)-\partial_t\widetilde{u}_\infty(\nabla \widetilde{u}_\infty\cdot Y)=0,
	\]
	that is, $\widetilde{u}_{\infty}$ is stationary.
	
	To show $\widetilde{u}_\infty$ satisfies \eqref{localized energy inequality}, for any $\eta\in C_0^\infty (\Omega_\infty)$, testing \eqref{e1} with $\partial_t \widetilde{u}_i\eta^2$ and integrating by parts, we obtain, for any $t_1<t_2$,
	\begin{equation}\label{G6}
		\begin{split}
			&\int_{\R^{n}}\left(\frac{|\nabla \widetilde{u}_i|^2}{2}-\frac{\varepsilon_i}{p-1}\widetilde{u}_i^{1-p}\right)\eta^2 \Bigg\lfloor_{t_1}^{t_2}\\
			=&-\int_{t_1}^{t_2}\int_{\R^{n}}|\partial_t\widetilde{u}_i|^2 \eta^2-2\int_{t_1}^{t_2}\int_{\R^{n}}\partial_t\widetilde{u}_i\eta \nabla \widetilde{u}_i \cdot\nabla\eta\\
			&+2\int_{t_1}^{t_2}\int_{\R^{n}}\left(\frac{|\nabla \widetilde{u}_i|^2}{2}-\frac{\varepsilon_i}{p-1}\widetilde{u}_i^{1-p}\right)\eta \partial_t\eta.
		\end{split}
	\end{equation}
	Passing to the limit in \eqref{G6} and using the convergence results in Lemma \ref{lem 4.01}, we deduce that $\widetilde{u}_\infty$ satisfies the local energy inequality \eqref{localized energy inequality 2}. 
	\end{proof}
 
	If $\Omega^\infty=\R^{n+1}$ or $\R^n\times(-\infty,T_0]$, then Theorem \ref{thm Liouville} is applicable, which implies that $\widetilde{u}_{\infty}$ is a constant. This is a contradiction since we have assumed that both $\{\widetilde{u}_{\infty}=0\}$ and $\{\widetilde{u}_{\infty}>0\}$ are  nonempty.
	
	If $\Omega^\infty=H_\infty \times (-\infty, T_0]$ or $H_\infty \times \R$, $\widetilde{u}_\infty$ is still a globally H\"{o}lder continuous two-valued caloric function in $\Omega^\infty$.  Moreover, we have
	\begin{lem}\label{lem 4.02}
		There exists a constant $A\geq 0$ such that $\widetilde{u}_\infty\equiv A$ on $\partial H_\infty$, and $0\leq \widetilde{u}_\infty \leq A$ in $\Omega^\infty$.
	\end{lem}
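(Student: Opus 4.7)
The plan is to prove the two assertions of the lemma in sequence.

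First, I would show $\widetilde{u}_\infty$ is constant on $\partial H_\infty$ using the boundary oscillation bound
\[
|\widetilde{\varphi}_i(X)-\widetilde{\varphi}_i(Y)|\leq 2L_i^{-1}\delta(X,Y)^\alpha
\]
derived earlier in Case 1. Since $L_i\to+\infty$, this oscillation tends to $0$ uniformly on bounded pieces of $\partial^p\Omega^i$. In Subcase 1.2, $A_i$ is bounded and $\|\widetilde{u}_i\|_{C^{\alpha,\alpha/2}}\leq 2$, so $(\widetilde{u}_i)$ is locally uniformly bounded and equicontinuous up to the boundary; the locally uniform convergence $\widetilde{u}_i\to\widetilde{u}_\infty$ consequently extends to $\partial H_\infty\times I$. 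Passing to the limit in the oscillation bound forces the boundary trace of $\widetilde{u}_\infty$ to be locally constant on every bounded piece of $\partial H_\infty$, and by connectedness it equals a single value $A\geq 0$ (non-negativity follows from $\widetilde{\varphi}_i\geq 0$).

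Second, I would prove the upper bound $\widetilde{u}_\infty\leq A$ using the subcaloric structure. Each $\widetilde{u}_i$ satisfies $\Delta\widetilde{u}_i-\partial_t\widetilde{u}_i=\varepsilon_i\widetilde{u}_i^{-p}\geq 0$, and this property passes to $\widetilde{u}_\infty$. Consequently $v:=(\widetilde{u}_\infty-A)_+$ is a non-negative subcaloric function on $\Omega^\infty$, vanishes on $\partial H_\infty$, and satisfies the sublinear growth bound $v(X)\leq C(1+\delta(X,0)^\alpha)$ with $\alpha<1$. I would then invoke a parabolic Phragm\'en--Lindel\"of argument on the half-space: compare $v$ with the linear caloric barrier $\varepsilon\,\mathrm{dist}(\cdot,\partial H_\infty)$ on large bounded cylinders $(H_\infty\cap B_R)\times(t_1,t_2)$. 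The strict inequality $\alpha<1$ ensures this barrier eventually dominates $v$ on the lateral spatial boundary; the maximum principle then gives $v\leq\varepsilon\,\mathrm{dist}(\cdot,\partial H_\infty)$ in the cylinder, and sending $R\to+\infty$ then $\varepsilon\to 0$ yields $v\equiv 0$.

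The main obstacle is the bi-infinite time case $I=\mathbb{R}$, where no initial bound on $v$ is available and the H\"older estimate only gives $v(x,t)\lesssim|t|^{\alpha/2}$ as $t\to-\infty$. I would address this via the parabolic self-similar rescaling $v_R(X):=R^{-\alpha}v(Rx,R^2t)$, which preserves subcalority, the vanishing on $\partial H_\infty$, and the H\"older bound, while replacing the additive constant in the growth bound by $CR^{-\alpha}\to 0$. Extracting a locally uniform limit $v_\infty$ as $R\to+\infty$ yields a non-negative subcaloric function on $\Omega^\infty$ vanishing on $\partial H_\infty$ with $v_\infty(0)=0$; the parabolic strong minimum principle then forces $v_\infty\equiv 0$ in the past cone of $0$, which (upon unravelling the rescaling) provides the uniform decay needed to close the Phragm\'en--Lindel\"of argument and conclude $v\equiv 0$.
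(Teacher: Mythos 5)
Your first step---showing that $\widetilde{u}_\infty$ is constant on $\partial H_\infty$---is correct and is essentially what the paper does: the oscillation of $\widetilde{\varphi}_i$ on bounded pieces of $\partial^p\Omega^i$ is $O(L_i^{-1})$, so the boundary trace of $\widetilde{u}_\infty$ is a constant $A\geq 0$.

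For the upper bound $\widetilde{u}_\infty\leq A$ you take a genuinely different route, and this is where the argument breaks down. The key flaw is the invocation of a ``parabolic strong minimum principle'' for the blow-down limit $v_\infty$. The function $v=(\widetilde{u}_\infty-A)_+$ and its blow-down $v_\infty$ are \emph{sub}caloric, not \emph{super}caloric, so they satisfy a strong maximum principle, not a strong minimum principle. A nonnegative subcaloric function may well vanish at an interior point without being identically zero (think of $|x|^2$, or of $|x_n|$, which is a two-valued caloric function vanishing on a hyperplane). So the step ``$v_\infty(0)=0$ and $v_\infty$ subcaloric forces $v_\infty\equiv 0$ in the past cone'' is simply false. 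Moreover, even if one somehow established $v_\infty\equiv 0$, this only gives $v(X)=o(\delta(X,0)^\alpha)$ as $\delta(X,0)\to\infty$, which still blows up on the bottom slice $\{t=t_1\}$ as $t_1\to-\infty$; it does not provide what is needed to dominate $v$ by the linear barrier $\varepsilon x_n$ on a \emph{fixed} bounded cylinder, so the Phragm\'en--Lindel\"of argument is not closed.

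A cleaner way to push your idea through would be to use the sharper bound $v(x,t)\leq C\,x_n^\alpha$ (which follows directly from the $\alpha$-H\"older continuity of $\widetilde{u}_\infty$ together with $\widetilde{u}_\infty=A$ on $\partial H_\infty$), restrict to the slab $\{0<x_n<R_\varepsilon\}$ with $R_\varepsilon:=(C/\varepsilon)^{1/(1-\alpha)}$ where the barrier $\varepsilon x_n$ dominates on the outer lateral face, note that $v$ is then uniformly bounded in that slab, and handle the missing initial condition via the exponential decay of bounded caloric functions in a slab with zero lateral data as $t_1\to-\infty$. This would work, but it is appreciably heavier than what the paper does. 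The paper avoids all of this by comparing \emph{before} rescaling: since $u_i$ is a subsolution of the heat equation with the same initial-boundary data $\varphi_i$ as the caloric function $\Phi_i$, the comparison principle gives $u_i\leq\Phi_i$. After rescaling, $\widetilde{\Phi}_i$ has H\"older seminorm $O(L_i^{-1})\to 0$, hence converges locally uniformly to the constant $A$, and passing to the limit in $\widetilde{u}_i\leq\widetilde{\Phi}_i$ immediately yields both conclusions of the lemma. The moral is that the comparison structure is available (and cheap) for the unscaled positive solutions $u_i$, but is lost once one passes to the singular limit $\widetilde{u}_\infty$, which is why working directly with the limit as you propose requires considerably more machinery.
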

	If this claim is true, we can apply Theorem \ref{thm half space} to deduce that $\widetilde{u}_{\infty}\equiv A$. This is a contradiction as before.

	\begin{proof}[Proof of Lemma \ref{lem 4.02}]
Let $\Phi_i$ be the solution of the heat equation in $\Omega_{T_i}$ with initial-boundary value condition $\Phi_i=\varphi_i$ on $\partial^p\Omega_{T_i}$. By the comparison principle,
		\begin{equation}\label{comparison with heat eqn}
			u_i\leq \Phi_i \quad \text{in} ~~ \Omega_{T_i}.
		\end{equation}
		Similar to $\widetilde{u}_i$ and $\widetilde{\varphi}_i$, define
		\[\widetilde{\Phi}_i(X)=L_i^{-1}\lambda_i^{-\alpha }\Phi_i(x_i+\lambda_ix,t_i+\lambda_i^2 t).\]
		We have the boundary condition $\widetilde{u}_i=\widetilde{\varphi}_i$ on $\partial^p\Omega^i$, and rescaling \eqref{comparison with heat eqn} gives
		\begin{equation}\label{comparison with heat eqn 2}
			\widetilde{u}_i\leq \widetilde{\Phi}_i \quad \text{in} ~~ \Omega^i.
		\end{equation}
		By standard regularity theory for heat equation, $\Phi_i$ are uniformly bounded in $ C^{\alpha,\alpha/2}(\overline{\Omega_{T_i}})$, so there exists a constant $C$ independent of $i$ such that
		\[\|\widetilde{\Phi}_i\|_{C^{\alpha,\alpha/2}(\Omega^i)}\leq CL_i^{-1}.\]
		Then because $\widetilde{\varphi}_i$ converges locally uniformly to $A$,  $\widetilde{\Phi}_i$ also converges to $A$ locally uniformly in $\Omega^\infty$.  Passing to the limit in \eqref{comparison with heat eqn 2} as well as in the initial-boundary condition $\widetilde{u}_i=\widetilde{\varphi}_i$, we get the two conclusions in this lemma.
	\end{proof}

	Finally, if $\Omega^\infty=\R^n\times[T_0,+\infty)$,  $\widetilde{u}_\infty$ is still a globally H\"{o}lder continuous two-valued caloric function in $\Omega^\infty$. Moreover, $\widetilde{u}_\infty\equiv A$ on $\R^n\times\{T_0\}$. If $A=0$, because $\widetilde{u}_\infty$ is sub-caloric, maximum principle implies that  $\widetilde{u}_\infty\equiv  0$ in $\Omega^\infty$. If $A>0$, by the globally H\"{o}lder continuity of $\widetilde{u}_\infty$,  there exists	a $\delta>0$ such that $\widetilde{u}_\infty>0$ in $\R^n\times[T_0,T_0+\delta]$. Then $\widetilde{u}_\infty$ is caloric in $\R^n\times(T_0,T_0+\delta)$. By the uniqueness of solutions to the Cauchy problem for heat equation,  $\widetilde{u}_\infty\equiv A$ in $\R^n\times[T_0,T_0+\delta]$. This procedure can be continued further in time, leading to the conclusion that $\widetilde{u}_\infty\equiv A$ in $\Omega^\infty$. The case when $\Omega^\infty=H_\infty\times[T_0,+\infty)$ can be proved in the same way.

	{\bf Case 2.} $\lambda_i\to\lambda_\infty>0$.
	
	This assumption, together with \eqref{31},  implies that either $u_i(X_i)$ or $u_i(Y_i)\to+\infty$. Without loss of generality, assume $u_i(X_i)\to+\infty$.
	
	Recall that $\Phi_i$ is the solution of  heat equation in $\Omega_{T_i}$ with $\Phi_i=\varphi_i$ on $\partial^p\Omega_{T_i}$.
	Since $\partial_tu_i-\Delta u_i\leq0$ and it has the same initial-boundary value, comparison principle implies that  $0\leq u_i\leq \Phi_i$ in $\Omega_{T_i}$. Hence $\Phi_i(X_i)\to+\infty$. Because $\Phi_i$ and $\varphi_i$ are uniformly $\alpha$-H\"{o}lder continuous (on $\Omega_{T_i}$ and $\partial^p\Omega_{T_i}$, respectively), and $0\leq t_i\leq T_i$ are bounded, we must have
	\[\text{dist}(x_i,\partial\Omega)\to+\infty.\]
	Together with the conditions on $\lambda_i$ and $T_i$, this implies that $\Omega^i$ converges to  $\Omega^\infty=\R^n\times(a,b)$ for two constants $a\leq 0 <b$. Then we can get a contradiction as before, by using the uniqueness of the solution to the Cauchy problem for heat equation (when $\widetilde{u}_i(0)\to+\infty$) or by a continuation argument (when $\widetilde{u}_i(0)\to A\in[0,+\infty)$).

	{\bf Case 3.} $\lambda_i\to+\infty$.
	
	By the uniform H\"{o}lder continuity of $\varphi_i$, we have
	\begin{eqnarray*}
	L_i \lambda_i^\alpha	&=&|u_i(x_i,t_i)-u_i(y_i,s_i)|\\
		&\leq &|u_i(x_i,t_i)-\varphi_i(x_i,0)|+|\varphi_i(x_i,0)-\varphi_i(y_i,0)|+|\varphi_i(y_i,0)-u_i(y_i,s_i)| \\
		&\leq & |u_i(x_i,t_i)-\varphi_i(x_i,0)|+C|x_i-y_i|^\alpha+|\varphi_i(y_i,0)-u_i(y_i,s_i)|\\
		&\leq & |u_i(x_i,t_i)-\varphi_i(x_i,0)|+C\lambda_i^\alpha+|\varphi_i(y_i,0)-u_i(y_i,s_i)|
	\end{eqnarray*}
Hence either $|u_i(x_i,t_i)-\varphi_i(x_i,0)|$ or $|\varphi_i(y_i,0)-u_i(y_i,s_i)|$  is larger than $L_i\lambda_i^\alpha/4$. Assume it is the first case.
Then because $0\leq t_i\leq T_i\leq 2T$ for all $i$ large, we have
\[\frac{|u_i(x_i,t_i)-\varphi_i(x_i,0)|}{t_i^{\alpha/2}}\geq \frac{L_i\lambda_i^\alpha}{16T^{\alpha/2}} >2L_i.\]
This is a contradiction with the assumption that the H\"{o}lder semi-norm of $u_i$ is not larger than $2L_i$. In conclusion, this case cannot appear if $i$ is large enough.
	
	\subsection{Proof of Theorem \ref{thm 1.3}}	
	In this subsection, we use similar ideas to prove Theorem \ref{thm 1.3}, which is about interior H\"{o}lder estimate. So we need to take a localization, but we do not need to consider the near boundary case in the previous subsection.	
	\begin{proof}[{Proof of Theorem \ref{thm 1.3}}]
		Take an $\eta\in C_0^\infty(Q_2^-)$ satisfying
		\[
		\eta=\left\{
		\begin{array}{lll}
			1 \quad &\text{if}~~(x,t)\in (Q_{1}^-)^\circ\\
			>0 \quad &\text{if}~~(x,t)\in (Q_{3/2}^-)^\circ\\
			0 \quad &\text{otherwise}.
		\end{array}
		\right.
		\]
		Set $\hat{u}_i=u_i\eta$. We only need to prove
		\[\sup _i \left \|\hat{u}_i\right \|_{C^{\alpha,\alpha/2}(\overline{Q_{3/2}^-})}<+\infty.\]
		Assume by the contrary that there exist $X_i=(x_i,t_i), Y_i=(y_i,s_i)\in \overline{Q_{3/2}^-}$ such that
		\[
		L_i:=\frac{|\hat{u}_i(X_i)-\hat{u}_i(Y_i)|}{\delta(X_i,Y_i)^\alpha}=\sup_{\overline{Q_{3/2}^-}} \frac{|\hat{u}_i(X)-\hat{u}_i(Y)|}{\delta(X,Y)^\alpha}\to+\infty.
		\]
		Since $\partial_tu_i-\Delta u_i\leq0$ and $\sup_{i}\int_{Q_2^-}u_i(x,t)<+\infty$,  $\sup_{i}\|u_i\|_{L^\infty(Q_{5/3}^-)}<+\infty$ and $\sup_{i}\|\hat{u}_i\|_{L^\infty(Q_{3/2}^-)}<+\infty$. Thus $\delta(X_i,Y_i)\to0$ as $i\to\infty$.
		
		Let $\lambda_i=\delta(X_i,Y_i)$, $Z_i=\mathcal{D}_{X_i,\lambda_i}(Y_i)$. Define two blow-up sequences
		\[
		{\widetilde{u}}_i(x,t):=L_i^{-1}\lambda_i^{-\alpha }\widehat{u}_i(x_i+\lambda_ix,t_i+\lambda_i^2 t).
		\]
		and
		\[
		\bar{u}_i(x,t):= L_i^{-1}\lambda_i^{-\alpha}u_i(x_i+\lambda_ix,t_i+\lambda_i^2 t)\eta(x_i,t_i).
		\]
		Here $\widetilde{u}_i$ and $\bar{u}_i$ are defined in $Q^i:=(Q_{3/2}^-)_{X_i,\lambda_i}$. 
  
  By their definitions and a rescaling,
		\begin{equation}\label{223}
			\widetilde{u}_i(x,t)=\frac{\eta(x_i+\lambda_ix,t_i+\lambda_i^2 t)}{\eta(x_i,t_i)}\bar{u}_i(x,t)
		\end{equation}
		and
		\begin{equation}\label{225}
			\begin{split}
				|\widetilde{u}_i(0)-\widetilde{u}_i(Z_i)|&=L_i^{-1}\lambda_i^{-\alpha}|\hat{u}_i(X_i)-\hat{u}_i(Y_i)|\\
				&=\sup_{Q^i}\frac{|\tilde{u}_i(X)-\tilde{u}_i(Y)|}{\delta(X,Y)^\alpha}=1.
			\end{split}
		\end{equation}
		Since $\eta$ is Lipschitz, for any $X\in Q^i$,
		\begin{equation} \label{226}
			\begin{split}
				|\tilde{u}_i(X)-\bar{u}_i(X)| &\leq CL_i^{-1}\lambda_i^{-\alpha}\sup_{Q^i}|\eta(x_i+\lambda_i x,t_i+\lambda_i^2 t)-\eta(x_i,t_i)|
				\\ &\leq C L_i^{-1}\lambda_i^{\frac{p-1}{p+1}} \left(|x|^2+|t|\right)^{1/2}\leq C L_i^{-1}\lambda_i^{\frac{p-1}{p+1}} \delta(X,0),
			\end{split}
		\end{equation}
		where $C>0$ depends only on the Lipschitz constant of $\eta$. Thus $|\tilde{u}_i(x,t)-\bar{u}_i(x,t)|\to0$ locally uniformly in $Q^\infty:=\lim_{i}Q^i$ as $i\to\infty$. Furthermore, by the Lipschitz continuity of $\eta$, we also have
		\begin{equation}\label{227}
			\tilde{u}_i(X) \leq C L_i^{-1}\lambda_i^{-\alpha}\eta(x_i+\lambda_ix,t_i+\lambda_i^2t) \leq C L_i^{-1}\lambda_i^{\frac{p-1}{p+1}}\delta(X,\partial Q^i).
		\end{equation}
		Direct calculation shows that $\bar{u}_i$ satisfies
		\begin{equation}\label{228}
			\partial_t \bar{u}_i - \Delta\bar{u}_i= -\varepsilon_i \bar{u}_i^{-p},
		\end{equation}
		where $\varepsilon_i=L_i^{-(1+p)}\eta(X_i)^{(1+p)}\to0$ as $i\to\infty$.
		
		As in the proof of Theorem \ref{thm 1.1}, we divide the proof into two cases depending on the behavior of $A_i:=\tilde{u}_i(0)$.
		
		{\bf Case 1:} $A_i\to +\infty$.
		
		In this case, by \eqref{227},
		\[
		\delta(0,\partial ^pQ^i) \geq C^{-1}L_i \lambda_i^{-\frac{p-1}{p+1}}A_i \to +\infty \quad \text{as $i\to\infty$},
		\]
		which implies $Q^\infty=\R^n\times(-\infty,0)$ or $\R^{n+1}$. We prove only the first case. Up to subsequence, assume $(\tilde{u}_i-A_i)\to\bar{u}_\infty$ locally uniformly in $\R^n\times(-\infty,0)$. By \eqref{226}, $(\bar{u}_i-A_i)\to\bar{u}_\infty$ locally uniformly in  $\R^n\times(-\infty,0)$. Similar to Case 1 in the proof of Theorem \ref{thm Holder estimate, general}, we  have
		\[
		\partial_t \bar{u}_{\infty}-\Delta\bar{u}_{\infty}=0~~\text{in}~~\R^n\times(-\infty,0),
		\]
		and
		\[
		1=|\bar{u}_{\infty}(0)-\bar{u}_{\infty}(Z_{\infty})|=\sup_{\R^n\times(-\infty,0]}\frac{|\bar{u}_{\infty}(X)- \bar{u}_{\infty}(Y) |}{\delta(X,Y)^\alpha}.
		\]
		Hence $\bar{u}_\infty$ is a globally H\"{o}lder continuous solution to the heat equation in $\R^n\times(-\infty,0]$. By the generalized Liouville theorem for  heat equation, $\bar{u}_\infty$ must be a constant, which is a contradiction.
		
		{\bf Case 2:} $A_i\in [0,+\infty)$.
		
		By \eqref{225} and \eqref{227}, we obtain
		\begin{equation}\label{234}
			1\leq \tilde{u}_i(0)+\tilde{u}_i(Z_i).
		\end{equation}
		and
		\[\label{235}
		C^{-1}L_i \lambda_i^{-\frac{p-1}{p+1}} \leq \delta(0,\partial^p Q^i)+\delta(Z_i,\partial^p Q^i) \leq 2\delta(0,\partial^p Q^i) +1.
		\]
		This means $\delta(0,\partial^p Q^i)\to+\infty$ and thus $Q^\infty=\R^n\times(-\infty,0)$ or $\R^{n+1}$. Since $\tilde{u}_i$ are locally uniformly bounded in $\R^n\times(-\infty,0]$, we may assume $\tilde{u}_i$ and $\bar{u}_i$ converge to $\bar{u}_\infty$ locally uniformly in $\R^{n+1}$. Letting $i\to\infty$ in \eqref{234} gives
		\[\label{236}
		1\leq \tilde{u}_\infty(0)+\tilde{u}_\infty(Z_\infty).
		\]
		where $Z_\infty:=\lim_{i}Z_i$. Thus $D:=\{\bar{u}_\infty>0\}$ is non-empty. If $\{\bar{u}_\infty=0\}=\emptyset$, we argue as in Case 1 to get a contradiction.
		
		If $\{\bar{u}_\infty=0\}\neq\emptyset$, following the analysis in  Case 2 of the previous subsection, we deduce that $\bar{u}_\infty$ is a stationary, two-valued caloric function. Therefore, by Theorem \ref{thm Liouville}, $\bar {u}_{\infty}$ is a constant function, which contradicts the fact that both  $\{\bar{u}_{\infty}=0\}$ and $\{\bar{u}_{\infty}>0\}$ are nonempty.
	\end{proof}

	\section{Convergence theory}\label{section 4}
	\setcounter{equation}{0}

	This section is  devoted to the proof of Theorem \ref{thm 1.6}. Recall that we assume   $(u_i)$ is a sequence of suitable weak solutions of \eqref{eqn} in $Q_2^-=B_2(0)\times (-4,0]$, satisfying
	\begin{equation}\label{uniform Holder assumption}
		\sup\limits_{i}\|u\|_{C^{\alpha,\alpha/2}(\overline{Q_{3/2}^-})}<+\infty.
	\end{equation}
	By Theorem \ref{thm 1.3}, this includes the case that $(u_i)$ are positive solutions satisfying \eqref{14}.
	
	We first present several uniform a priori estimates on $u_i$.
	\begin{lem}\label{lemma 4.1}
		If $X\in\{u_i>0\}\cap Q_1^-$, then $|\nabla u_i(X)|\leq Cu_i(X)^{\frac{1-p}{2}}$.
	\end{lem}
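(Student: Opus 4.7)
The desired inequality $|\nabla u_i|\leq C\,u_i^{(1-p)/2}$ is scaling invariant under the transformation $u_\lambda(y,s):=\lambda^{-\alpha}u(\lambda y,\lambda^2 s)$ that preserves \eqref{eqn}: both sides transform by the same factor $\lambda^{-(1+\alpha)}$, thanks to the identity $\alpha(p+1)=2$. This strongly suggests proving the lemma by a rescaling-plus-Schauder argument, which exploits the uniform Hölder bound \eqref{uniform Holder assumption}, itself a scaling-invariant piece of information.

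Fix $X\in\{u_i>0\}\cap Q_1^-$ and set $a:=u_i(X)>0$, $\lambda:=a^{(p+1)/2}=a^{1/\alpha}$. The plan is to consider the rescaled function
\[v(y,s):=a^{-1}\,u_i\bigl(X+(\lambda y,\lambda^2 s)\bigr),\]
which still solves \eqref{eqn}, satisfies $v(0)=1$, and has the same Hölder semi-norm as $u_i$, hence is uniformly bounded. From the Hölder bound I would deduce the existence of a parabolic radius $r_0>0$, depending only on the uniform Hölder constant, such that $v\geq 1/2$ on $Q_{r_0}^-$. On this backward cylinder the right-hand side $-v^{-p}$ of the equation for $v$ is bounded by $2^p$, so standard interior parabolic regularity (Schauder or $W^{2,1}_q$ estimates) applied to $v$ at the origin yields $|\nabla v(0)|\leq C$ for some universal constant $C$. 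Undoing the rescaling then gives
\[|\nabla u_i(X)|\,=\,a\lambda^{-1}|\nabla v(0)|\,=\,a^{(1-p)/2}|\nabla v(0)|\,\leq\,C\,u_i(X)^{(1-p)/2},\]
as desired.

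The only point requiring care is that the rescaled cylinder $Q_{\lambda r_0}^-(X)$ must lie in $Q_2^-$ for the interior estimate to be valid; this holds automatically once $a$ is smaller than a fixed constant depending on $r_0$ and on the parabolic distance $\delta(Q_1^-,\partial^p Q_2^-)\geq 1$. In the complementary regime in which $a$ is bounded below by a positive constant, the uniform $L^\infty$-bound coming from \eqref{uniform Holder assumption} makes $u_i(X)^{(1-p)/2}$ comparable to a positive constant, and applying interior parabolic regularity directly to $u_i$ in a fixed-size neighborhood of $X$ on which Hölder continuity keeps $u_i$ bounded away from $0$ delivers $|\nabla u_i(X)|\leq C$, trivially yielding the claim. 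I do not expect any serious obstacle beyond these verifications; the essential content is the scaling compatibility of \eqref{eqn} and the estimate together with the scale-invariance of the Hölder bound.
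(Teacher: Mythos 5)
Your proof is correct and is essentially the paper's argument: rescale by $\lambda=u_i(X)^{1/\alpha}$ so the value at the base point becomes $1$, use the uniform H\"older bound to pin the rescaled solution between $1/2$ and $2$ on a fixed backward cylinder, apply interior parabolic gradient estimates, and scale back. (Only a cosmetic remark: in your opening scaling heuristic both sides transform by $\lambda^{1-\alpha}$, not $\lambda^{-(1+\alpha)}$, but this does not affect the actual computation, which is right.)
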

	\begin{proof}
		Let $u_i(X)=:h^{\alpha}>0$. By the uniform H\"{o}lder continuity assumption \eqref{uniform Holder assumption}, there exists a $\sigma>0$ depending only on the H\"{o}lder semi-norm of $u_i$ such that
		\[h^{\alpha}/2 \leq u_i \leq 2h^\alpha \quad \text{in} ~~ Q_{\sigma h}^-(X). \]
		Define
		\[\bar u_i(Y)=h^{-\alpha}u_i(X+\mathcal{D}_{h^{-1}}(Y)).\]
		Then $\bar u_i$ satisfies  \eqref{eqn} and $1/2\leq \bar u \leq 2$ in $Q_\sigma^-(0)$. By standard interior gradient estimates for parabolic equations, $|\nabla \bar u_i(0)|\leq C$. After rescaling back we get the desired estimate.
	\end{proof}
	
	\begin{lem}\label{lemma 4.2}
		There exists a constant $C>0$ independent of $i$, such that
		
		(1)~for any $i$, $X\in Q_1^-$ and $r\in(0,\tfrac{1}{2})$,
		\begin{equation}\label{es1}
			\int_{Q_r^-(X)}u_i^{-p}\leq Cr^{n+\frac{2}{p+1}},
		\end{equation}
		
		(2)~for any $i$, $X\in Q_1^-$ and $r\in(0,\frac{1}{2})$,
		\begin{equation}\label{es2}
			\int_{Q_{r}^-(X)}\left(|\nabla u_i|^2+u_i^{1-p}\right)\leq Cr^{n+\frac{4}{p+1}};
		\end{equation}
		
		(3)~for any $i$, $X\in Q_1^-$ and $r\in(0,\frac{1}{4})$,
		\begin{equation}\label{es3}
			\int_{Q_{r}^-(X)}(\partial_tu_i)^2\leq Cr^{n-2+\frac{4}{p+1}};
		\end{equation}
		
		(4)~ [Nondegeneracy] for any $i$, $X\in Q_1^-$ and $r\in(0,\frac{1}{2})$,
		\begin{equation}\label{es4}
			\int_{Q_{r}^-(X)}u_i\geq Cr^{n+2+\frac{2}{p+1}};
		\end{equation}
		
		(5)~for any $i$, $X, Y\in Q_1^-$,
		\begin{equation}\label{es5}
			|u_i(X)-u_i(Y)|\leq C\delta(X,Y)^{\alpha}.
		\end{equation}
	\end{lem}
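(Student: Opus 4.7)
\bigskip

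\noindent\textbf{Proof plan for Lemma \ref{lemma 4.2}.}

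The plan is to prove the five bounds in the order (5), (1), (2), (3), (4), using the distributional form of the equation and the localized energy inequality from Definition \ref{def 1.5}, together with the gradient bound of Lemma \ref{lemma 4.1} and the H\"older hypothesis \eqref{uniform Holder assumption}. Estimate (5) is immediate: since $Q_1^-\subset Q_{3/2}^-$, the $C^{\alpha,\alpha/2}(\overline{Q_{3/2}^-})$ bound restricts to (5). Throughout, let $L$ denote a uniform bound on the $C^{\alpha,\alpha/2}$-norm of $u_i$.

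For (1), I would perform a dichotomy at scale $r^\alpha$. \emph{Case A: $u_i(X)\geq 2Lr^\alpha$.} Then (5) forces $u_i\geq Lr^\alpha$ on $Q_r^-(X)$, so $u_i^{-p}\leq L^{-p}r^{-\alpha p}$ pointwise and, since $-\alpha p+n+2=n+2/(p+1)$, integration gives (1) directly. \emph{Case B: $u_i(X)<2Lr^\alpha$.} Then (5) gives $u_i\leq Cr^\alpha$ on $Q_{2r}^-(X)$, and I would pick a cutoff $\eta\in C_c^\infty$ with $\eta\equiv 1$ on $Q_r^-(X)$, $\operatorname{supp}\eta\subset Q_{2r}^-(X)$, $|\partial_t\eta|+|\Delta\eta|\leq Cr^{-2}$. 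Testing the distributional identity $\partial_tu_i-\Delta u_i=-u_i^{-p}$ against $\eta$ and integrating by parts yields
\[
\int u_i^{-p}\eta = \int u_i(\partial_t\eta+\Delta\eta)+(\text{boundary terms at }s=t),
\]
and each term is bounded by $C\cdot r^\alpha\cdot r^{-2}\cdot r^{n+2}=Cr^{n+\alpha}$, proving (1).

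For (2) I would test the equation against $u_i\eta^2$. After integration by parts and a Young-type absorption of the cross term $2\int u_i\eta\nabla u_i\cdot\nabla\eta$, one obtains
\[
\int u_i^{1-p}\eta^2+\tfrac12\int|\nabla u_i|^2\eta^2\leq C\int u_i^2(|\nabla\eta|^2+\eta|\partial_t\eta|)+\text{boundary}.
\]
In Case B above, $u_i\leq Cr^\alpha$ on $\operatorname{supp}\eta$, so the right-hand side is $\leq Cr^{2\alpha}\cdot r^{-2}\cdot r^{n+2}=Cr^{n+4/(p+1)}$. In Case A, Lemma \ref{lemma 4.1} gives $|\nabla u_i|^2\leq Cu_i^{1-p}$ on $Q_r^-(X)\subset\{u_i>0\}$, and the pointwise lower bound $u_i\geq cr^\alpha$ makes $u_i^{1-p}\leq Cr^{\alpha(1-p)}$, again yielding (2) after integration. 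For (3) I would invoke the localized energy inequality from Definition \ref{def 1.5}(iii) with a cutoff $\eta$ of the same type. Absorbing $-2\int\partial_tu_i\,\eta\,\nabla u_i\cdot\nabla\eta$ into $\tfrac12\int|\partial_tu_i|^2\eta^2$ gives
\[
\int_{Q_r^-(X)}|\partial_tu_i|^2\leq Cr^{-2}\int_{Q_{2r}^-(X)}\!\!\bigl(|\nabla u_i|^2+u_i^{1-p}\bigr)+\text{boundary},
\]
and applying (2) on the right produces (3).

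For the nondegeneracy (4), I would apply Chebyshev's inequality to (1): for any $\delta>0$,
\[
|E_\delta|:=|\{u_i\leq\delta\}\cap Q_r^-(X)|\leq\delta^p\!\int_{Q_r^-(X)}u_i^{-p}\leq C\delta^p r^{n+2/(p+1)}.
\]
Choosing $\delta=\varepsilon r^\alpha$ for $\varepsilon$ small enough makes $|E_{\varepsilon r^\alpha}|\leq C\varepsilon^pr^{n+2}\leq\tfrac12|Q_r^-(X)|$, so on the complement $u_i>\varepsilon r^\alpha$ on a set of measure $\geq\tfrac12|Q_r^-(X)|$, giving $\int u_i\geq cr^{n+2+2/(p+1)}$.

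The main technical obstacle is handling the case when $X=(x,t)\in Q_1^-$ has $t$ close to $0$, so that $Q_r^-(X)$ touches the top of the domain and a cutoff $\eta$ equal to $1$ on $Q_r^-(X)$ cannot vanish smoothly at $s=t$. I would deal with this by taking $\eta$ with support extending backward in time only, accepting a boundary term at $s=t$ which, after the integration by parts in time, equals $\int u_i(y,t)\eta(y,t)\,dy$ (or a similar quadratic expression for (2)); the H\"older bound $u_i\leq Cr^\alpha$ (Case B) or the direct pointwise bound (Case A) makes this term of the same order $r^{n+\alpha}$ (resp.\ $r^{n+2\alpha}$) as the other terms, so it is harmless. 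An equivalent option is to approximate $X$ by interior base points $X_\varepsilon=(x,t-\varepsilon)$ and pass to the limit using the continuity of $u_i$ up to the top.
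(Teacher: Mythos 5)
Your five estimates are all correct, but for (1), (2) and (4) you take a genuinely different route from the paper, so a comparison is in order. For (1) the paper avoids your dichotomy altogether: it tests with the same cutoff but writes $\int(\Delta u_i-\partial_t u_i)\eta=\int(\Delta\eta+\partial_t\eta)\left[u_i-u_i(X)\right]$, i.e.\ it subtracts the constant $u_i(X)$ (legitimate because the space--time integral of $\Delta\eta+\partial_t\eta$ vanishes up to exactly the top-slice term you discuss), and then the H\"older bound $|u_i-u_i(X)|\leq Cr^{\alpha}$ on $Q_{2r}^-(X)$ gives $Cr^{n+\alpha}$ in one stroke; your Case A/Case B split is equally valid but buys nothing extra. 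For (2) the paper never tests with $u_i\eta^2$: it deduces $\int_{Q_r^-}u_i^{1-p}\leq\bigl(\int_{Q_r^-}u_i^{-p}\bigr)^{\frac{p-1}{p}}|Q_r^-|^{\frac1p}\leq Cr^{n+\frac{4}{p+1}}$ directly from (1) by H\"older's inequality, and then converts this into the gradient bound through the pointwise estimate $|\nabla u_i|^2\leq Cu_i^{1-p}$ of Lemma \ref{lemma 4.1} on $\{u_i>0\}$ (the set $\{u_i=0\}$ being null since $u_i^{-p}\in L^1$). This is a bit cleaner than your Caccioppoli-type argument, which additionally requires justifying $u_i\eta^2$ as a test function for a merely suitable weak solution (harmless, since Definition \ref{def 1.5} already encodes $\nabla u_i,\partial_tu_i\in L^2_{loc}$ and the paper performs the same manipulation in Lemma \ref{lemma 4.7}, but it is an extra step). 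For (4) the paper does not go through (1) and Chebyshev: testing with a cutoff gives $\int_{Q_{r/2}^-(X)}u_i^{-p}\leq Cr^{-2}\int_{Q_r^-(X)}u_i$, and Jensen's inequality applied to the average of $u_i$ over $Q_{r/2}^-(X)$ bounds the same quantity from below by $c\,r^{(n+2)(p+1)}\bigl(\int_{Q_r^-(X)}u_i\bigr)^{-p}$, whence $\int_{Q_r^-(X)}u_i\geq cr^{n+2+\frac{2}{p+1}}$; this version uses only the equation, with no H\"older input, while your Chebyshev argument is shorter but inherits the H\"older hypothesis through (1). Your (3) and (5) coincide with the paper's proof. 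One caution on (3): for cylinders whose top touches $t=0$ the endpoint contribution of the energy inequality is $-\int\bigl(\tfrac{|\nabla u_i|^2}{2}-\tfrac{u_i^{1-p}}{p-1}\bigr)\eta^2$ evaluated at the final time, which is not a harmless quadratic term and has no favorable sign; the paper sidesteps this by integrating the energy inequality with a cutoff compactly supported in time, so if you want the estimate literally up to the top slice you should select good time levels (e.g.\ by a mean-value/pigeonhole choice using \eqref{es2}) rather than rely on the boundary-term remark you made for (1)--(2) --- a shared loose end rather than a gap specific to your argument.
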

	\begin{proof}
 Take $\eta\in C_0^{\infty}\left(Q_{2r}^-(X)\right)$ satisfying
		\[
		\eta\equiv1~\text{in}~(Q_r^-(X))^\circ,~|\nabla \eta|<C r^{-1},~|\Delta\eta|+|\partial_t\eta|\leq C r^{-2}.
		\]
		(1) Testing the equation of $u_i$ with $\eta$, we get
		\[
		\begin{split}
			\int_{Q_r^-(X)}u_i^{-p}&\leq \int u_i^{-p}\eta=\int (\Delta u_i-\partial_t u_i)\eta\\
            &=\int (\Delta\eta+\partial_t\eta)\left[u_i-u_i(X)\right]\leq Cr^{n+\frac{2}{p+1}},
		\end{split}
		\]
		where we have used the $\alpha$-continuity of $u_i$. This gives the estimate in \eqref{es1}.
		
		(2) By H\"{o}lder inequality,
		\[
		\begin{split}
			\int_{Q_r^-(X)}u_i^{1-p}\leq \left(\int_{Q_r^-}u_i^{-p}\right)^{\frac{p-1}{p}}|Q_r^-|^{\frac{1}{p}}\leq Cr^{n+\frac{4}{p+1}}.
		\end{split}
		\]	
		Then using Lemma \ref{lemma 4.1}, we obtain
		\[
		\begin{split}
			\int_{Q_r^-(X)} |\nabla u_i|^2=\int_{Q_r^-(X)\cap\{u>0\}}|\nabla u_i|^2\leq C\int_{Q_r^-(X)} u_i^{1-p}\leq Cr^{n+\frac{4}{p+1}},
		\end{split}
		\]
		which establishes \eqref{es2}.

	(3)	To get inequality \eqref{es3}, we integrate the energy inequality \eqref{localized energy inequality} on $\R$ and use the Cauchy-Schwarz inequality, which leads to
		\[
		\begin{split}
			\int_{Q_{2r}^-(X)} (\partial_tu_i)^2\eta^2&\leq C\left[\int_{Q_{2r}^-(X)}\left(\frac{1}{2}|\nabla u_i|^2-\frac{1}{p-1}u_i^{1-p}\right)2\eta\partial_t\eta+\int_{Q_{2r}^-(X)} |\nabla u_i|^2|\nabla\eta|^2\right]\\
			&\leq Cr^{-2}\int_{Q_{2r}^-(X)}\left[|\nabla u_i|^2+u_i^{1-p}\right]\\
			&\leq C r^{-2}r^{n+\frac{4}{p+1}}=C r^{n-2+\frac{4}{p+1}},
		\end{split}
		\]
		where we have used \eqref{es2} and the fact that $\eta\in C_0^\infty(Q_{2r}^-(X))$ with $r\in(0,1/4)$.

  (4) Testing \eqref{eqn} with $\eta$ gives
  \[\int_{Q_{r/2}^-(X)}u_i^{-p}\leq Cr^{-2} \int_{Q_r^-(X)}u_i.\]
  By the Jensen inequality, we also have
  \[\int_{Q_{r/2}^-(X)}u_i^{-p} \geq \frac{1}{C}r^{-(n+2)(p+1)} \left(\int_{Q_{r/2}^-(X)}u_i\right)^{-p}\geq  \frac{1}{C}r^{-(n+2)(p+1)} \left(\int_{Q_r^-(X)}u_i\right)^{-p}. \]
Combining these two inequalities, we get \eqref{es4}.

	 (5) This is a corollary of the uniform H\"{o}lder continuity of $(u_i)$.
	\end{proof}

	By inequalities \eqref{es2}, \eqref{es3} and \eqref{es5}, we can assume that, up to a subsequence, $u_i\to u_{\infty}$ uniformly in $Q_1^-$, $\partial_tu_i\rightharpoonup\partial_tu_{\infty}$ and $\nabla u_i\rightharpoonup\nabla u_{\infty}$ weakly in $L^2(Q_1^-)$. By this uniform convergence, $u_\infty$ satisfies \eqref{es4} and \eqref{es5}. Furthermore, for any domain $\mathcal{D} \subset\subset \{u_\infty>0\}\cap Q_1^-$ and $k\geq 1$, by standard parabolic estimates and Arzel\`a-Ascoli theorem,  $u_i$ converges to $u_\infty$ in $C^k(\mathcal{D})$.
	
	The remaining proof of Theorem \ref{thm 1.6} is divided  into the following lemmas.
	\begin{lem}\label{lemma 4.3}
		$\mathcal{P}^{n+\alpha}\left(\{u_\infty=0\}\cap Q_1^-\right)=0.$
	\end{lem}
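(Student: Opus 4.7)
The strategy is to upgrade the uniform $L^1$ estimate \eqref{es1} on $u_i^{-p}$ to $L^1$-integrability of a negative power of the parabolic distance to $E := \{u_\infty = 0\} \cap Q_1^-$, and then run a Vitali-type covering argument. Throughout, let $d(X, E) := \inf_{Y \in E} \delta(X, Y)$ denote the parabolic distance to $E$, and $N_s(E) := \{X : d(X, E) < s\}$ its parabolic $s$-neighborhood.

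First, because $u_i \to u_\infty$ uniformly in $Q_1^-$, one has $\liminf_i u_i^{-p} = +\infty$ almost everywhere on $E$. Combined with the uniform bound $\int_{Q_1^-} u_i^{-p} \leq C$ (which follows by covering $Q_1^-$ by finitely many cylinders of radius $\leq 1/2$ and applying \eqref{es1}), Fatou's lemma forces $|E| = 0$ in Lebesgue measure, and a second application gives $u_\infty^{-p} \in L^1(Q_1^-)$. Because $u_\infty \equiv 0$ on $E$, the global $\alpha$-Hölder regularity of $u_\infty$ implies $u_\infty(X) \leq C\, d(X, E)^\alpha$ pointwise, and hence
\[
\int_{Q_{3/2}^-} d(X, E)^{-\alpha p}\, dX \leq C\int_{Q_{3/2}^-} u_\infty^{-p}\, dX < +\infty.
\]

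Set $F(s) := |N_s(E) \cap Q_{3/2}^-|$, which is non-decreasing in $s$. A layer-cake formula (with the substitution $\lambda = s^{-\alpha p}$) rewrites the above integrability as $\int_0^\infty F(s)\, s^{-\alpha p - 1}\, ds < +\infty$. Comparing $F(s)/s^{\alpha p}$ with the tail integral $\int_s^{2s} F(\tau)\, \tau^{-\alpha p - 1}\, d\tau$ and using monotonicity of $F$ yields the pointwise decay $F(s)/s^{\alpha p} \to 0$ as $s \to 0$.

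For the conclusion, fix a small $\epsilon > 0$ and pick a maximal $\epsilon$-separated family $\{X_i\} \subset E$. The symmetric parabolic cylinders $Q_{\epsilon/2}(X_i)$ are pairwise disjoint and each is contained in $N_\epsilon(E) \cap Q_{3/2}^-$, so $\#\{X_i\} \leq C\, F(\epsilon)/\epsilon^{n+2}$. By maximality, $\{Q_\epsilon(X_i)\}$ is a cover of $E$ with all centers in $E$, so using the key identity $n + \alpha - (n+2) = -\alpha p$,
\[
\mathcal{P}^{n+\alpha}_\epsilon(E) \leq \#\{X_i\} \cdot \epsilon^{n+\alpha} \leq C\, F(\epsilon)/\epsilon^{\alpha p} \longrightarrow 0 \quad \text{as } \epsilon \to 0,
\]
which gives $\mathcal{P}^{n+\alpha}(E) = 0$. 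The most delicate step is the upgrade from the $L^1$-integrability of $s \mapsto F(s) s^{-\alpha p - 1}$ to the pointwise smallness $F(s) = o(s^{\alpha p})$; the monotonicity of $F$ is essential, since without it only an averaged decay would be available, which is not enough to conclude zero $(n+\alpha)$-dimensional Hausdorff measure.
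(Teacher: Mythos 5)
Your proof is correct, and while it shares the paper's basic skeleton (Fatou's lemma to get $u_\infty^{-p}\in L^1$, the H\"{o}lder bound for $u_\infty$ near its zero set, a maximal $\epsilon$-separated cover, and the exponent identity $n+2-\alpha p=n+\alpha$), the way you extract the smallness is genuinely different. The paper first proves that $\{u_\infty=0\}$ is Lebesgue-null using the nondegeneracy estimate \eqref{es4} together with the Lebesgue differentiation theorem, then gets $\sum_i\int_{Q^-_{\epsilon/2}(X_i)}u_\infty^{-p}\to 0$ by absolute continuity of the integral over the shrinking neighborhoods, with the per-cylinder lower bound $\int_{Q^-_{\epsilon/2}(X_i)}u_\infty^{-p}\geq c\,\epsilon^{n+\alpha}$ coming from $u_\infty\leq C\epsilon^{\alpha}$ there. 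You never use \eqref{es4}: the Lebesgue-null statement drops out of Fatou, and the quantitative decay $F(s)=o(s^{\alpha p})$ of the neighborhood volumes is obtained from the integrability of $d(\cdot,E)^{-\alpha p}$ via layer-cake plus monotonicity, after which the counting is purely volumetric. This buys a slightly stronger intermediate statement (a Minkowski-content-type decay, which implies the $\mathcal{P}^{n+\alpha}$-null conclusion), at the cost of the layer-cake computation; the paper's route is softer but needs the extra nondegeneracy input. Two bookkeeping points, both at the same level of informality as the paper's own write-up: the symmetric cylinders $Q_{\epsilon/2}(X_i)$ are not contained in $Q^-_{3/2}$ when $t_i$ is close to $0$, so use the backward halves $Q^-_{\epsilon/2}(X_i)$ (still pairwise disjoint, of volume comparable to $\epsilon^{n+2}$) for the containment in $N_\epsilon(E)\cap Q^-_{3/2}$; and since \eqref{es1} is stated for centers in $Q_1^-$, you should either run the argument on an intermediate cylinder such as $Q^-_{9/8}$ or note that the proof of \eqref{es1} applies verbatim to centers in any cylinder compactly contained in $Q^-_{3/2}$.
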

	\begin{proof}
		By inequality (\ref{es4}),
		\[
		\sup_{Q_{r}^-(X)} u_\infty\geq \tfrac{1}{C} r^{\frac{2}{p+1}} \quad \text{for any}~X\in\{u_\infty=0\}\cap Q_1^-~\text{and}~r\in\left(0,\tfrac{1}{2}\right).
		\]
		By the H\"{o}lder continuity of $u_\infty$, there exists a $Q_{\sigma r}^-(Y)\subset Q_r^-(X)$, where $\sigma$ depends only on the H\"{o}lder semi-norm of $u_\infty$, such that
		\[
		u_\infty\geq C r^{\frac{2}{p+1}} \quad \text{in}~Q_{\sigma r}^-(Y).
		\]
		This means for any $X\in \{u_\infty=0\}\cap Q_1^-$ and $r\in(0,\frac{1}{2})$, we can find
		\[
		Q_{\sigma r}^-(Y)\subset\{u_\infty>0\}\cap Q_1^-.
		\]
		As a consequence, the $\mathcal{P}^{n+2}$-density of $\{u_\infty=0\}$ at $X$ is strictly less than $1$. Since $\mathcal{P}^{n+2}$ is equivalent to the standard Lebesgue measure on $\R^{n+1}$, by the Lebesgue differentiation theorem, we deduce that $\mathcal{P}^{n+2}(\{u_\infty=0\}\cap Q_1^-)=0$.
		
		In any compact subset of $\{u_\infty>0\}\cap Q_1^-$, $u_i^{-p}$ converges to $u_\infty^{-p}$ uniformly, hence $u_i^{-p}\to u_\infty^{-p}$,  $\mathcal{P}^{n+2}$ $a.e.$ in $Q_1^-$. By Fatou's lemma,
		\[
		\int_{Q_1^-} u_\infty^{-p}\leq \liminf\limits_{i\to\infty}\int_{Q_1^-} u_i^{-p}\leq C.
		\]
		
		For any $\varepsilon>0$, take a maximal $\varepsilon$-separated set
		\[
		\{X_i\}_{i=1}^N\subset \{u_\infty=0\}\cap Q_1^-.
		\]
		Then $\{Q_{\varepsilon/2}(X_i)\}$ are disjoint. Moreover, by denoting $\mathcal{N}_\varepsilon$ to be the $\varepsilon$-neighborhood of $\{u_\infty=0\}$ in $Q_1$, we have $\{u_\infty=0\}\subset \cup_{i=1}^N Q_{\varepsilon}(X_i)\subset\mathcal{N}_\varepsilon$.
		Because
		\[\lim_{\varepsilon\to 0}\mathcal{P}^{n+2}(\mathcal{N}_\varepsilon)=\mathcal{P}^{n+2}(\{u_\infty=0\})=0,\]
		by the continuity of the Lebesgue integral, we have
		\begin{equation}\label{tt1}
			\sum_{i=1}^{N}\int_{Q_{\varepsilon/2}^-(X_i)} u_\infty^{-p}\leq \int_{\mathcal{N}_\varepsilon\cap Q_1^-} u_\infty^{-p}\to0 \quad \text{as}~\varepsilon\to0^+.
		\end{equation}
		Since $X_i\in\{u_\infty=0\}$ and $u_\infty$ is H\"{o}lder continuous,
		\[
		\sup\limits_{Q_{\varepsilon/2}^-(X_i)} u_\infty\leq C \varepsilon^{\alpha}.
		\]
		This implies that
		\begin{equation}\label{tt2}
			\int_{Q_{\varepsilon/2}^-(X_i)} u_\infty^{-p}\geq C \varepsilon^{n+2-\frac{2p}{p+1}}=C \varepsilon^{n+\alpha}.
		\end{equation}
		Plugging \eqref{tt1} into \eqref{tt2} gives us
		\[
		\sum_{i=1}^{N}\varepsilon^{n+\alpha}\leq C \sum_{i=1}^{N}\int_{Q_{\varepsilon/2}^-(x_i,t_i)} u_\infty^{-p}\to 0 \quad \text{as} ~~ \varepsilon\to0^+.
		\]
		Letting $\varepsilon\to0^+$, we conclude that $\mathcal{P}^{n+\alpha}\left(\{u_\infty=0\}\cap Q_1^-\right)=0$.
	\end{proof}
	
	\begin{lem}\label{lemma 4.4}
		The  limit $u_\infty$ satisfies estimates \eqref{es1}-\eqref{es5}.
	\end{lem}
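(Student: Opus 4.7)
The plan is to pass each of the five inequalities for $u_i$ to the limit $u_\infty$ using the three modes of convergence already established: uniform convergence on $Q_1^-$, weak $L^2$ convergence of $\nabla u_i$ and $\partial_t u_i$, and $C^k_{\text{loc}}$ convergence on $\{u_\infty>0\}\cap Q_1^-$, together with the fact that $\{u_\infty=0\}\cap Q_1^-$ has zero $(n+2)$-Lebesgue measure by Lemma~\ref{lemma 4.3}.

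First I would dispose of the two easy estimates, \eqref{es4} and \eqref{es5}: both are preserved by uniform convergence since \eqref{es4} is just the limit of $\int_{Q_r^-(X)} u_i$, while \eqref{es5} is the pointwise definition of $\alpha$-H\"{o}lder continuity. Next, for the derivative bounds in \eqref{es2} and \eqref{es3}, I would use lower semi-continuity of the $L^2$ norm under weak convergence: applying the weak $L^2$ convergence of $\nabla u_i\rightharpoonup \nabla u_\infty$ and $\partial_t u_i\rightharpoonup \partial_t u_\infty$ on the measurable subset $Q_r^-(X)\subset Q_1^-$ immediately yields
\[\int_{Q_r^-(X)}|\nabla u_\infty|^2 \leq \liminf_{i\to\infty}\int_{Q_r^-(X)}|\nabla u_i|^2, \qquad \int_{Q_r^-(X)}(\partial_t u_\infty)^2 \leq \liminf_{i\to\infty}\int_{Q_r^-(X)}(\partial_t u_i)^2,\]
which upgrades \eqref{es2}--\eqref{es3} from $u_i$ to $u_\infty$.

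The main obstacle, and the only step that requires more than abstract functional analysis, is transferring the singular integrals $\int u_i^{-p}$ and $\int u_i^{1-p}$ to $u_\infty$ in \eqref{es1} and \eqref{es2}. The issue is that $u_i\to u_\infty$ may send $u_i$ to $0$ along points where the integrand blows up. My approach is to apply Fatou's lemma after establishing pointwise convergence almost everywhere. The uniform convergence $u_i\to u_\infty$, combined with the smooth convergence on $\{u_\infty>0\}\cap Q_1^-$, gives $u_i^{-p}\to u_\infty^{-p}$ and $u_i^{1-p}\to u_\infty^{1-p}$ pointwise on $\{u_\infty>0\}\cap Q_1^-$; on $\{u_\infty=0\}\cap Q_1^-$ the integrands may diverge, but Lemma~\ref{lemma 4.3} gives $\mathcal{P}^{n+\alpha}(\{u_\infty=0\}\cap Q_1^-)=0$ and hence this set has $(n+2)$-Lebesgue measure zero. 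Therefore the pointwise liminf of $u_i^{-p}$ (resp.\ $u_i^{1-p}$) equals $u_\infty^{-p}$ (resp.\ $u_\infty^{1-p}$) almost everywhere, and Fatou's lemma yields
\[\int_{Q_r^-(X)} u_\infty^{-p} \leq \liminf_{i\to\infty}\int_{Q_r^-(X)} u_i^{-p}\leq Cr^{n+\frac{2}{p+1}},\]
and similarly for $u_\infty^{1-p}$; the latter combines with the already-proved gradient bound to give the full \eqref{es2}.

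In summary, the routine ingredients are weak $L^2$ lower semi-continuity and uniform convergence, and the one nontrivial input is Lemma~\ref{lemma 4.3}, which guarantees that the possibly bad set $\{u_\infty=0\}$ is negligible and thus Fatou applied to the pointwise convergence on $\{u_\infty>0\}$ is sufficient. No new analytic estimate beyond those already recorded in Lemma~\ref{lemma 4.2} and Lemma~\ref{lemma 4.3} is needed.
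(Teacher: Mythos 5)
Your proposal is correct and follows essentially the same route as the paper: estimates \eqref{es4}--\eqref{es5} pass to the limit by uniform convergence, the gradient and time-derivative bounds in \eqref{es2}--\eqref{es3} by weak $L^2$ lower semi-continuity, and the singular terms in \eqref{es1}--\eqref{es2} by Fatou's lemma together with the a.e.\ pointwise convergence of $u_i^{-p}$ and $u_i^{1-p}$, which is licensed by Lemma \ref{lemma 4.3} since $\mathcal{P}^{n+\alpha}$-negligibility of $\{u_\infty=0\}$ implies Lebesgue negligibility. No gap.
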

	\begin{proof}
		We only need to show \eqref{es1}-\eqref{es3}. Since $u_i^{-1}\to u_\infty^{-1}$ $\mathcal{P}^{n+2}$ a.e. in $Q_1^-$, these estimates follow immediately from Fatou's lemma and weak convergence of  $\partial_tu_i\rightharpoonup\partial_tu_{\infty}$ and $\nabla u_i\rightharpoonup\nabla u_{\infty}$ in $L^2_{loc}(Q_{3/2}^-)$.
	\end{proof}
	
	\begin{lem}\label{lemma 4.5}
		$u_i^{-p}$ converges to $u_\infty^{-p}$ in $L^1(Q_1^-)$.
	\end{lem}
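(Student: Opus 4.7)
The plan is to combine the pointwise almost-everywhere convergence $u_i^{-p} \to u_\infty^{-p}$ (noted just after the proof of Lemma \ref{lemma 4.3}) with equi-integrability of $(u_i^{-p})$ near the rupture set. In effect this is Vitali's convergence theorem, and the core work is producing, for every $\varepsilon > 0$, an open neighborhood $\mathcal{N}$ of $\{u_\infty = 0\} \cap Q_1^-$ on which $\int_{\mathcal{N}} u_i^{-p} \leq C\varepsilon$ uniformly in $i$, with the same bound for $u_\infty^{-p}$.

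Step 1 (equi-integrability near the rupture set). Fix $\varepsilon > 0$. Because $\mathcal{P}^{n+\alpha}(\{u_\infty = 0\} \cap Q_1^-) = 0$ by Lemma \ref{lemma 4.3}, the definition of parabolic Hausdorff measure yields a finite family of parabolic cylinders $\{Q_{r_j}(X_j)\}_{j=1}^N$ with centers $X_j \in \{u_\infty = 0\} \cap Q_1^-$, radii $r_j < 1/4$, covering the rupture set and satisfying $\sum_j r_j^{n+\alpha} < \varepsilon$. Since $Q_{r_j}(X_j) \subset Q_{r_j}^-(x_j, t_j + r_j^2)$, the uniform estimate \eqref{es1} applied to each shifted backward cylinder gives $\int_{Q_{r_j}(X_j)} u_i^{-p} \leq C r_j^{n+\alpha}$, uniformly in $i$. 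Setting $\mathcal{N} := \bigcup_j Q_{r_j}(X_j)$ and summing,
\[
\int_{\mathcal{N} \cap Q_1^-} u_i^{-p} \;\leq\; C \sum_j r_j^{n+\alpha} \;<\; C \varepsilon.
\]
By Lemma \ref{lemma 4.4}, the same bound holds with $u_i^{-p}$ replaced by $u_\infty^{-p}$.

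Step 2 (uniform convergence off the rupture set). The complement $K := \overline{Q_1^-} \setminus \mathcal{N}$ is a compact subset of $\{u_\infty > 0\}$, so by continuity there exists $c > 0$ with $u_\infty \geq c$ on $K$. From the locally smooth convergence $u_i \to u_\infty$ on $\{u_\infty > 0\} \cap Q_1^-$ (recorded just after the proof of Lemma \ref{lemma 4.2}), it follows that $u_i \to u_\infty$ uniformly on $K$ and hence $u_i^{-p} \to u_\infty^{-p}$ uniformly on $K$. Combining with Step 1,
\[
\int_{Q_1^-} |u_i^{-p} - u_\infty^{-p}| \;\leq\; \int_K |u_i^{-p} - u_\infty^{-p}| \;+\; \int_{\mathcal{N} \cap Q_1^-} \bigl(u_i^{-p} + u_\infty^{-p}\bigr) \;\leq\; o_i(1) + 2C\varepsilon,
\]
and letting $i \to \infty$ followed by $\varepsilon \to 0$ delivers the claim.

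The only delicate point is conceptual rather than technical: the scaling $n + \alpha = n + 2/(p+1)$ in the uniform bound \eqref{es1} matches precisely the parabolic Hausdorff dimension shown for $\{u_\infty = 0\}$ in Lemma \ref{lemma 4.3}, which is what makes the cover argument in Step 1 produce a small integral (rather than just a small measure). Everything else is a routine decomposition into a "good" region where smooth convergence holds and a controllably small "bad" region around the rupture set.
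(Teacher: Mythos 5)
Your argument is essentially the paper's own proof: cover $\{u_\infty=0\}\cap Q_1^-$ by parabolic cylinders with $\sum_j r_j^{n+\alpha}<\varepsilon$ using Lemma \ref{lemma 4.3}, control $\int u_i^{-p}$ over that neighborhood uniformly in $i$ via \eqref{es1} (and likewise for $u_\infty^{-p}$), and use the locally smooth convergence away from $\{u_\infty=0\}$ on the complement before sending $\varepsilon\to0$. The only slip is the claimed containment $Q_{r_j}(X_j)\subset Q_{r_j}^-(x_j,t_j+r_j^2)$, which fails as written (the time lengths are $2r_j^2$ versus $r_j^2$, and the shifted center may leave the domain when $t_j+r_j^2>0$); as in the paper, one should instead pass to backward cylinders of comparable but larger radius (e.g. $Q_{4r_j}^-$ centered at suitably chosen points of $\{u_\infty=0\}\cap Q_1^-$), which only changes the constant and does not affect the argument.
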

	\begin{proof}
		By Lemma \ref{lemma 4.3}, for any $\varepsilon>0$ there exists a family of parabolic cylinders $\{Q_{r_k}(X_k)\}$ with $r_k\leq \varepsilon$, which covers $\{u_\infty=0\}\cap Q_1^-$ and satisfies $\sum_k r_k^{n+\alpha}\leq \varepsilon$. For each $k$, choose $Y_k\in\{u_\infty=0\}\cap Q_1^-\cap Q_{r_k}(X_k)$ such that $\mathcal{N}^-:=\cup_kQ_{4r_k}^-(Y_k)$ is an open backward neighborhood of $\{u_\infty=0\}\cap Q_1^-$. Then by the smooth convergence of $u_i$ outside $\{u_\infty=0\}$,
		\begin{equation}\label{ss1}
			\lim\limits_{i\to\infty}\int_{Q_1^-\setminus\mathcal{N}^-} \left|u_i^{-p}-u_\infty^{-p}\right|=0.
		\end{equation}
		Concerning the integral in $\mathcal{N}^-$, inequality \eqref{es1} gives
		\begin{equation}\label{ss2}
			\int_{\mathcal{N}^-} u_i^{-p}\leq \sum_k \int_{Q_{4r_k}^-(Y_k)} u_i^{-p}\leq\sum_k (4r_k)^{n+\alpha}\leq C\varepsilon.
		\end{equation}
		The same inequality holds for $u_\infty^{-p}$ (either by the same argument or simply using Fatou's lemma).
		
		Combining \eqref{ss1} and \eqref{ss2} leads to
		\[
		\limsup_{i\to\infty}\int_{Q_1^-} \left|u_\infty^{-p}-u_i^{-p}\right|\leq C\varepsilon.
		\]
		Sending $\varepsilon\to0$, we complete the proof.
	\end{proof}
	\begin{coro}\label{re 4.6}
		$u_\infty$ is an $L^1$ weak solution of \eqref{eqn} in $Q_1^-$.
	\end{coro}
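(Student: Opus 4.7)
The plan is straightforward: Definition \ref{def 1.4} requires three things, namely $u_\infty\geq 0$, $u_\infty\in L^1_{loc}$ and $u_\infty^{-p}\in L^1_{loc}$, together with the distributional validity of \eqref{eqn}. The first two properties are immediate from the nonnegativity of the $u_i$ and the uniform convergence $u_i\to u_\infty$ on $Q_1^-$ established just before Lemma \ref{lemma 4.3}. The local integrability of $u_\infty^{-p}$ follows directly from Lemma \ref{lemma 4.4}, which hands us estimate \eqref{es1} for $u_\infty$ itself; alternatively it is a consequence of the $L^1$ convergence in Lemma \ref{lemma 4.5} via Fatou.

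The only substantive point is the distributional equation. I would fix an arbitrary test function $\varphi\in C_0^\infty(Q_1^-)$ and start from the identity
\begin{equation*}
\int_{Q_1^-}\bigl(-u_i\,\partial_t\varphi-u_i\,\Delta\varphi+u_i^{-p}\varphi\bigr)=0,
\end{equation*}
which holds since each $u_i$ is an $L^1$ weak solution of \eqref{eqn} in the sense of Definition \ref{def 1.4} (this is part of being a suitable weak solution, cf. Definition \ref{def 1.5}(i)). The first two integrals pass to the limit because $u_i\to u_\infty$ uniformly on the support of $\varphi$, while the third converges to $\int u_\infty^{-p}\varphi$ by the $L^1(Q_1^-)$ convergence from Lemma \ref{lemma 4.5}. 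Hence
\begin{equation*}
\int_{Q_1^-}\bigl(-u_\infty\,\partial_t\varphi-u_\infty\,\Delta\varphi+u_\infty^{-p}\varphi\bigr)=0
\end{equation*}
for every $\varphi\in C_0^\infty(Q_1^-)$, which is precisely \eqref{eqn} in the distributional sense.

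There is no real obstacle here; the only thing one must be careful about is that the nonlinear term $u_i^{-p}$ is not controlled pointwise near $\{u_\infty=0\}$, and therefore the na\"ive dominated convergence theorem would fail. This is exactly why Lemma \ref{lemma 4.5} was proved first: its strong $L^1$ convergence bypasses that difficulty and allows the limit to be taken in the singular term. Once that is invoked, the corollary is a one-line consequence of the preceding lemmas.
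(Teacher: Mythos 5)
Your argument is correct and is exactly the implicit reasoning behind the paper's (proof-free) corollary: pass to the limit in the distributional formulation, using uniform convergence $u_i\to u_\infty$ for the linear terms and the strong $L^1$ convergence $u_i^{-p}\to u_\infty^{-p}$ of Lemma \ref{lemma 4.5} for the singular term, with $u_\infty^{-p}\in L^1$ already supplied by Lemma \ref{lemma 4.4}. Nothing to add.
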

	This corollary will be used in the following lemma to show the strong convergence of $\nabla u_i$ in $L^2(Q_1^-)$.
	\begin{lem}\label{lemma 4.7}
		As $i\to\infty$, we have
		\[
		\nabla u_i\to \nabla u_\infty~in~L^2(Q_1^-)~\text{and}~u_i^{1-p}\to u_\infty^{1-p}~in~L^1(Q_1^-).
		\]
	\end{lem}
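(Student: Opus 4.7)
My plan is to prove the two convergences separately, handling the easier $L^1$ convergence of $u_i^{1-p}$ first, since it feeds into the gradient convergence. Using the decomposition
\[
u_i^{1-p}-u_\infty^{1-p} \;=\; u_i\bigl(u_i^{-p}-u_\infty^{-p}\bigr) \;+\; (u_i-u_\infty)\,u_\infty^{-p},
\]
the uniform $L^\infty$ bound on $(u_i)$ (from \eqref{uniform Holder assumption}), the uniform convergence $u_i\to u_\infty$, and the $L^1$ convergence $u_i^{-p}\to u_\infty^{-p}$ from Lemma \ref{lemma 4.5} (together with $u_\infty^{-p}\in L^1_{loc}$ from Lemma \ref{lemma 4.4}) combine to give $u_i^{1-p}\to u_\infty^{1-p}$ in $L^1(Q_1^-)$ at once.

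For the strong $L^2$ convergence of $\nabla u_i$, I use the standard trick of comparing test-function identities for $u_i$ and $u_\infty$. Fix a cutoff $\eta\in C_0^\infty(Q_{3/2}^-)$ with $\eta\equiv 1$ on $Q_1^-$. Since $u_i$ is an $L^1$-weak solution with $\nabla u_i,\partial_t u_i\in L^2_{loc}$, and since $u_i\eta^2\in H^1$ has compact support with $u_i^{1-p}\eta^2\in L^1$, testing the equation against $u_i\eta^2$ and integrating by parts in space and time yields
\[
\int |\nabla u_i|^2\eta^2 \;=\; \int u_i^2\,\eta\,\partial_t\eta \;-\; 2\int u_i\eta\,\nabla u_i\cdot\nabla\eta \;-\; \int u_i^{1-p}\eta^2.
\]
Every term on the right has a clear limit: the first by uniform convergence of $u_i$, the second by the weak $L^2$ convergence of $\nabla u_i$ paired with the strong $L^2$ convergence of $u_i\eta\nabla\eta$, and the third by the $L^1$ step above. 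Writing the analogous identity for $u_\infty$ (which holds by testing the equation from Corollary \ref{re 4.6} with $u_\infty\eta^2$) forces $\int|\nabla u_i|^2\eta^2\to\int|\nabla u_\infty|^2\eta^2$. Then the expansion
\[
\int|\nabla(u_i-u_\infty)|^2\eta^2 \;=\; \int|\nabla u_i|^2\eta^2 \;-\; 2\int\nabla u_i\cdot\nabla u_\infty\,\eta^2 \;+\; \int|\nabla u_\infty|^2\eta^2,
\]
together with $\nabla u_i\rightharpoonup\nabla u_\infty$ in $L^2$, gives $\nabla u_i\to\nabla u_\infty$ in $L^2(Q_1^-)$.

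The main obstacle is the rigorous justification of the testing identity for $u_\infty$, which is only an $L^1$-weak solution rather than a smooth one. The $H^1$-regularity of $u_\infty\eta^2$ is immediate from $u_\infty\in L^\infty$ (by the uniform Hölder bound) and $\nabla u_\infty\in L^2_{loc}$; the needed integrability $u_\infty^{-p}\cdot(u_\infty\eta^2)=u_\infty^{1-p}\eta^2\in L^1$ comes from Lemma \ref{lemma 4.4}; and $\partial_t u_\infty\in L^2_{loc}$ makes the time-derivative term well-defined. A standard mollification argument on $u_\infty\eta^2$ then upgrades the distributional equation to the desired identity. Alternatively, passing to the limit in the identity for $u_i$ already produces the correct value of $\int|\nabla u_\infty|^2\eta^2$ as an upper bound, which matched against weak lower semi-continuity of the $L^2$ norm forces equality, bypassing the delicate testing step for $u_\infty$.
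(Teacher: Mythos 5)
Your proof is correct and takes essentially the same route as the paper: the $L^1$ statement follows from the uniform convergence of $u_i$ together with the uniform $L^1$ control of $u_i^{-p}$ (your product decomposition versus the paper's elementary inequality $|t^{1-p}-s^{1-p}|\le C(p)|t-s|(t^{-p}+s^{-p})$ is an immaterial variation), and the gradient statement is obtained exactly as in the paper by testing with $u_i\eta^2$, passing to the limit, matching against the same identity for $u_\infty$ (legitimized by Corollary \ref{re 4.6}, which you justify more carefully via mollification than the paper's ``the same process yields''), and then upgrading weak to strong $L^2$ convergence. The only caveat is your closing ``alternative'': weak lower semicontinuity only yields $\int|\nabla u_\infty|^2\eta^2\le\lim_i\int|\nabla u_i|^2\eta^2$, and identifying that limit with $\int|\nabla u_\infty|^2\eta^2$ is precisely what the testing identity for $u_\infty$ provides, so that aside does not bypass the step --- but since it is only an aside, your main argument stands.
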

	\begin{proof}
By the elementary inequality $|t^{1-p}-s^{1-p}|\leq C(p)|t-s|(s^{-p}+t^{-p})$,  we  get
		\[
		\begin{split}
			\int_{Q_1^-}|u_i^{1-p}-u_\infty^{1-p}|\leq C \sup\limits_{Q_1^-}|u_i-u_\infty|\int_{Q_1^-}|u_i^{-p}+u_\infty^{-p}|\leq C\sup\limits_{Q_1^-}|u_i-u_\infty|,
		\end{split}
		\]
		which tends to zero by the uniform convergence of $u_i$ to $u_\infty$ in $Q_1^-$.

		Next we take an $\eta\in C^\infty_0(Q_{3/2}^-)$ with $\eta\equiv 1$ in $(Q_1^-)^\circ$. Testing the equation of $u_i$ by $u_i\eta^2$ and integrating by parts, we get
		\[
		\begin{split}
			\int_{Q_{3/2}^-}\left(\partial_t u_i u_i+|\nabla u_i|^2+u_i^{1-p}\right)\eta^2=\int_{Q_{3/2}^-} u_i^2 \Delta\frac{\eta^2}{2}.
		\end{split}
		\]
		Letting $i\to\infty$ yields
		\begin{equation}\label{5.1}
		\lim\limits_{i\to\infty}\int_{Q_{3/2}^-}|\nabla u_i|^2\eta^2+\int_{Q_{3/2}^-}\left(\partial_t u_\infty u_\infty+u_\infty^{1-p}\right)\eta^2=\int_{Q_{3/2}^-} u_\infty^2 \Delta\frac{\eta^2}{2},
\end{equation}
		where we have used the strong convergence of $u_i^{1-p}$ to $u_\infty^{1-p}$ in $L^1_{loc}(Q_{3/2}^-)$ and the weak convergence of  $\partial_tu_i\rightharpoonup\partial_tu_{\infty}$ in $L^2_{loc}(Q_{3/2}^-)$.
		Since $u_\infty$ is a $L^1$ weak solution of \eqref{eqn}, the same process yields
		\[
		\int_{Q_{3/2}^-} |\nabla u_\infty|^2\eta^2+\int_{Q_{3/2}^-} \left(\partial_t u_\infty u_\infty+u_\infty^{1-p}\right)\eta^2=\int_{Q_{3/2}^-} u_\infty^2 \Delta\frac{\eta^2}{2}.
		\]
Combining this equation with \eqref{5.1} gives the strong convergence
		\[
		\lim\limits_{i\to\infty}\int_{Q_1^-}|\nabla u_i|^2=\int_{Q_1^-}|\nabla u_\infty|^2. \qedhere
		\]
	\end{proof}
	
	\begin{coro}\label{coro 4.8}
		The limit function $u_\infty$ is a suitable weak solution of \eqref{eqn} in $Q_1^-$.
	\end{coro}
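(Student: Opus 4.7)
The plan is to verify directly the three conditions of Definition \ref{def 1.5} for $u_\infty$. Condition (i) is already recorded as Corollary \ref{re 4.6}, so the task reduces to checking the stationarity condition (ii) and the localized energy inequality (iii), and in both cases the strategy is to pass to the limit in the corresponding identities/inequalities for $u_i$ using the convergence results from Lemmas \ref{lemma 4.5} and \ref{lemma 4.7}.

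For (ii), I would fix $Y \in C_0^\infty(Q_1^-, \R^n)$ and pass to the limit term by term in the stationary identity \eqref{stationary condition} for $u_i$. The quadratic terms $\int \tfrac{1}{2}|\nabla u_i|^2\,\mathrm{div}\,Y$ and $\int DY(\nabla u_i,\nabla u_i)$ converge to the corresponding expressions for $u_\infty$ because $\nabla u_i \to \nabla u_\infty$ strongly in $L^2(Q_1^-)$ by Lemma \ref{lemma 4.7}. The nonlinear term $\int u_i^{1-p}\,\mathrm{div}\,Y/(p-1)$ passes by the $L^1$ convergence $u_i^{1-p}\to u_\infty^{1-p}$, also from Lemma \ref{lemma 4.7}. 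Finally, the cross term $\int \partial_t u_i\,(\nabla u_i\cdot Y)$ is a pairing of a weakly convergent factor $\partial_t u_i \rightharpoonup \partial_t u_\infty$ in $L^2$ against a strongly convergent factor $\nabla u_i \to \nabla u_\infty$ in $L^2$, and so it also converges to its limit counterpart.

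For (iii), I would rewrite the distributional inequality \eqref{localized energy inequality} by testing against an arbitrary non-negative $\psi \in C_0^\infty(\R)$, yielding
\[
-\int_\R \psi'(t) \int_{\R^n}\!\!\left(\frac{|\nabla u_i|^2}{2}-\frac{u_i^{1-p}}{p-1}\right)\!\eta^2\,dx\,dt \;\leq\; \int_\R \psi(t)\, R_i(t)\,dt,
\]
where $R_i(t)$ denotes the right-hand side of \eqref{localized energy inequality} for $u_i$. The left-hand side passes to the limit thanks to the strong $L^1$ convergence of $|\nabla u_i|^2$ (which follows from the strong $L^2$ convergence of $\nabla u_i$ via $|a|^2-|b|^2=(a-b)\cdot(a+b)$ and Cauchy--Schwarz) and of $u_i^{1-p}$. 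On the right, the two ``potential'' terms paired with $\eta\,\partial_t\eta$ converge by the same strong convergences, and the cross term $\int \partial_t u_i(\nabla u_i\cdot \nabla\eta)\eta$ converges by the weak-times-strong argument used above. The only delicate term is $-\int |\partial_t u_i|^2\eta^2$: here only weak $L^2$ convergence is available, but since the sign is negative, the weak lower semicontinuity of the $L^2$ norm gives
\[
\limsup_{i\to\infty}\left(-\int \psi(t)|\partial_t u_i|^2\eta^2\right) \;\leq\; -\int \psi(t)|\partial_t u_\infty|^2\eta^2,
\]
which preserves the inequality in the correct direction.

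The main subtlety, and the reason why Lemma \ref{lemma 4.7} is indispensable rather than just the weak convergences, is that the stationarity condition (ii) is an \emph{equality} with quadratic dependence on $\nabla u$, so strong $L^2$ convergence of the gradients is needed to pass it to the limit. The $-|\partial_t u|^2$ term in (iii) is the only place where the limit is forced to be strictly inequality-preserving rather than an identity, and this is exactly compatible with the distributional inequality defining suitable weak solutions.
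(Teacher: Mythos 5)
Your proposal is correct and follows essentially the same route the paper takes: verify the three conditions of Definition \ref{def 1.5} for $u_\infty$ by passing to the limit in the corresponding identity/inequality for $u_i$, using the strong $L^2$ convergence of $\nabla u_i$, the $L^1$ convergence of $u_i^{1-p}$, and the weak $L^2$ convergence of $\partial_t u_i$ from Lemma \ref{lemma 4.7} (with the $-|\partial_t u_i|^2$ term handled by weak lower semicontinuity of the $L^2$ norm). The paper states this in one sentence; you supply the term-by-term bookkeeping, which is sound.
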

	\begin{proof}
		Since $u_i$ is stationary in the sense that for for any  $Y\in C_0^{\infty}(\mathbb R^n,\mathbb R^n)$,
		\[
		\begin{split}
			\int \left(\frac{1}{2}|\nabla u_i|^2-\frac{1}{p-1}u_i^{1-p}\right)div Y-DY(\nabla u_i,\nabla u_i)-\partial_tu_i(\nabla u_i\cdot Y)=0,
		\end{split}
		\]
		we let $i\to\infty$ and use the convergence results in the previous lemma to conclude that $u_\infty$ is also stationary. The energy inequality \eqref{localized energy inequality} can be obtained by testing \eqref{localized energy inequality} with $\eta\in C_0^\infty(\R)$ and using these convergence results again. Thus $u_\infty$ is a suitable weak solution of \eqref{eqn} in $Q_1^-$.
	\end{proof}
	This corollary completes the  proof of Theorem \ref{thm 1.6}.

	\section{Monotonicity formula}\label{section 5}
	\setcounter{equation}{0}
	
	In this section,  $u$ always denotes a $C^{\alpha,\alpha/2}$ continuous suitable weak solution of \eqref{eqn} in $Q_2^-$.

	Take an  $\eta\in C_0^{\infty}(B_1(0))$ such that $\eta\equiv 1$ in $B_{1/2}$ and $|\nabla\eta|\leq C$.
	Let
	\[G(y,s):=\left(4\pi s\right)^{-\frac{n}{2}}e^{-\frac{|y|^2}{4s}}\]
	be the standard heat kernel on $\R^n$.
	
	For any $(x_0,t_0)\in Q_1^-$ and $s>0$,  define
	\[
	\begin{split}
		E(s;x_0,t_0,u):=&s^{\frac{p-1}{p+1}}\int_{B_2} \left[\frac{1}{2}|\nabla u(x_0+y,t_0-s)|^2-\frac{1}{p-1}u(x_0+y,t_0-s)^{1-p}\right]\\
		& \qquad \qquad \times G(y,s)\eta(x_0+y)dy\\
		&-\frac{1}{2(p+1)}s^{-\frac{2}{p+1}}\int_{B_2} u(x_0+y,t_0-s)^2G(y,s)\eta(x_0+y)dy.
	\end{split}
	\]
	At present, this is only understood as an integrable function of $s$. In the following, we will also use an averaged version of $E$,
	\[\overline{E}(s;x_0,t_0,u):=\frac{1}{s}\int_{s}^{2s}E(\tau;x_0,t_0,u)d\tau, \]
	which is a continuous function of $s$ and $(x_0,t_0)$.
	
	The main result of this section is the following monotonicity formula.
	\begin{prop}\label{pro 5.1}
		For any $(x_0,t_0)\in Q_1^-$ and $s\in(0,1)$, we have
		\begin{eqnarray*}\label{41}
			E^\prime(s;x_0,t_0,u)&\geq& s^{-\frac{p+3}{p+1}}\int \left(\frac{u}{p+1}-\frac{y\cdot\nabla u}{2}+s\partial_tu\right)^2 G(y,s)\eta(x_0+y) dy\\
			&&- Ce^{-\frac{1}{Cs}}g(s) \nonumber
		\end{eqnarray*}
		in the distributional sense, where  $g(s)$ is an integrable function on $(0,1)$, and in the integral $u$ etc. are evaluated at $(x_0+y,t_0-s)$.
	\end{prop}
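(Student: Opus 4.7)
The quantity $E(s;x_0,t_0,u)$ is the natural scale invariant energy at backward parabolic scale $\sqrt{s}$, adapted to the dilation $u_\lambda(x,t)=\lambda^{-2/(p+1)}u(\lambda x,\lambda^2 t)$ that preserves \eqref{eqn}. The plan is to differentiate $E$ distributionally in $s$, to use the heat kernel identities $\partial_s G=\Delta_y G$ and $\nabla_y G=-\tfrac{y}{2s}G$ together with $\partial_s u(x_0+y,t_0-s)=-\partial_t u$ and the equation \eqref{eqn}, and then to complete the square $\left(\tfrac{u}{p+1}-\tfrac{y\cdot\nabla u}{2}+s\partial_t u\right)^2$. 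The cutoff $\eta$ equals $1$ near $0$, so every derivative falling on $\eta$ is supported in $\{|y|\geq 1/2\}$ where $G(y,s)\leq Ce^{-1/(Cs)}$; these boundary contributions form the error term $Ce^{-1/(Cs)}g(s)$.

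\textbf{Core computation.} Write $E=s^{(p-1)/(p+1)}I(s)-\tfrac{s^{-2/(p+1)}}{2(p+1)}J(s)$ with the obvious $I,J$, and differentiate. Three sources of terms arise. First, the explicit $s$ prefactors contribute the combinations $\tfrac{p-1}{p+1}$ and $\tfrac{2}{p+1}$ which, together with the $|y|^2/(4s^2)$ part of $\partial_s G$, conspire to yield the coefficient $\tfrac{1}{p+1}$ multiplying $u$ inside the target square. Second, transferring $\partial_s G=\Delta_y G$ onto the integrand by parts uses $\nabla G=-\tfrac{y}{2s}G$ to produce (after multiplication by the outer factor $s$) the term $\tfrac{y\cdot\nabla u}{2}$. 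Third, differentiating $u(\cdot,t_0-s)$ produces the $s\partial_t u$ terms. In the course of the second step a $\Delta u$ contribution appears; it is eliminated against the $u^{-p}$ coming from differentiating $-u^{1-p}/(p-1)$ by means of \eqref{eqn}, and this cancellation is precisely what makes the outcome a clean square of the announced form after factoring out $s^{-(p+3)/(p+1)}$.

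\textbf{Weak setting and error control.} Since $u$ is only a suitable weak solution, pointwise integration by parts fails on $\{u=0\}$. I justify the spatial Pohozaev-type regrouping by applying the stationary condition \eqref{stationary condition} with the (time independent) vector field $Y(y)=\eta(x_0+y)\,y\,G(y,s)$, and I handle the time-derivative contribution by testing the localized energy inequality \eqref{localized energy inequality} against an approximation of $\eta(x_0+y)\,G(y,s)$; it is precisely the inequality in \eqref{localized energy inequality} that forces the statement to be an inequality for $E'$ rather than an equality. Since $|\nabla\eta|$ is supported in $\{|y|\geq 1/2\}$ where $G\leq Ce^{-1/(Cs)}$, the a priori bounds of Lemma \ref{lemma 4.2} bound the boundary contributions by $Ce^{-1/(Cs)}$ times an $L^1_{loc}(ds)$ function $g(s)$.

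\textbf{Main obstacle.} The principal difficulty is the algebraic-cum-analytic coherence between the two ingredients: one must verify that when the spatial identity from \eqref{stationary condition} (an equality) is combined with the temporal dissipation from \eqref{localized energy inequality} (an inequality), the $u^{-p}$ cross terms still cancel exactly and that the residual quadratic form in $(u,y\cdot\nabla u,s\partial_t u)$ factors as the advertised perfect square with the sharp coefficients $\tfrac{1}{p+1}$, $-\tfrac12$ and $1$, rather than as a more general positive definite quadratic form. Tracking the prefactors $\tfrac{p-1}{p+1}$ and $\tfrac{2}{p+1}$ through the computation, and ensuring the cross term between $u$ and $y\cdot\nabla u$ produced by $\partial_s G$ matches exactly the one required for the square, is where the delicate algebra lies.
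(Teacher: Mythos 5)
Your proposal is correct and follows essentially the same route as the paper: differentiate $E$ in $s$, apply the localized energy inequality \eqref{localized energy inequality} to handle the time-derivative contribution (which is exactly why one only gets an inequality), apply the stationary condition \eqref{stationary condition} with the vector field $Y=\eta(x_0+y)\,y\,G(y,s)$ to obtain the spatial Pohozaev identity that absorbs the $\partial_s G=\Delta G$ term, complete the square to produce $\left(\frac{u}{p+1}-\frac{y\cdot\nabla u}{2}+s\partial_t u\right)^2$, and bound the remainder $I(s)$ by $Ce^{-1/(Cs)}g(s)$ using that $\nabla\eta$ is supported where $G(y,s)\leq Ce^{-1/(Cs)}$ together with the a priori estimates of Lemma \ref{lemma 4.2}. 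These are precisely the ingredients and the order in which the paper deploys them.
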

	
	\begin{proof}
		Without loss of generality,  assume $(x_0,t_0)=(0,0)$. First by the localized energy inequality \eqref{localized energy inequality},
		\[
		\begin{split}
			E^\prime(s)&\geq \frac{p-1}{p+1}s^{-\frac{2}{p+1}}\int \left(\frac{1}{2}|\nabla u|^2-\frac{1}{p-1}u^{1-p}\right)G\eta\\
			&+s^{\frac{p-1}{p+1}}\int |\partial_t u|^2G\eta +s^{\frac{p-1}{p+1}}\int \partial_tu \left(\nabla u\cdot\nabla G \right) \eta\\
			& +s^{\frac{p-1}{p+1}}\int \partial_tu \left(\nabla u\cdot\nabla  \eta\right) G+\frac{1}{2}s^{\frac{p-1}{p+1}}\int\left(\frac{1}{2}|\nabla u|^2-\frac{1}{p-1}u^{1-p}\right)\partial_sG\eta \\
			&+\frac{s^{-\frac{p+3}{p+1}}}{(p+1)^2}\int u^2G\eta +\frac{s^{-\frac{2}{p+1}}}{(p+1)}\int u\partial_tuG\eta-\frac{s^{-\frac{2}{p+1}}}{2(p+1)}\int u^2\partial_sG\eta.
		\end{split}
		\]
		In the above, $\nabla G$ and $\partial_sG$ can be replaced by
		\begin{equation}\label{43}
			\left\{
			\begin{array}{ll}
				\nabla G=-\frac{y}{2s}G,\\
				\partial_sG=\Delta G=-\frac{1}{2s}\left(n-\frac{|y|^2}{2s}\right)G.
			\end{array}
			\right.
		\end{equation}
		Next we take $Y=yG\eta$ in the stationary condition \eqref{stationary condition}. Because
		\begin{equation}\label{43.0}
			\left\{
			\begin{array}{ll}
				div Y=(n-\frac{|y|^2}{2s})G\eta+\left(y\cdot \nabla\eta\right) G,\\
				DY(\nabla u,\nabla u)=\left[|\nabla u|^2-\frac{(y\cdot\nabla u)^2}{2s}\right]G\eta+(y\cdot\nabla u)(\nabla u\cdot\nabla\eta)G,\\
			\end{array}
			\right.
		\end{equation}
		we get
		\begin{eqnarray*}
			&&\int \left(\frac{1}{2}|\nabla u|^2-\frac{u^{1-p}}{p-1}\right)\left(n-\frac{|y|^2}{2s}\right)G\eta+\int \left(\frac{1}{2}|\nabla u|^2-\frac{u^{1-p}}{p-1}\right)(y\cdot\nabla\eta) G\\
			&=&\int \left[|\nabla u|^2-\frac{|y\cdot\nabla u|^2}{2s}\right]G\eta+\partial_tu (y\cdot\nabla u ) G\eta+\int(y\cdot\nabla u)(\nabla u\cdot\nabla\eta)G.
		\end{eqnarray*}
		Plugging this identity into the above formula for $E^\prime(s)$,  we get
		\[
		E^\prime(s)\geq s^{-\frac{2}{p+1}-1}\int\left(\frac{u}{p+1}-\frac{y\cdot\nabla u}{2}+s\partial_tu\right)^2G\eta dx+ I(s),\]
		where $I(s)$ is an integral involving $\nabla\eta$.  Because $\nabla\eta=0$ in $B_{1/2}$,  we have
		\[
		|I(s)|
		\leq Ce^{-\frac{1}{Cs}}g(s),
		\]	
		where the exponential term comes from the contribution of the heat kernel $G$, and
		\[g(s):= \|u(-s)^{-p}\|_{L^1(B_2)}+\|\partial_tu(-s)\|_{L^2(B_2)}+\|\nabla u(-s)\|_{L^2(B_2)}\]
		is an $L^1((0,1))$ function.
	\end{proof}
	
	This almost monotonicity of $E$ allows us to define the density function
	\[\Theta(x_0,t_0;u):=\lim_{s\to 0^+}E(s;x_0,t_0,u).\]
	\begin{prop}\label{pro 5.2}
		The density function $\Theta$ is upper semi-continuous in the sense that, if $(u_i)$ is a sequence of uniformly $\alpha$-H\"{o}lder continuous, suitable weak solutions of \eqref{eqn} in $Q_2^-$, converging to $u_\infty$ in the sense of Theorem \ref{thm 1.6}, and $(X_i)$ is a sequence of points in $Q_1^-$ converging to $X_\infty$, then
  \[\Theta(X_\infty;u_\infty)\geq \limsup_{i\to\infty}\Theta(X_i;u_i).\]
	\end{prop}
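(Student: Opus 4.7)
The plan is to deduce upper semi-continuity from the almost-monotonicity of $E$ in Proposition \ref{pro 5.1}, combined with the convergence theory of Theorem \ref{thm 1.6}: on one hand $\Theta(X;u)$ is a limit from the right of a quantity that is monotone up to a controllable error, and on the other hand for each fixed positive scale the averaged energy $\overline{E}$ depends continuously on the solution in precisely the modes supplied by Theorem \ref{thm 1.6}.

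First I would put the monotonicity into a usable form. Set $\Psi_u(s):=\int_0^s C e^{-1/(C\tau)} g_u(\tau)\,d\tau$, where $g_u$ is the $L^1((0,1))$ function from Proposition \ref{pro 5.1}. By Lemma \ref{lemma 4.2}, $\|g_{u_i}\|_{L^1((0,1))}$ is bounded uniformly in $i$; the exponential factor then forces $\Psi_{u_i}(s)\to 0$ as $s\to 0^+$ uniformly in $i$. Integrating the distributional inequality of Proposition \ref{pro 5.1} and averaging over $[s,2s]$ shows that $\overline{E}(s;X,u)+\overline{\Psi}_u(s)$ is non-decreasing in $s$, so that
\[\Theta(X;u)\;\leq\;\overline{E}(s;X,u)+\overline{\Psi}_u(s)\qquad\text{for every } s\in(0,1),\]
and in particular $\Theta(X;u)=\lim_{s\to 0^+}\overline{E}(s;X,u)$, which also shows $\Theta$ is well-defined.

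Next I would verify that for each fixed $s>0$ the map $(X,u)\mapsto\overline{E}(s;X,u)$ is continuous along the convergences supplied by Theorem \ref{thm 1.6}. Unpacking the definition, $\overline{E}(s;X,u)$ integrates $u^2$, $|\nabla u|^2$, and $u^{1-p}$ over the slab $B_2\times[t_0-2s,t_0-s]$ against the smooth, bounded weight $G(y,\tau)\eta(x_0+y)$. The uniform convergence $u_i\to u_\infty$, the strong $L^2$ convergence $\nabla u_i\to\nabla u_\infty$, and the $L^1$ convergence $u_i^{1-p}\to u_\infty^{1-p}$ (Theorem \ref{thm 1.6} and Lemma \ref{lemma 4.7}), together with the continuous dependence of the weight on the translation $X_i\to X_\infty$, yield $\overline{E}(s;X_i,u_i)\to\overline{E}(s;X_\infty,u_\infty)$.

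To conclude, fix $\varepsilon>0$ and choose $s_0\in(0,1)$ small enough that $\overline{E}(s_0;X_\infty,u_\infty)+\overline{\Psi}_{u_\infty}(s_0)\leq \Theta(X_\infty;u_\infty)+\varepsilon$ and $\sup_i\overline{\Psi}_{u_i}(s_0)\leq\varepsilon$; both are possible by the uniformity above. Applying the pointwise inequality from the first step to each $(X_i,u_i)$ gives $\Theta(X_i;u_i)\leq\overline{E}(s_0;X_i,u_i)+\varepsilon$, and taking $\limsup_{i\to\infty}$ together with the continuity from the second step yields $\limsup_i\Theta(X_i;u_i)\leq\Theta(X_\infty;u_\infty)+2\varepsilon$; letting $\varepsilon\to 0$ completes the proof. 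The main technical hurdle is the continuity step, which hinges on the $L^1$ convergence of $u_i^{1-p}$ — exactly the content of Lemma \ref{lemma 4.7} — so upper semi-continuity reduces to the convergence machinery already developed.
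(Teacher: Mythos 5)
Your proposal is correct and follows essentially the same route as the paper: bound $\Theta(X_i;u_i)$ by the averaged energy $\overline{E}(s;X_i,u_i)$ at a fixed small scale plus the exponentially small error from the almost-monotonicity of Proposition \ref{pro 5.1}, pass to the limit in $i$ at that fixed scale using the convergence modes of Theorem \ref{thm 1.6}, and then send $s\to 0$. Your explicit treatment of the error term $\Psi_u$ and its uniformity in $i$ only makes precise what the paper absorbs into the constant $Ce^{-1/(Cs)}$.
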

	\begin{proof}
For any $\varepsilon>0$, choose an $s>0$ such that $\overline{E}(s;X_\infty, u_\infty)\leq\Theta(X_\infty; u_\infty)+\varepsilon$. Because $X_i\to X_\infty$ and $u_i\to u_\infty$ in the sense of Theorem \ref{thm 1.6}, we have
		\[
		\overline{E}(s;X_i,u_i)\to \overline{E}(s;X_\infty,u_\infty).
		\]
		Hence
		\[
		\begin{split}
			\limsup_{i\to\infty}\Theta(X_i;u_i)&\leq \lim_{i\to\infty}\overline{E}(s;X_i,u_i)+Ce^{-\frac{1}{Cs}}\\
			&=\overline{E}(s;X_\infty,u_\infty)+Ce^{-\frac{1}{Cs}}\leq\Theta(X_\infty;u_\infty)+\varepsilon+Ce^{-\frac{1}{Cs}}.
		\end{split}
		\]
		Letting $s\to0$ and $\varepsilon\to 0$, we get the desired claim.
	\end{proof}
	
	\begin{prop}\label{pro 5.3}
		$\Theta(x_0,t_0;u)$ is finite if and only if $(x_0,t_0)\in\{u=0\}$. Equivalently, $\Theta(x_0,t_0;u)=-\infty$ if and only if $(x_0,t_0)\in\{u>0\}$.
	\end{prop}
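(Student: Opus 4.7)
The plan is to decompose $E(s;X_0,u)=\mathcal{I}(s)-\mathcal{J}(s)$ with
\[
\mathcal{I}(s):=s^{\frac{p-1}{p+1}}\!\int\!\left[\tfrac{1}{2}|\nabla u|^2-\tfrac{1}{p-1}u^{1-p}\right]G(y,s)\eta\,dy,\quad \mathcal{J}(s):=\tfrac{1}{2(p+1)}s^{-\frac{2}{p+1}}\!\int\! u^2 G(y,s)\eta\,dy,
\]
and to analyze each summand as $s\to 0^+$ in the two cases. Since $E$ is only defined a.e.\ in $s$, I work with the continuous average $\overline{E}$; by the almost-monotonicity in Proposition \ref{pro 5.1}, $\overline{E}(s)$ and $E(s)$ share the same limit $\Theta(X_0;u)\in[-\infty,+\infty)$ as $s\to 0^+$, the upper bound being automatic from that monotonicity.

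\emph{Case $u(X_0)>0$.} Strict positivity and the smoothness of suitable weak solutions on $\{u>0\}$ give $u\geq c_0>0$ with $|\nabla u|,u^{1-p}$ uniformly bounded on a parabolic cylinder around $X_0$. Hence $|\mathcal{I}(s)|=O(s^{(p-1)/(p+1)})=o(1)$, while the approximate-identity property of $G(\cdot,s)$ yields $\int u^2 G\eta\,dy\to u(X_0)^2>0$, so $\mathcal{J}(s)\to+\infty$. Therefore $E(s)\to-\infty$ and $\Theta(X_0;u)=-\infty$.

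\emph{Case $u(X_0)=0$.} The aim is $\liminf_{s\to 0^+}\overline{E}(s)>-\infty$. For $\overline{\mathcal{J}}$, the Hölder bound $u(x_0+y,t_0-\tau)\leq C(|y|^2+\tau)^{1/(p+1)}$ at the zero $X_0$, combined with the substitution $y=\sqrt{\tau}z$, gives $\int u^2 G(y,\tau)\eta\,dy\leq C\tau^{2/(p+1)}$, so $\mathcal{J}(\tau)$ is uniformly bounded. For $\overline{\mathcal{I}}$, dropping the nonnegative gradient term reduces the task to an upper bound for
\[
\overline{\mathcal{K}}(s):=\tfrac{1}{s}\int_s^{2s}\tau^{\frac{p-1}{p+1}}\int u^{1-p}G(y,\tau)\eta\,dy\,d\tau.
\]
I partition the $y$-integration into $\{|y|\leq\sqrt{s}\}$ and dyadic annuli $A_k=\{2^k\sqrt{s}\leq|y|\leq 2^{k+1}\sqrt{s}\}$, on each of which $G(y,\tau)\leq C s^{-n/2}e^{-c 4^k}$ for $\tau\in[s,2s]$. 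The spacetime estimate \eqref{es2} of Lemma \ref{lemma 4.2}, applied on $Q^-_{2^{k+1}\sqrt{2s}}(X_0)$, bounds $\int_s^{2s}\int_{A_k}u^{1-p}\,dy\,d\tau$ by $C\,2^{k(n+4/(p+1))}s^{n/2+2/(p+1)}$. Thanks to the exponent identity $\frac{p-1}{p+1}+\frac{2}{p+1}=1$, the $s$-factors cancel and the contribution of $A_k$ to $\overline{\mathcal{K}}(s)$ is at most $C\,2^{k(n+4/(p+1))}e^{-c 4^k}$, which is summable in $k$. Therefore $\overline{E}$ is uniformly bounded from below as $s\to 0^+$, giving $\Theta(X_0;u)\in\mathbb{R}$.

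The only delicate step is the exponent bookkeeping in the estimate on $\overline{\mathcal{K}}$: the normalization $\tau^{(p-1)/(p+1)}$ from $E$, the Gaussian supremum $\tau^{-n/2}$, the averaging factor $1/s$, and the cylinder-scale $(2^k\sqrt{s})^{n+4/(p+1)}$ from \eqref{es2} must align precisely. They do because $\alpha=2/(p+1)$ is the natural Hölder exponent of \eqref{eqn}, after which the dyadic decay $e^{-c 4^k}$ closes the sum.
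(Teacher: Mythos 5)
Your overall strategy is the paper's: control the $u^2$ term by the $\alpha$-H\"older bound at $X_0$, control the $|\nabla u|^2$ and $u^{1-p}$ terms by the a priori estimate \eqref{es2}, and let the Gaussian tail absorb the geometric growth. The paper rescales to $u^s(x,t)=s^{-1/(p+1)}u(\sqrt{s}x,st)$ and sums \eqref{es2} over unit cylinders in $B_{s^{-1/2}}$, while you keep $u$ unrescaled and decompose dyadically at scale $\sqrt{s}$; these are the same estimate rearranged, and your exponent check $\tfrac{p-1}{p+1}+\tfrac{2}{p+1}=1$ is exactly the cancellation that the paper's scaling exploits. The $u(X_0)=0$ case is handled correctly.

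There is a gap in the case $u(X_0)>0$. You assert $|\mathcal{I}(s)|=O(s^{(p-1)/(p+1)})$ from near-field boundedness alone, but $\mathcal{I}(s)$ integrates over the full support of $\eta(x_0+\cdot)$, and outside the small cylinder where $u\geq c_0$ the integrands $|\nabla u|^2$ and $u^{1-p}$ are not pointwise controlled --- they are only \eqref{es2}-integrable in spacetime. The far-field piece must be estimated by pairing the superexponential decay $G(y,s)\lesssim s^{-n/2}e^{-c\delta^2/s}$ for $|y|\geq\delta$ with \eqref{es2}; because \eqref{es2} is a spacetime bound rather than pointwise-in-time, this only closes after averaging in $s$, so the claim should be about $\overline{\mathcal{I}}$, consistent with the framework you set up at the start but then dropped. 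Note too that for the conclusion $E\to-\infty$ you only need $\mathcal{I}$ bounded \emph{above}, and it is the nonnegative $|\nabla u|^2$ term that is the concern; the $-u^{1-p}$ term only pushes $\mathcal{I}$ down. This is precisely what the paper's uniform bound \eqref{upper bound on E,1} delivers, independently of whether $u(X_0)$ vanishes, and the dyadic argument you already wrote for the other case supplies it. With that patch your proof is complete.
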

	\begin{proof}
		Assume without loss of generality $(x_0,t_0)=(0,0)$.
		Let us  estimate $\overline{E}(s)$ for $s>0$. Denote
		\[u^s(x,t)=s^{-\frac{1}{p+1}}u(\sqrt{s}x,st), \quad \eta^s(x)=\eta\left(\frac{x}{\sqrt{s}}\right).\]
		By the scaling invariance of $\overline{E}$, we get
		\[
		\begin{split}
			\overline{E}(s)=&\int_{1}^{2}\int_{B_{s^{-1/2}}} t^{\frac{p-1}{p+1}}\left(\frac{1}{2}|\nabla u^s(x,-t)|^2-\frac{u^s(x,-t)^{1-p}}{p-1}\right)G(x,t)\eta^t(x)dxdt\\
			&-\int_{1}^{2}\int_{B_{s^{-1/2}}} \frac{t^{-\frac{2}{p+1}}}{2(p+1)}u^s(x,-t)^2G(x,t)\eta^t(x)dxdt.
		\end{split}
		\]
		We claim that the first integral is always bounded, that is,
		there exists a constant $C$ such that for any $s>0$,
		\begin{equation}\label{upper bound on E,1}
			\int_1^{2}\int_{B_{s^{-1/2}}} t^{\frac{p-1}{p+1}}\left(\frac{1}{2}|\nabla u^s(x,-t)|^2+\frac{u^s(x,-t)^{1-p}}{p-1}\right)G(x,t)\eta^t(x)dxdt\leq C.
		\end{equation}
		This follows from the estimate \eqref{es1} in Lemma \ref{lemma 4.2}, which implies that   for any $x\in B_{s^{-1/2}}$,
		\[\int_{-2}^{-1}\int_{B_1(x)} \left(|\nabla u|^2+u^{1-p} \right)\leq C.\]
		
		If $u(0,0)=0$,  by the $2/(p+1)$-H\"{o}lder continuity of $u$ (and hence $u^s$), we see
		\[u^s(x,t) \leq C\left(1+|x|\right)^{\frac{2}{p+1}}, \quad \forall (x,t)\in B_{s^{-1/2}}\times(-2,-1).\]
		Hence there also exists a constant $C$ such that for any $s>0$,
		\begin{equation}\label{upper bound on E, 2}
			\int_1^{2}\int_{B_{s^{-1/2}}} \frac{t^{-\frac{2}{p+1}}}{2(p+1)}u^s(x,-t)^2G(x,t)\eta^t(x)dxdt\leq C.
		\end{equation}
		Combining \eqref{upper bound on E, 2} and \eqref{upper bound on E,1}, we see in this case
		\[\Theta(0,0;u)=\lim_{s\to 0^+}\overline{E}(s)\geq -C.\]
		
		Next if $u(0,0)=h>0$, then by the H\"{o}lder continuity of $u$, there exists a constant $c>0$ such that
		\[
		u\geq \frac{h}{2} \quad \text{in} ~~ Q_{ch}^-.
		\]
		Hence for all $s<c^2h^2$,
		\[u^s(x,t)\geq \tfrac{h}{2}s^{-\frac{2}{p+1}}  \quad \text{in} ~~ Q_{chs^{-1/2}}^-.\]
		This implies that
		\[\int_{1}^{2}\int_{B_{s^{-1/2}}} \frac{t^{-\frac{2}{p+1}}}{2(p+1)}u^s(x,-t)^2G(x,t)\eta^t(x)dxdt\geq c(h)s^{-\frac{4}{p+1}},\]
		which goes to $+\infty$ as $s\to 0^+$. Combining this fact with \eqref{upper bound on E,1}, we get
		\[\Theta(0,0;u)=\lim_{s\to 0^+}\overline{E}(s)=-\infty. \qedhere\]
	\end{proof}

\section{Stratification of rupture set}\label{section stratification}
\setcounter{equation}{0}
	
In this section, we prove a stratification theorem for the rupture set $\{u=0\}$, where $u$ is an  $\alpha$-H\"{o}lder continuous,  suitable weak solution of \eqref{eqn} in $Q_2$.  We will also show stratification of the time slice $\{u=0\}\cap\{t\}$ for each $t\in\R$. 
 
Our method is essentially  Federer's dimension reduction principle. We will mainly use an abstract version of this principle developed by Brian White, see  \cite[Theorem 2.3 and Theorem 8.2]{White1997}.  We emphasize that the H\"{o}lder regularity and the convergence result established before are crucial for the application of this method to our problem.

\subsection{Blow-up limits}

Because $u$ is a $C^{\alpha,\alpha/2}$-continuous, suitable weak solution of \eqref{eqn} in $Q_2$, it satisfies the a priori estimates \eqref{es1}-\eqref{es5}, and $\Sigma:=\{u=0\}$ is closed.

For any $X_0\in\Sigma$, define the blow up sequence
\begin{equation*}\label{cone 0}
u_{X_0,\lambda}:=\lambda^{-\alpha}u\left(x_0+\lambda x,t_0+\lambda t\right), \quad \lambda\to 0.
\end{equation*}
Thanks to the $\alpha$-H\"{o}lder continuity of $u$, we can use Arzel$\grave{a}$-Ascoli theorem to find a subsequence ($\lambda_i$) and a limit $u_0$ such that $u_{X_0,\lambda_i}:=u_i\to u_0$ uniformly in any compact subset of $\R^{n+1}$.  The limit may not be unique, so we use $\mathcal{T}(u,X_0)$ to denote the set of blow-up limits of $u$ at $X_0$. In the setting of geometric measure theory, $\mathcal{T}(u,X_0)$ is usually called the set of tangent functions.
	
By Theorem \ref{thm 1.6}, we have
\begin{description}
	\item[(i)] $u_i^{1-p}\to u_0^{1-p}$ in $L^1_{loc}(\R^{n+1})$;
	\item[(ii)] $\nabla u_i\to \nabla u_0$ and $\partial_t u_i\rightharpoonup\partial_t u_0$ in $L^2_{loc}(\R^{n+1})$;
	\item[(iii)] $u_0$ is a nonzero suitable weak solution of \eqref{eqn} in $\R^{n+1}$.
\end{description}

For any $X_0=(x_0,t_0)\in\R^{n+1}$ and $s>0$, define
	\[
	\begin{split}
		E(s;X_0,u_0):=&s^{\frac{p-1}{p+1}}\int_{\mathbb R^n} \left(\tfrac{|\nabla u_0(x_0+x,t_0-s)|^2}{2}-\frac{u_0(x_0+x,t_0-s)^{1-p}}{p-1}\right)G(x,s)dx\\
		&-\frac{s^{-\frac{2}{p+1}}}{2(p+1)}\int_{\mathbb R^n} u_0(x_0+x,t_0-s)^2G(x,s)dx.
	\end{split}
	\]
The following result is a simple corollary of the convergence of $u_i$ specified above.
\begin{lem}\label{lem 6.1}
For a.e. $s\geq 0$,
\[\lim_{i\to+\infty}E(s;X_0,u_i)= E(s;X_0,u_0).\]
\end{lem}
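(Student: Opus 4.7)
The energy $E(s;X_0,u)$ splits into three slice integrals of $|\nabla u|^2$, $u^{1-p}$, and $u^2$ at time $t_0-s$, each weighted by the heat kernel $G(\cdot,s)$. I plan to handle each piece separately, combining the convergences from Theorem~\ref{thm 1.6} with the parabolic cylinder estimates of Lemma~\ref{lemma 4.2} and the exponential decay of $G$ to control the spatial tails.

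The quadratic piece $\int u_i^2 G(x,s)\,dx$ is the most straightforward: since $X_0\in\Sigma$ forces $u_i(0)=0$, and the parabolic $\alpha$-H\"older semi-norm of each $u_i$ equals that of $u$, the blow-ups admit the uniform pointwise bound $u_i(x,t)\leq C(|x|+|t|^{1/2})^\alpha$. This majorizes $u_i^2 G(\cdot,s)$ by an integrable function (uniformly in $i$), so the uniform convergence of $u_i$ on compact subsets of $\R^{n+1}$ and the dominated convergence theorem yield convergence of this piece at every $s>0$.

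For the gradient and potential pieces I will establish $L^1$-convergence in $s$ on any interval $[a,b]\subset(0,\infty)$ and then pass to a further subsequence to obtain pointwise a.e.\ convergence (permissible since we have already extracted a subsequence in defining $u_0$). Fix $R>0$ and split the $x$-integration at $|x|=R$. On the inner region $|x|<R$, the kernel $G$ is uniformly bounded on $B_R\times[a,b]$, so Fubini's theorem combined with the strong $L^2_{loc}$-convergence of $\nabla u_i$ and the $L^1_{loc}$-convergence of $u_i^{1-p}$ supplied by Theorem~\ref{thm 1.6} handles both integrands. On the tail $|x|>R$, cover it by unit-scale parabolic cylinders, apply the universal bounds \eqref{es2} on each cylinder, and use the Gaussian decay $G(x,s)\leq C_{a,b}\, e^{-c|x|^2}$ valid for $s\in[a,b]$; summing in annular shells bounds the tail by $\sum_{k\geq R} C k^n e^{-ck^2}$, which vanishes as $R\to\infty$ uniformly in $i$.

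The main technical obstacle is precisely this tail control: the estimates in Lemma~\ref{lemma 4.2} bound integrals over \emph{parabolic cylinders} rather than over single time slices, so integration in $s$ (via Fubini) is essential to invoke them, and this is also why the statement is for a.e.\ $s$ rather than every $s>0$. Once $L^1$-in-$s$ convergence of each of the three pieces is in hand, extracting an a.e.\ convergent subsequence yields the claim.
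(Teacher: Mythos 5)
Your argument is essentially the paper's: the authors give no written proof, calling the lemma "a simple corollary" of the convergences listed after Theorem \ref{thm 1.6}, and your filling-in (dominated convergence for the $u^2$ term via the scale-invariant H\"{o}lder bound and $u_i(0,0)=0$, local strong $L^2$/$L^1$ convergence of $|\nabla u_i|^2$ and $u_i^{1-p}$ for the inner region, and the scale-invariant cylinder estimate \eqref{es2} plus Gaussian decay of $G$ for the tails) is exactly the intended route. The only caveat is your final passage to a further subsequence to turn $L^1$-in-$s$ into a.e.-in-$s$ convergence: this is harmless for every use of the lemma (and can even be avoided, since by the scaling relation and the almost-monotonicity of Proposition \ref{pro 5.1} the full limit $\lim_{i\to\infty}E(s;X_0,u_i)=\Theta(X_0;u)$ already exists for each $s>0$, so subsequential identification of the limit suffices), though as literally stated the lemma refers to the fixed blow-up sequence.
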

	
Next we establish the self-similar property of $u_0$,  which will play an important role in the analysis of the density function $\Theta(\cdot;u_0)$.		
	
\begin{prop}\label{prop backward self-similar}
Let $X_0\in\Sigma$. Then any $u_0\in\mathcal{T}(u,X_0)$ is backward self-similar with respect to $(0,0)$, that is, for any $(x,t)\in\R^n\times(-\infty,0]$ and $\lambda>0$,
\begin{equation}\label{backward self-similar}
u_0(x,t)=\lambda^{-\alpha}u_0(\lambda x, \lambda^2t).
\end{equation}
Moreover, $\Theta(0;u_0)=\Theta(X_0;u)=E(s;0,u_0)$ for any $s>0$.
\end{prop}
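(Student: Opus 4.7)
The strategy is to show that the rescaled energy $E(\,\cdot\,;0,u_0)$ is constant in $s$, and then to read self-similarity off the nonnegative gain term in the monotonicity formula (Proposition~\ref{pro 5.1}). A change of variables $z=\lambda_i y$ in the definition of Section~\ref{section 5}, combined with the homogeneity
$u_i(y,-s)=\lambda_i^{-\alpha}u(x_0+\lambda_i y,t_0-\lambda_i^{2}s)$
and the Gaussian identity $G(y,s)\,dy=G(z,\lambda_i^{2}s)\,dz$, yields
\[
E(s;0,u_i)\;=\;E(\lambda_i^{2}s;X_0,u)\;+\;\mathrm{err}_i(s),
\]
where $\mathrm{err}_i(s)$ is generated by the cutoff $\eta(x_0+\cdot)$ in the Section~\ref{section 5} definition and, for each fixed $s>0$, tends to $0$ as $\lambda_i\to 0$ (the set $\{\eta(x_0+z)<1\}$ lies outside $B_{1/2}(0)$, so the factor $G(z,\lambda_i^{2}s)$ kills it exponentially as $\lambda_i^{2}s\to 0$). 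Propositions~\ref{pro 5.1} and \ref{pro 5.3} give $\lim_{s\to 0^{+}}\overline{E}(s;X_0,u)=\Theta(X_0;u)\in\mathbb R$, so $E(\lambda_i^{2}s;X_0,u)\to\Theta(X_0;u)$ for every fixed $s>0$. Combined with Lemma~\ref{lem 6.1}, this forces
\[
E(s;0,u_0)\;=\;\Theta(X_0;u)\qquad\text{for a.e.\ }s>0.
\]

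Next, I would apply the monotonicity formula to $u_0$ itself. Since $u_0$ is a suitable weak solution on all of $\mathbb R^{n+1}$, a sequence of cutoffs $\eta_R\nearrow 1$ may be used in deriving Proposition~\ref{pro 5.1} for $u_0$; thanks to the global validity of (\ref{es1})--(\ref{es3}) for $u_0$, the localization error $Ce^{-1/(Cs)}g(s)$ vanishes in the limit $R\to\infty$. Therefore
\[
E'(s;0,u_0)\;\geq\;s^{-(p+3)/(p+1)}\!\int_{\mathbb R^{n}}\Bigl(\frac{u_0}{p+1}-\frac{y\cdot\nabla u_0}{2}+s\,\partial_t u_0\Bigr)^{\!2}G(y,s)\,dy\;\geq\;0
\]
distributionally in $s$. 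Because the left-hand side vanishes (constant function), the nonnegative right-hand side must vanish for a.e.\ $s>0$, and, since $G>0$, the integrand itself vanishes a.e.; equivalently, $\alpha u_0-y\cdot\nabla u_0-2t\,\partial_t u_0=0$ a.e.\ on $\mathbb R^{n}\times(-\infty,0)$.

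To conclude, this identity is precisely the infinitesimal generator of parabolic scaling: for fixed $(y,t)$ with $t<0$,
\[
\frac{d}{d\lambda}\!\left[\lambda^{-\alpha}u_0(\lambda y,\lambda^{2}t)\right]=\lambda^{-\alpha-1}\!\left(-\alpha u_0+\lambda y\cdot\nabla u_0+2\lambda^{2}t\,\partial_t u_0\right),
\]
where the right-hand side is evaluated at $(\lambda y,\lambda^{2}t)$, which vanishes by the identity just proved. Hence $\lambda\mapsto\lambda^{-\alpha}u_0(\lambda y,\lambda^{2}t)$ is constant in $\lambda>0$; evaluating at $\lambda=1$ yields \eqref{backward self-similar} a.e., and the H\"older continuity of $u_0$ upgrades this to the pointwise statement. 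Self-similarity then forces $E(\,\cdot\,;0,u_0)$ to be literally constant in $s$; letting $s\to 0^{+}$ gives $\Theta(0;u_0)=E(s;0,u_0)=\Theta(X_0;u)$ for every $s>0$.

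The main technical obstacle is the uniform-in-$s$ control of $\mathrm{err}_i(s)$ in the scaling step, so that Lemma~\ref{lem 6.1} may be combined with the a.e.\ monotonicity limit. A secondary subtle point is upgrading the a.e.-valid linear transport identity on $u_0$ to genuine pointwise self-similarity: since $\nabla u_0$ and $\partial_t u_0$ are only in $L^{2}_{\mathrm{loc}}$ while the parabolic scaling vector field $(y,2t)$ is Lipschitz with a singularity at the origin, the integration along parabolic rays must be justified via mollification and a Fubini-type argument exploiting the continuity of $u_0$.
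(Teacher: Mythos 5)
Your proposal is correct and follows essentially the same route as the paper: identify $E(\,\cdot\,;0,u_0)$ as the constant $\Theta(X_0;u)$ via the scaling relation together with Lemma~\ref{lem 6.1}, feed this into the monotonicity formula of Proposition~\ref{pro 5.1} to kill the nonnegative gain term, read off the parabolic transport identity, and integrate along parabolic rays using the continuity of $u_0$. Your remarks on the cutoff error $\mathrm{err}_i(s)$ and on removing the localization for $u_0$ simply make explicit some technical points that the paper leaves implicit.
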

	\begin{proof}
		By Proposition \ref{pro 5.1}, for a.e. $0<s_1<s_2<+\infty$,
		\[
		\begin{split}
			&E(s_2;0,u_0)-E(s_1;0,u_0)\\
			&\geq \int_{s_1}^{s_2}t^{-\frac{p+3}{p+1}}\int_{\mathbb R^n}\left[\frac{u_0(x,-t)}{p+1}-\frac{x\cdot\nabla u_0(x,-t)}{2}+t\partial_tu_0(x,-t)\right]^2G(x,t) dxdt.
		\end{split}
		\]
		By Lemma \ref{lem 6.1}, for a.e. $s>0$,
		\[
		\begin{split}
			E(s;0,u_0)=\lim\limits_{i\to\infty}E(s;0,u_i)=\lim\limits_{i\to\infty}E(\lambda_i^2s;X_0,u)=\Theta(X_0;u).
		\end{split}
		\]
		Thus $\Theta(0;u_0)=\Theta(X_0;u)$ and
		\[
		\begin{split}
			\int_{s_1}^{s_2}t^{-\frac{p+3}{p+1}}\int_{\mathbb R^n}\left[\frac{u_0(x,-t)}{p+1}-\frac{x\cdot\nabla u_0(x,-t)}{2}+t\partial_tu_0(x,-t)\right]^2G(x,t) dxdt=0.
		\end{split}
		\]
		Since $s_1$ and $s_2$ are arbitrary, we get
		\[\frac{u_0(x,t)}{p+1}-\frac{x}{2}\cdot\nabla u_0(x,t)-t\partial_tu_0(x,t)=0 \quad \text{a.e. in} ~ \R^n\times(-\infty,0).\]	
		Integrating this equation along $\{x=\sqrt{-t}y\}$  (for a.e. $y\in\R^n$) and then using the continuity of $u$,  we get \eqref{backward self-similar}.
	\end{proof}
	
	\begin{coro}\label{coro 6.3}
		For any  $u_0\in\mathcal{T}(u,X_0)$, $(x,t)\in\R^n\times(-\infty,0]$ and $\lambda>0$,
		\[\Theta(\lambda x,\lambda^2 t;u_0)=\Theta(x,t;u_0).\]
	\end{coro}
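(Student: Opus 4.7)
The plan is to deduce the corollary from the scaling invariance of the energy $E$ combined with the backward self-similarity of $u_0$ established in Proposition \ref{prop backward self-similar}.

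First I would fix $\lambda>0$, $(x,t)\in\R^n\times(-\infty,0]$ and $s>0$, and analyze $E(\lambda^2 s;(\lambda x,\lambda^2 t),u_0)$. Performing the change of variables $y=\lambda y'$ in the defining integral and using the heat kernel identity $G(\lambda y',\lambda^2 s)=\lambda^{-n}G(y',s)$, the measure $G(y,s)\,dy$ is preserved. I would then substitute the self-similarity relation $u_0(\lambda z,\lambda^2\tau)=\lambda^\alpha u_0(z,\tau)$ (together with its differentiated form $\nabla u_0(\lambda z,\lambda^2\tau)=\lambda^{\alpha-1}\nabla u_0(z,\tau)$) to express the integrand entirely in terms of $u_0(x+y',t-s)$ and its derivatives. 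Note that because $\lambda>0$ and $t\leq 0$, the points at which self-similarity is invoked always lie in the backward half-space $\R^n\times(-\infty,0]$, where Proposition \ref{prop backward self-similar} applies.

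A routine bookkeeping of exponents, using $\alpha=2/(p+1)$, then shows that each of the three pieces of $E$ (the gradient term, the potential term $u_0^{1-p}$ and the mass term $u_0^2$) is individually invariant under this rescaling, precisely because $\alpha$ is compatible with the parabolic scaling of \eqref{eqn}. This yields the identity
\[E(\lambda^2 s;(\lambda x,\lambda^2 t),u_0)=E(s;(x,t),u_0)\qquad\text{for every } s>0.\]
Letting $s\to 0^+$ and invoking the definition $\Theta(\cdot;u_0)=\lim_{s\to 0^+}E(s;\cdot,u_0)$ then gives the corollary.

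I expect no serious obstacle: the whole argument is a bookkeeping exercise with scaling exponents, and the only nontrivial input (backward self-similarity) has already been proven. The underlying reason everything matches is that $\alpha=2/(p+1)$ is exactly the exponent that renders the stationary-type energy $E$ scale-invariant under the natural parabolic rescaling associated with \eqref{eqn}; this is also the mechanism behind the monotonicity formula in Section \ref{section 5}.
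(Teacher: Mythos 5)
Your proposal is correct and follows exactly the paper's route: establish the scaling identity $E(\lambda^2 s;\lambda x,\lambda^2 t,u_0)=E(s;x,t,u_0)$ via the backward self-similarity of $u_0$ (plus the heat-kernel scaling identity and bookkeeping with $\alpha=2/(p+1)$), then send $s\to 0^+$. The paper states this identity without spelling out the change of variables, so your version is simply a more explicit rendering of the same argument.
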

	\begin{proof}
		By the self-similarity of $u_0$,  for any $(x,t)\in\mathbb R^n\times (-\infty,0]$ and $\lambda>0$, 
  \[E(\lambda^2 s;\lambda x,\lambda^2 t, u_0)=E(s;x,t,u_0).\]
  Sending $s\to0^+$, we get the desired claim by the definition of $\Theta$.
	\end{proof}

	\begin{prop}\label{prop 6.4}
		For any $u_0\in\mathcal{T}(u,X_0)$ and $X=(x,t)\in\R^{n+1}$, $\Theta(X;u_0)\leq \Theta(0;u_0)$.
	\end{prop}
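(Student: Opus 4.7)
The plan is to combine the monotonicity of $E(\cdot; X, u_0)$ with the backward self-similarity of $u_0$. If $u_0(X) > 0$ there is nothing to prove by Proposition \ref{pro 5.3}, so I would assume $u_0(X) = 0$.

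Applied to the globally defined $u_0$, the proof of Proposition \ref{pro 5.1} runs without any spatial cutoff, so the error term $Ce^{-1/(Cs)} g(s)$ disappears and $s \mapsto E(s; X, u_0)$ becomes genuinely non-decreasing. Hence
\[
\Theta(X; u_0) \;=\; \lim_{s \to 0^+} E(s; X, u_0) \;\leq\; \lim_{s \to \infty} E(s; X, u_0),
\]
once the right-hand limit is shown to exist and to equal $\Theta(0; u_0)$.

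To identify that limit I would exploit the parabolic scaling invariance of $E$. Setting $u_0^\lambda(y,\tau) := \lambda^{-\alpha} u_0(\lambda y, \lambda^2 \tau)$ and $\mathcal{D}_\lambda(X) := (x/\lambda, t/\lambda^2)$, a term-by-term matching (the powers of $\lambda$ cancel exactly thanks to $\alpha = 2/(p+1)$) gives
\[
E(s; X, u_0) \;=\; E\!\left(s/\lambda^2;\; \mathcal{D}_\lambda(X),\; u_0^\lambda\right).
\]
Taking $\lambda = \sqrt{s}$ and invoking backward self-similarity (Proposition \ref{prop backward self-similar}), $u_0^{\sqrt{s}} \equiv u_0$ on $\R^n \times (-\infty, 0]$. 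Since the integrand of $E(1; Y, \cdot)$ only samples the function at the time $t_Y - 1$, which equals $(t-s)/s \leq 0$ as soon as $s \geq t$, we may replace $u_0^{\sqrt{s}}$ by $u_0$ to obtain the key identity
\[
E(s; X, u_0) \;=\; E\!\left(1;\; \mathcal{D}_{\sqrt{s}}(X),\; u_0\right) \qquad \text{for all } s \geq \max(t,0).
\]

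As $s \to \infty$, $\mathcal{D}_{\sqrt{s}}(X) = (x/\sqrt{s}, t/s) \to 0$. Continuity of $Y \mapsto E(1; Y, u_0)$ at the origin, together with Proposition \ref{prop backward self-similar}, then yields
\[
\lim_{s \to \infty} E(s; X, u_0) \;=\; E(1; 0, u_0) \;=\; \Theta(0; u_0),
\]
which combined with the monotonicity step finishes the proof. The only genuine technical point is this continuity of $E(1; \cdot, u_0)$ at $0$, which I expect to follow from dominated convergence: the Gaussian weight $G(\cdot, 1)$ concentrates on a bounded set on which the local $L^1/L^2$ bounds of Lemma \ref{lemma 4.2} control $u_0^{1-p}$, $|\nabla u_0|^2$, and $u_0^2$ uniformly under small shifts of the center. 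Should this step prove delicate, I would pass to the averaged quantity $\overline{E}$ from Section \ref{section 5}, which is jointly continuous in its arguments by construction and obeys the same scaling and monotonicity, closing the argument by the same chain of identities.
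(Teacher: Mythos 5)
Your argument is correct and rests on exactly the same mechanism as the paper's proof: monotonicity of $E(\cdot;X,u_0)$ (clean, with no cutoff error, since $u_0$ is a global suitable weak solution), plus the scaling invariance of $E$ and the backward self-similarity of $u_0$ at the origin to identify $\lim_{s\to\infty}E(s;X,u_0)=\Theta(0;u_0)$. The only divergence is in how that limit is identified: the paper rescales the \emph{function}, writing $E(\lambda^2 s;X,u_0)=E(s;0,(u_0)_{X,\lambda})$ and passing to the limit via the convergence theory (Theorem \ref{thm 1.6} and Lemma \ref{lem 6.1}) applied to the blow-down sequence, whereas you rescale the \emph{base point}, writing $E(s;X,u_0)=E\bigl(1;\mathcal{D}_{\sqrt{s}}(X),u_0\bigr)$ for $s\geq\max(t,0)$ and concluding by continuity of the density functional in its center. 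These are literally the same computation with the roles of function and point swapped, but your version avoids invoking the compactness machinery for sequences of solutions. Be aware that your primary continuity claim for the pointwise-in-$s$ quantity $E(1;\cdot,u_0)$ is genuinely delicate: the time slices of $|\nabla u_0|^2$ and $u_0^{1-p}$ are only $L^1$ in time, so dominated convergence does not give continuity under shifts of the time center; your fallback to the averaged quantity $\overline{E}$ (which is continuous in $(x_0,t_0)$, inherits the scaling identity, is monotone, and satisfies $\overline{E}(1;0,u_0)=\Theta(0;u_0)$ by Proposition \ref{prop backward self-similar}) is the right patch and mirrors the paper's own use of ``a.e.\ $s$'' statements and of $\overline{E}$. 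Finally, the initial reduction to $u_0(X)=0$ via Proposition \ref{pro 5.3} is harmless but unnecessary, since the chain of inequalities works for any $X$.
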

	\begin{proof}
		Define the blow-down sequence of $u_0$ at $X$ by
		\[(u_0)_{X,\lambda}(Y)=\lambda^{-\alpha}u_0(x+\lambda y, t+\lambda^2s),\]
		where $Y=(y,s)\in\R^n\times(-\infty,0]$ and $\lambda\to+\infty$. Since $u_0$ is backward self-similar with respect to $(0,0)$ and globally H\"{o}lder continuous, we see
		\[(u_0)_{X,\lambda}(Y)=u_0(\lambda^{-1}x+y, \lambda^{-2}t+s)\to u_0(Y)\]
		locally uniformly in $\R^n\times(-\infty,0]$. This uniform convergence  implies that Theorem \ref{thm 1.6} holds for $(u_0)_{X,\lambda}$. Then for a.e. $s>0$,
		\begin{eqnarray}\label{cone 1}
			\Theta(0;u_0)&\equiv E(s;0,u_0)=\lim\limits_{\lambda\to +\infty}E(s;0,(u_0)_{X,\lambda})=\lim\limits_{\lambda\to +\infty}E(\lambda^2 s;X, u_0)\\
			&=\lim\limits_{\tau\to +\infty}E(\tau;X,u_0)\geq \Theta(X;u_0).  \nonumber 
		\end{eqnarray}
  Here the first equality follows from Lemma \ref{lem 6.1}, and in  the last inequality we have used the monotonicity formulat at $X$.
	\end{proof}
	
	\begin{prop}\label{prop max density points}
		If $\Theta(X;u_0)=\Theta(0;u_0)$, then
		\begin{enumerate}
			\item $u_0$ is backward self-similar with respect to $X$;
			\item for any $Y=(y,s)\in \R^n\times(-\infty,0]$ and $\lambda>0$,
			\[\Theta(X+Y;u_0)=\Theta(X+\mathcal{D}_\lambda(Y);u_0);\]
			\item  if $t\le0$, then for any $a\in\R$ and $Y=(y,s)\in\R^n\times(-\infty,0]$,
			\[
			u_0(ax+y,a^2t+s)=u_0(y,s).
			\]
		\end{enumerate}
	\end{prop}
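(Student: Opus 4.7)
The plan is to analyze the equality case in the monotonicity formula (Proposition \ref{pro 5.1}) at three increasingly refined levels, corresponding to the three conclusions. The key driver is that the equality $\Theta(X;u_0)=\Theta(0;u_0)$ saturates the chain of inequalities coming from the almost-monotonicity of $E(s;X,u_0)$, thereby forcing the $L^2$ integrand in the monotonicity inequality to vanish identically---an infinitesimal condition encoding self-similarity.

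For (1), I would first extend the monotonicity formula to the globally defined $u_0$ without cutoff: using rescaled cutoffs $\eta_R$ supported on $B_{2R}$ with $\eta_R\equiv1$ on $B_R$, the error term in Proposition \ref{pro 5.1} becomes $Ce^{-R^2/Cs}g(s)$ (since $|\nabla\eta_R|$ lives where the heat kernel $G(y,s)$ is exponentially small), and letting $R\to\infty$ yields
\[E'(s;X,u_0)\;\geq\; s^{-\frac{p+3}{p+1}}\int_{\R^n}\left(\frac{u_0}{p+1}-\frac{y\cdot\nabla u_0}{2}+s\partial_tu_0\right)^2 G(y,s)\,dy\;\geq\;0.\]
Combined with the blow-down argument from Proposition \ref{prop 6.4}, this gives $\Theta(X;u_0)\le E(s;X,u_0)\le\Theta(0;u_0)$ for every $s>0$. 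The equality hypothesis collapses the chain to $E(s;X,u_0)\equiv\Theta(0;u_0)$, so the integrand vanishes identically; this is precisely the infinitesimal generator of parabolic dilation about $X$, and integrating along its characteristic rays (as in the proof of Proposition \ref{prop backward self-similar}, but centered at $X$) yields the backward self-similarity (1).

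With (1) in hand, (2) is immediate: the proof of Corollary \ref{coro 6.3} applies verbatim with $X$ in place of $0$, giving $\Theta(X+\mathcal{D}_\lambda Y;u_0)=\Theta(X+Y;u_0)$. For (3), I would iterate: applying Corollary \ref{coro 6.3} at base point $0$ to the point $X\in\R^n\times(-\infty,0]$ shows $\Theta(X_\mu;u_0)=\Theta(0;u_0)$ for every $X_\mu:=(\mu x,\mu^2 t)$ with $\mu>0$, so Step 1 applies at each $X_\mu$ and yields the infinitesimal self-similarity
\[(z-\mu x)\cdot\nabla u_0(z,\tau)+2(\tau-\mu^2 t)\partial_tu_0(z,\tau)=\alpha u_0(z,\tau)\]
on $\{\tau\leq\mu^2 t\}$. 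Subtracting the $\mu=0$ version and dividing by $\mu$ gives $x\cdot\nabla u_0+2\mu t\partial_tu_0=0$; sending $\mu\to0^+$ produces $x\cdot\nabla u_0\equiv0$ in $\R^n\times(-\infty,0]$, and substituting back (when $t<0$) yields $\partial_tu_0\equiv0$, which extends by self-similarity about $0$ to the whole backward half-space. Hence $u_0$ is invariant under $x$-translation and independent of $\tau$, from which (3) follows; when $t=0$, a single subtraction of infinitesimal generators at $X=(x,0)$ and $0$ gives the $x$-translation invariance directly.

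The main technical obstacle is Step 1, namely the equality case: one must carefully justify that the cutoff error vanishes in the global limit, and upgrade the $L^2$ vanishing of the integrand into a pointwise self-similarity identity via characteristic integration along parabolic rays through $X$. A secondary subtlety arises in Step 3 in handling the $\mu\to0^+$ limit, where the weighted infinitesimal identities must be separated cleanly to isolate the $x$-translation and $\tau$-translation invariances.
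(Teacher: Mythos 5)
Your proposal is correct and for parts (1) and (2) follows the paper's approach essentially verbatim: the sandwich $\Theta(X;u_0)\le E(s;X,u_0)\le \lim_{\tau\to\infty}E(\tau;X,u_0)=\Theta(0;u_0)$ coming from (cone~1) in Proposition~\ref{prop 6.4} collapses under the equality hypothesis, forcing $E(\cdot;X,u_0)$ to be constant, whence the integrand in Proposition~\ref{pro 5.1} vanishes and one integrates along parabolic rays through $X$ exactly as in Proposition~\ref{prop backward self-similar}; and (2) is indeed just Corollary~\ref{coro 6.3} translated to base point $X$. Your handling of the cutoff error with $\eta_R$ is a reasonable way to make precise what the paper asserts implicitly when it invokes ``the monotonicity formula at $X$'' for the globally defined $u_0$.

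For part (3) you take a genuinely different route from the paper. The paper never differentiates: it notes that $u_0$ is backward self-similar about both $(ax,a^2t)$ and $(0,0)$, composes the two scaling relations to deduce $u_0(ax+y,a^2t+s)=u_0(\lambda^{-1}ax+y,\lambda^{-2}a^2t+s)$, and sends $\lambda\to\infty$ using only the continuity of $u_0$. Your version instead differentiates the self-similarity at each $X_\mu=(\mu x,\mu^2 t)$ to obtain the infinitesimal generator $(z-\mu x)\cdot\nabla u_0+2(\tau-\mu^2 t)\partial_tu_0=\alpha u_0$ on $\{\tau\le\mu^2 t\}$, subtracts the $\mu=0$ version, divides by $\mu$, and sends $\mu\to0^+$ to extract $x\cdot\nabla u_0\equiv0$ and then $\partial_tu_0\equiv0$ (when $t<0$). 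Both arguments are valid. The paper's composition-plus-limit avoids differentiation entirely and is therefore somewhat more robust with respect to regularity at the set $\{u_0=0\}$ (everything is at the level of equalities between values of the continuous function $u_0$). Your infinitesimal approach requires the a.e. pointwise PDE identities, which are available since $\nabla u_0,\partial_tu_0\in L^2_{loc}$ and $u_0$ is smooth on $\{u_0>0\}$, but it also requires a final integration along rays to pass from $x\cdot\nabla u_0\equiv0$ to the genuine translation identity $u_0(ax+y,s)=u_0(y,s)$, a step you gloss over. The trade-off is that your route makes the algebra of the two invariances (translation vs.\ time-independence) more transparent, and the $\mu\to0^+$ limit cleanly separates them, whereas the paper's argument produces the final identity in one stroke. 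Either way the result is established.
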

	\begin{proof}
		(1) If $\Theta(X;u_0)=\Theta(0;u_0)$,  \eqref{cone 1} implies that $E(\tau;X,u_0)\equiv\Theta(X;u_0)$, which means $u_0$ is also backward self-similar with respect to $X$.

		(2) This is the same with Corollary \ref{coro 6.3}.

		(3) For any $a>0$, by Corollary \ref{coro 6.3} and (2),
		\[\Theta(\mathcal{D}_{a^{-1}}(X);u_0)=\Theta(X;u_0)=\Theta(0;u_0).\]
		Thus $u_0$ is backward self-similar with respect to $(ax,a^2t)$. Since $u_0$ is also backward self-similar  with respect to $(0,0)$,
		for any $\lambda>0$, we have
		\[u_0(ax+y, a^2 t+s)=\lambda^{-\alpha}u_0(ax+\lambda y, a^2 t+\lambda^2s)=u_0(\lambda^{-1}ax+y, \lambda^{-2}a^2t+s).\]
		Since $u_0$ is continuous, for any $(y,s)\in\R^n\times(-\infty,0]$,
		\[ \lim_{\lambda\to+\infty}u_0(\lambda^{-1}ax+y, (a\lambda)^{-2}t+s)= u_0(y,s).\]
		Hence
		\[u_0(ax+y, a^2 t+s)=u_0(y, s).\]
		A change of variables shows this also holds for $a<0$.
	\end{proof}
	
	\begin{rmk}\label{rmk 6.5}
		According to the definition in \cite[Section 8]{White1997}, $\Theta(\cdot;u_0)$ is a  \emph{backward conelike function}.
	\end{rmk}

	\subsection{Stratification theorem}	
	For any $X_0\in\Sigma$ and $u_0\in\mathcal{T}(u,X_0)$, define
	\[
	\mathcal{M}(\Theta(\cdot;u_0)):=\left\{X\in\R^{n+1}: \Theta(X;u_0)=\Theta(0;u_0)\right\},
	\]
	\[
	\mathcal{V}(\Theta(\cdot;u_0)):=\mathcal{M}(\Theta(\cdot;u_0))\cap\{t=0\},
	\]
	\[
	\mathcal{W}(\Theta(0;u_0)):=\Big\{x\in\R^n: \Theta(y,s;u_0)=\Theta(y+x,s;u_0),~\forall (y,s)\in\R^n\times(-\infty,0]\Big\}.
	\]
	Similar to \cite[Theorem 8.1]{White1997}, by the backward conelike property (i.e. Proposition \ref{prop 6.4} and Proposition \ref{prop max density points}), we have
	\begin{prop}\label{prop 6.7}
		For any $X_0\in\Sigma$ and $u_0\in\mathcal{T}(u,X_0)$, the followings hold.
		\begin{enumerate}
			\item  $\mathcal{V}(\Theta(\cdot;u_0))=\mathcal{W}(\Theta(\cdot;u_0))$, which is a linear subspace of $\R^n$. Moreover, $u_0$ is translation invariant in all directions of this subspace.
			\item Either $\mathcal{M}(\Theta(\cdot;u_0))=\mathcal{V}(\Theta(\cdot;u_0))$ or $\mathcal{V}(\Theta(\cdot;u_0))\times(-\infty,a]$ for some $0\le a\le+\infty$. In the later case $\Theta(\cdot;u_0)$ and $u_0$ are time independent up to $t=a$.
		\end{enumerate}
	\end{prop}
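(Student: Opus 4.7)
The plan is to deduce both parts from the abstract structural theorem of White \cite[Theorem 8.1]{White1997}, whose hypotheses on a backward conelike function have been verified for $\Theta(\cdot;u_0)$ in Remark \ref{rmk 6.5} via Propositions \ref{pro 5.2}, \ref{prop backward self-similar}, \ref{prop 6.4} and \ref{prop max density points}. The essential new input is Proposition \ref{prop max density points}(3), which converts the identity $\Theta(X;u_0)=\Theta(0;u_0)$ into a symmetry of $u_0$ itself; combined with the fact that $\Theta$ is defined through Gaussian integrals of $u_0$ and its derivatives, any such symmetry of $u_0$ descends automatically to $\Theta$.

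For part (1), the inclusion $\mathcal{W}\subseteq\mathcal{V}$ is immediate from the definitions (take $(y,s)=(0,0)$ in the defining identity of $\mathcal{W}$). Conversely, if $x\in\mathcal{V}$, applying Proposition \ref{prop max density points}(3) at the point $(x,0)$ yields
\[
u_0(ax+y,s)=u_0(y,s)\quad\text{for all }a\in\mathbb{R},\ (y,s)\in\mathbb{R}^n\times(-\infty,0].
\]
In particular $u_0$ is translation invariant in the direction $x$, and this invariance passes to $\Theta$, so $x\in\mathcal{W}$. Linearity of $\mathcal{V}$ follows because the identity above with $a$ varying gives closure under scalar multiples, while composing the invariances corresponding to $x_1,x_2\in\mathcal{V}$ gives $u_0(x_1+x_2+y,s)=u_0(y,s)$, hence $x_1+x_2\in\mathcal{V}$.

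For part (2), I would first show $\mathcal{M}\subseteq\mathcal{V}\times\mathbb{R}$. If $(x,t)\in\mathcal{M}$ with $t\le 0$, the identity $u_0(ax+y,a^2t+s)=u_0(y,s)$ from Proposition \ref{prop max density points}(3) is independent of $a$; differentiating in $a$ at $a=0$ on $\{u_0>0\}$ forces $x\cdot\nabla u_0\equiv 0$, and a one-dimensional continuity argument along the line $\{y_0+ax\}$ then extends the translation invariance across $\{u_0=0\}$, yielding $x\in\mathcal{V}$. For $(x,t)\in\mathcal{M}$ with $t>0$, subtracting elements of $\mathcal{V}$ reduces to the case $x\perp\mathcal{V}$, and then the maximality of $\Theta$ at the origin together with the upper semi-continuity from Proposition \ref{pro 5.2} forces $x=0$. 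Setting $T:=\{t\in\mathbb{R}:(0,t)\in\mathcal{M}\}$, we thus have $\mathcal{M}=\mathcal{V}\times T$. For any $t_0\in T$ with $t_0<0$, combining the origin self-similarity from Proposition \ref{prop backward self-similar} with the $(0,t_0)$-self-similarity from Proposition \ref{prop max density points}(1) gives $u_0(z,\tau)=u_0(z,\tau+(1-\lambda^2)t_0)$ for every $\lambda>0$ and $\tau\le 0$; letting $\lambda\to 0^+$ forces $u_0(z,\cdot)$ to be constant on $(-\infty,0]$, so $T\supseteq(-\infty,0]$. Upper semi-continuity of $\Theta$ then produces $T=(-\infty,a]$ for $a:=\sup T\in[0,+\infty]$.

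The main obstacle is the analysis of $\mathcal{M}$ at forward times $t>0$, where neither the backward self-similarity of $u_0$ (Proposition \ref{prop backward self-similar}) nor Proposition \ref{prop max density points}(3) directly applies. White's abstract theorem \cite[Theorem 8.1]{White1997} is designed precisely to bypass this difficulty by extracting the conclusion $\mathcal{M}=\mathcal{V}\times(-\infty,a]$ from one-sided (backward) data alone, which is why it can be invoked here as a black box to handle the forward-time extension, reducing the geometric-measure-theoretic conclusion to the translation-invariance and self-similarity identities already established for $u_0$.
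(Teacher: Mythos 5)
Your proposal takes essentially the same approach as the paper: the paper proves Proposition~\ref{prop 6.7} simply by invoking White's abstract stratification theorem \cite[Theorem~8.1]{White1997}, noting that the backward conelike hypotheses have been verified in Propositions~\ref{prop 6.4} and~\ref{prop max density points}, which is exactly what you do. The extra detail you supply for part~(1) and for the backward-time half of part~(2) is a reasonable unpacking of what White's theorem does, though it is somewhat sketchy in places (in particular, the claim that upper semi-continuity of $\Theta$ "forces $x=0$" for $(x,t)\in\mathcal M$ with $t>0$ is not an argument, and the time-translation step via $\lambda\to0^+$ needs a more careful choice of parameters); since you ultimately fall back on White's theorem as a black box to cover exactly these gaps, the overall structure matches the paper's.
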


	\begin{defi}\label{def 6.8}
		For any $X_0\in\Sigma$ and $u_0\in\mathcal{T}(u,X_0)$,  the space-time spine of $\Theta(\cdot;u_0)$ is
		\[
		S(\Theta(\cdot;u_0)):=\left\{
		\begin{array}{ll}
			\mathcal{V}(\Theta(\cdot;u_0))\times \R,~~\text{if}~\mathcal{M}(\Theta(\cdot;u_0))=\mathcal{V}(\Theta(\cdot;u_0))\times\R,\\
			\mathcal{V}(\Theta(\cdot;u_0)),~~\text{otherwise}.
		\end{array}
		\right.
		\]
		The dimension of the space-time spine is defined by
		\[
		dim(\Theta(\cdot;u_0)):=\left\{
		\begin{array}{ll}
			dim(\mathcal{V}(\Theta(\cdot;u_0)))+2,~~\text{if}~\mathcal{M}(\Theta(\cdot;u_0))=\mathcal{V}(\Theta(\cdot;u_0))\times\R,\\
			dim(\mathcal{V}(\Theta(\cdot;u_0))),~~\text{otherwise}.
		\end{array}
		\right.
		\]
	\end{defi}
	Notice that $S(\Theta(\cdot;u_0))$ is the largest $\mathcal{D}$-invariant subspace contained in $\mathcal{M}(\Theta(\cdot;u_0))$.
	\begin{defi}
		For any $0\leq k\leq n+2$, define
		\[\Sigma_k=\left\{X_0\in\Sigma: dim(\Theta(\cdot;u_0))\leq k,~\forall u_0\in\mathcal{T}(u,X_0)\right\}.\]
	\end{defi}
	By definition, we have $\Sigma_0\subset \cdots \subset \Sigma_{n+2}$. Our main result on this stratification is
	\begin{thm}\label{thm stratification}
		\begin{enumerate}
			\item  $\Sigma_{n+2}\setminus\Sigma_n=\emptyset$, that is, $\Sigma=\Sigma_n$.
			\item For any $0\leq k\leq n$, $dim_{\mathcal{P}}(\Sigma_k)\leq k$, and $\Sigma_0$ is discrete.
			\item For $\mathcal{L}^1$ a.e. $t$, the set
			\[ S_k(t):=\left\{x: (x,t)\in\Sigma_k\right\}\]
			has Hausdorff dimension at most $k-2$.
		\end{enumerate}
	\end{thm}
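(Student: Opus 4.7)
My plan is to apply Federer's dimension reduction principle in the abstract form developed by White \cite{White1997}. All the analytical ingredients needed to fit into this framework are already in place: compactness and convergence of blow-up sequences (Theorem \ref{thm 1.6}), existence and backward self-similarity of tangent functions (Proposition \ref{prop backward self-similar}), upper semi-continuity of the density $\Theta$ (Proposition \ref{pro 5.2}), and the backward-conelike structure of $\Theta(\cdot;u_0)$ for any tangent $u_0$ (Propositions \ref{prop 6.4} and \ref{prop max density points}). The only step requiring genuinely new analysis is the top-stratum Liouville statement in part (1).

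\textbf{Part (1): $\Sigma=\Sigma_n$.} I would rule out $\dim(S(\Theta(\cdot;u_0)))\ge n+1$ for every $X_0\in\Sigma$ and every $u_0\in\mathcal{T}(u,X_0)$. By Proposition \ref{prop 6.7} and Definition \ref{def 6.8}, the only way to attain spine dimension $n+2$ (resp.\ $n+1$) is $\mathcal{V}(\Theta(\cdot;u_0))=\R^n$ (resp.\ $(n-1)$-dimensional) together with $\mathcal{M}=\mathcal{V}\times\R$, which means $u_0$ is translation invariant in all of $\mathcal{V}$ and time independent. In the first case $u_0$ is constant, and backward self-similarity $u_0(x,t)=\lambda^{-\alpha}u_0(\lambda x,\lambda^2 t)$ then forces $u_0\equiv 0$, contradicting the non-degeneracy estimate \eqref{es4}. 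In the second case, $u_0=v(x_n)$ up to rotation, with $v\ge 0$, $v(0)=0$, $\alpha$-homogeneous, and satisfying $v''=v^{-p}$ on $\{v>0\}$. Writing $v(x_n)=A_{\pm}|x_n|^{\alpha}$ for $\pm x_n>0$ and substituting into the ODE yields $A_{\pm}^{p+1}=[\alpha(\alpha-1)]^{-1}<0$ (since $\alpha=2/(p+1)\in(0,1)$), which is impossible; hence $v\equiv 0$ and we again contradict \eqref{es4}.

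\textbf{Parts (2) and (3).} With part (1) in hand, the density $\Theta(\cdot;u)$ satisfies the abstract hypotheses of \cite[Theorem 2.3]{White1997} with parabolic scaling, and the bound $\dim_{\mathcal{P}}(\Sigma_k)\le k$ follows directly. The discreteness of $\Sigma_0$ is obtained from upper semi-continuity combined with the observation that at a point of $\Sigma_0$ every tangent density is strictly maximal, so no other singular points can accumulate. For part (3), I would invoke \cite[Theorem 8.2]{White1997}, which provides exactly the $k-2$ drop in Euclidean Hausdorff dimension upon intersecting with a time slice; this drop reflects the parabolic-versus-Euclidean scaling, since a parabolic cylinder of radius $r$ scales as $r^{n+2}$ in $\R^{n+1}$ but its fixed-time slices scale as $r^n$.

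\textbf{Main obstacle.} The principal technical difficulty is the Liouville dichotomy in part (1): one must translate the abstract translation-and-time invariance properties of $u_0$ into an actual one-dimensional ODE on $\R$ across the zero set of $v$, using the stationary and suitable-weak-solution structure (Definitions \ref{def 1.5} and \ref{stationary two-valued caloric fct}) to justify that $v^{-p}$ is integrated against the test function correctly on $\{v=0\}$. Once this ODE is in place, the sign obstruction $\alpha(\alpha-1)<0$ gives an immediate contradiction. The rest of the proof is essentially bookkeeping to verify that our setup meets the formal hypotheses of White's abstract stratification and slicing theorems.
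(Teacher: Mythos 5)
Your proposal is correct and, at the global level, takes the same route as the paper (Federer--White dimension reduction, with all hypotheses supplied by Theorem \ref{thm 1.6}, Propositions \ref{pro 5.2}, \ref{prop backward self-similar}, \ref{prop 6.4}, \ref{prop max density points}); parts (2)--(3) are treated exactly as in the paper, by deferring to \cite[Theorems 2.3 and 8.2]{White1997}. The genuine difference is in the key step of part (1), the exclusion of spine dimension $n+1$. The paper does this via Case 3 of Lemma \ref{lem no ODE blow up}: if $\mathcal{M}(\Theta(\cdot;u_0))=\R^{n-1}\times\R$, then Proposition \ref{prop max density points}(3) forces $u_0\equiv 0$ on $\mathcal{V}\times(-\infty,0]$, a set of parabolic dimension $n+1$, contradicting $\mathcal{P}^{n+\alpha}(\{u_0=0\}\cap Q_1)=0$ from Lemma \ref{lemma 4.3}. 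You instead use that such a $u_0$ is time independent and $\mathcal{V}$-invariant, hence a one-dimensional profile $v(x_n)$ which is exactly $\alpha$-homogeneous by backward self-similarity, and plug $v=A_{\pm}|x_n|^{\alpha}$ into $v''=v^{-p}$ on $\{v>0\}$ (the exponents match since $\alpha-2=-p\alpha$) to get $A_{\pm}^{p+1}=[\alpha(\alpha-1)]^{-1}<0$, so $v\equiv 0$, contradicting the nondegeneracy \eqref{es4}; this is sound, and the ``main obstacle'' you worry about (testing across $\{v=0\}$) does not actually arise, since the ODE is only needed on the positivity set where $u_0$ is smooth. Your route is more elementary and self-contained (it uses neither Lemma \ref{lemma 4.3} nor any 1D classification), and it is close in spirit to the paper's Case 2, where \cite[Theorem 10.3.3]{Esposito-Ghoussoub-Guo} is invoked for merely self-similar 1D profiles --- in your situation time independence upgrades self-similarity to homogeneity, so no citation is needed. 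What the paper's lemma buys in exchange is that it simultaneously excludes $\mathcal{M}=\R^{n}\times\{0\}$ and $\R^{n-1}\times\{0\}$, which is exactly what is needed later for the slice stratification (Theorem \ref{thm stratificatoin for slice}); your part (1) argument does not cover those cases, though they are not required for the present theorem. One minor point: your justification of the discreteness of $\Sigma_0$ is informal, but this too is part of White's abstract conclusion, which is the level of detail the paper itself adopts.
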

	The proof of this theorem is almost the same with the one for \cite[Theorem 8.2]{White1997}. The only property needed to check is the first point. The fact that $\Sigma_{n+2}\setminus\Sigma_{n+1}=\emptyset$ is trivial, because any $u_0\in\mathcal{T}(u,X_0)$ can not be identically zero by \eqref{es4} in Lemma \ref{lemma 4.2}. The fact that  $\Sigma_{n+1}\setminus\Sigma_{n}=\emptyset$ is a consequence of the following lemma.
	\begin{lem}\label{lem no ODE blow up}
		For any $X_0\in \Sigma$ and $u_0\in\mathcal{T}(u,X_0)$,  $\mathcal{M}(\Theta(\cdot;u_0))$ can not be $\R^n\times\{0\}$, $\R^{n-1}\times\{0\}$ or $\R^{n-1}\times\R$.
	\end{lem}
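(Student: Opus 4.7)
The plan is to handle each of the three alternatives for $\mathcal{M}(\Theta(\cdot;u_0))$ separately, extracting the rigid form of $u_0$ from Proposition \ref{prop 6.7} and Proposition \ref{prop backward self-similar}, and then deriving a contradiction directly from the equation.

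In the case $\mathcal{M}=\mathbb{R}^n\times\{0\}$, Proposition \ref{prop 6.7} forces $u_0=u_0(t)$, and backward self-similarity pins down $u_0(t)=\bigl((p+1)(-t)\bigr)^{1/(p+1)}$ for $t\le 0$. For $t>0$, $u_0$ is a nonnegative continuous weak solution of the ODE $u_0'=-u_0^{-p}$, so on any component of $\{u_0>0\}$ the exact first integral $u_0^{p+1}(t)=u_0^{p+1}(0)-(p+1)t=-(p+1)t$ forces $u_0\equiv 0$ on $(0,+\infty)$, contradicting $u_0^{-p}\in L^1_{loc}$ in Definition \ref{def 1.4}. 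In the case $\mathcal{M}=\mathbb{R}^{n-1}\times\mathbb{R}$, $u_0=u_0(x_n)$ is a one-dimensional weak solution of $u_0''=u_0^{-p}$; on each component of $\{u_0>0\}$ the first integral $(u_0')^2=2C-\tfrac{2}{p-1}u_0^{1-p}$ holds. If $u_0>0$ everywhere then $(u_0')^2\ge 0$ forces $C>0$, and $(u_0')^2\to 2C$ at $\pm\infty$ produces linear growth, incompatible with the global $\alpha$-H\"older continuity since $\alpha=2/(p+1)<1$. If $u_0$ has any zero, then near it $u_0^{1-p}\to+\infty$ drives the right-hand side to $-\infty$, contradicting $(u_0')^2\ge 0$.

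The case $\mathcal{M}=\mathbb{R}^{n-1}\times\{0\}$ is the most delicate: here $u_0=f(x_n,t)$ is a one-dimensional parabolic MEMS solution, backward self-similar at $(0,0)$ and not time-invariant. The plan is Federer-style dimension reduction at a rupture point outside $\mathcal{M}$. If the 2D rupture set $\{f=0\}$ contains a point $(y_n^*,s^*)\ne(0,0)$, I blow $u_0$ up at $X_*=(0,\ldots,0,y_n^*,s^*)\in\{u_0=0\}\setminus\mathcal{M}$; by Theorem \ref{thm 1.6} the tangent function $v_0$ inherits $\mathbb{R}^{n-1}$-translation invariance from $u_0$. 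The key step is to compare the blow-ups at $X_*$ and at $(1+\epsilon)\cdot X_*:=((1+\epsilon)y_n^*,(1+\epsilon)^2s^*)$, which sits on the same parabolic ray through the origin and hence carries the same density by Corollary \ref{coro 6.3}: linking $\epsilon\sim\lambda$ as $\lambda\to 0$ extracts an extra $y_n$-translation invariance of $v_0$ when $s^*=0$, while linking $\epsilon\sim\lambda^2$ extracts an extra $t$-translation invariance of $v_0$ when $s^*\ne 0$. Combined with the inherited $\mathbb{R}^{n-1}$-invariance, this places $v_0$ in case 1 (when $s^*=0$) or case 3 (when $s^*\ne 0$), both already ruled out. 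Finally, if $\{f=0\}=\{(0,0)\}$, backward self-similarity forces the initial datum $f(x_n,0)=c_\pm|x_n|^\alpha$ for $x_n\gtrless 0$; a direct pointwise computation gives $\partial_t f(x_n,0^+)=\bigl[c\alpha(\alpha-1)-c^{-p}\bigr]|x_n|^{-\alpha p}$, which is strictly negative for $p>1$ since $\alpha<1$, and a barrier/comparison argument then shows that $f$ must develop new zeros along the parabolic curve $\{|x_n|\lesssim\sqrt{t}\}$ for small $t>0$, contradicting $\{f=0\}=\{(0,0)\}$.

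The main obstacle is the second sub-case of the last case: verifying rigorously that the forward parabolic evolution from the singular scale-invariant datum $c|x_n|^\alpha$ cannot remain strictly positive. This will require either a careful barrier construction matched to the scaling of the datum, or a nonexistence argument for the forward self-similar profile ODE with the required asymptotics $\psi(y)\sim c_\pm|y|^\alpha$ as $y\to\pm\infty$ and $\psi(0)>0$.
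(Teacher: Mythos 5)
Your treatment of the first and third alternatives is essentially sound, and in fact for $\mathcal{M}(\Theta(\cdot;u_0))=\R^{n-1}\times\R$ your reduction to the one-dimensional elliptic ODE $u_0''=u_0^{-p}$ and the first-integral contradiction at the zero $x_n=0$ is a clean, more elementary route than the paper's (which instead deduces $u_0\equiv 0$ on $\R^{n-1}\times(-\infty,0]$ and contradicts the $\mathcal{P}^{n+\alpha}$-null property of $\{u_0=0\}$ from Lemma \ref{lemma 4.3}). One caveat in your first case: the translation invariance coming from Proposition \ref{prop 6.7} and Proposition \ref{prop max density points} is only asserted for $t\le 0$, so you are not entitled to say $u_0=u_0(t)$ solves the ODE for $t>0$. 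This is repairable: from $u_0(\cdot,0)\equiv 0$ (which also follows directly from Proposition \ref{pro 5.3}, since $\mathcal{M}\subset\{u_0=0\}$) the paper uses the Duhamel representation for the $L^1$ weak solution on $t>0$ to get $u_0<0$ there, contradicting nonnegativity; your endgame (vanishing on an open set contradicts $u_0^{-p}\in L^1_{loc}$) also works once one knows $u_0\le 0$ for $t>0$.

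The genuine gap is the case $\mathcal{M}(\Theta(\cdot;u_0))=\R^{n-1}\times\{0\}$, which is the crux of the lemma, and your plan does not close it. You yourself flag the unresolved sub-case (positivity of the forward evolution from the datum $c_\pm|x_n|^\alpha$), but the difficulty is worse than a missing barrier: your dimension-reduction branch only functions at zeros $X_*=(y_n^*,s^*)$ with $s^*\le 0$, because Corollary \ref{coro 6.3}, Proposition \ref{prop max density points} and the inherited $\R^{n-1}$-invariance of $u_0$ are all backward statements (valid on $\R^n\times(-\infty,0]$ only); for a zero at positive time neither the density constancy along the ray nor the inherited invariance of the tangent $v_0$ is available. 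Meanwhile your branch (b) can at best produce zeros at \emph{positive} times, so the configuration ``$u_0>0$ on $\{t\le 0\}$ except at the origin'' is precisely what must be excluded and is left open by your argument even granting the barrier step. The paper avoids forward-time analysis entirely: viewing $u_0$ as a backward self-similar one-dimensional suitable weak solution on $\R\times(-\infty,0]$, it invokes the classification \cite[Theorem 10.3.3]{Esposito-Ghoussoub-Guo}, so that either $u_0=k(-t)^{\frac{1}{p+1}}$ (then $u_0\equiv 0$ at $t=0$ and the Case-1 Duhamel argument applies), or the self-similar profile $w(x)=(-t)^{-\frac{1}{p+1}}u_0(\sqrt{-t}\,x,t)$ is convex for large $|x|$, hence grows linearly, contradicting the global $\alpha$-H\"older continuity since $\alpha=2/(p+1)<1$. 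To complete your route you would need either to quote such a classification of the backward self-similar 1D profiles or to supply a nonexistence argument for a positive profile with growth $|y|^\alpha$ at infinity, in addition to repairing the positive-time-zero branch.
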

	\begin{proof}
		{\bf Case  1.} If $\mathcal{M}(\Theta(\cdot;u_0))=\R^n\times\{0\}$, then   $u_0(x,0)\equiv 0$. Hence  $u_0$ satisfies
		\begin{equation}\label{cone 10}
			\left\{
			\begin{array}{ll}
				\partial_tu_0-\Delta u_0=-u_0^{-p}~~\text{in}~\R^n\times(0,+\infty),\\
				u_0(x,0)=0~~\text{in}~\R^n.
			\end{array}
			\right.
		\end{equation}
		Because  $u_0^{-p}\in L^1_{loc}(\R^{n+1})$  and $u_0$ is globally H\"{o}lder continuous on $\R^{n+1}$, by Duhamel principle, we have the representation formula
		\[
		u_0(x,t)=-\int_0^t\int_{\R^n}u_0(x-y,t-s)^{-p}G(y,s)dyds, \quad \forall x\in\R^n, ~t>0.
		\]
		As a consequence, $u_0(x,t)<0$ for any $t>0$. This  contradicts the fact that $u_0\geq 0$.
		
		{\bf Case  2.} If $\mathcal{M}(\Theta(\cdot;u_0))=\R^{n-1}\times\{0\}$, by Proposition \ref{prop max density points}, $u_0$ can be viewed as a suitable weak solution of \eqref{eqn} in $\R\times(-\infty,0]$. Because it is backward self-similar (see Proposition \ref{prop backward self-similar}), by \cite[Theorem 10.3.3]{Esposito-Ghoussoub-Guo}, either $u_0=k(-t)^{\frac{1}{p+1}}$ or $w(x):=(-t)^{-\frac{1}{p+1}}u_0(\sqrt{-t}x,t)$ is convex in $x$ for all $|x|$ large enough. For the first possibility, using the fact that $u_0\equiv 0$ at $t=0$, we  get the same contradiction as in Case 1. The second possibility implies that $w$ grows linearly at infinity, which  contradicts with the fact that it is globally $\alpha$-H\"{o}lder continuous.
		
		{\bf Case 3.} If $\mathcal{M}(\Theta(\cdot;u_0))=\R^{n-1}\times\R$,
		by Proposition \ref{prop 6.7}, for any $a\in\R$ and $X, Y\in \R^{n-1}\times(-\infty,0]$, there holds
		\[
		u_0\left(\mathcal{D}_{a^{-1}}(X)+Y\right)=u_0(Y).
		\]
		In particular, $u_0\left(\mathcal{D}_{a^{-1}}(X)\right)=u_0(0)=0$ and thus $u\equiv0$ in $\R^{n-1}\times(-\infty,0]$, which contradicts the fact that
		\[\mathcal{P}^{n+\alpha}\left(\{u_0=0\}\cap Q_1\right)=0,\]
		see Lemma \ref{lemma 4.3}.
	\end{proof}

	\subsection{Stratification of time slices}
	In this subsection, we study the stratification  for time slices of $\Sigma$,
	\[\Sigma_t:=\Sigma\cap\{t\},~\forall t\in\R.\]
	
For this purpose, we need some results on $\Theta(\cdot,0;u_0)$, which are direct consequences of Propositions \ref{prop backward self-similar} and \ref{prop max density points}.
	\begin{lem}\label{lem 6.14}
		For any $x_0\in\Sigma_t$ and $u_0\in\mathcal{T}(u,x_0,t)$, there hold
		\begin{description}
			\item [(i)] $\Theta(\cdot,0;u_0)$ is upper semi-continuous in $x$;
			\item [(ii)] $\Theta(\lambda x,0;u_0)=\Theta(x,0;u_0)$ for any $x\in\R^n$ and $\lambda>0$;
			\item [(iii)] if $\Theta(x,0;u_0)=\Theta(0,0;u_0)$, then  for any $y\in\R^n$ and $\lambda>0$,
			\[\Theta(x+\lambda y,0;u_0)=\Theta(x+y,0;u_0)=\Theta(y,0;u_0);\]
			\item [(iv)] $
			\mathcal{V}(\Theta(\cdot;u_0))=\left\{x: \Theta(x+y,0;u_0)=\Theta(y,0;u_0),~\forall y\in\R^n\right\}$.
		\end{description}
	\end{lem}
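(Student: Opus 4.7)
The plan is to derive all four claims as direct consequences of the structure results established in the previous subsection, namely Proposition \ref{pro 5.2} for upper semi-continuity, Proposition \ref{prop backward self-similar} and Corollary \ref{coro 6.3} for the scaling invariance, Proposition \ref{prop max density points} for the behavior at maximal density points, and Proposition \ref{prop 6.7} for the identification of $\mathcal{V}$ with $\mathcal{W}$.

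For (i), I would apply Proposition \ref{pro 5.2} to the constant sequence $u_i \equiv u_0$ and a sequence of base points $(x_i, 0) \to (x, 0)$. This immediately gives $\Theta(x, 0; u_0) \geq \limsup_i \Theta(x_i, 0; u_0)$, i.e.\ upper semi-continuity in the spatial variable. For (ii), Proposition \ref{prop backward self-similar} tells us that $u_0$ is backward self-similar with respect to $(0,0)$, and Corollary \ref{coro 6.3} then yields $\Theta(\lambda x, \lambda^2 t; u_0) = \Theta(x, t; u_0)$ for all $\lambda > 0$ and $(x,t)$; specializing to $t = 0$ gives the claim.

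For (iii), the hypothesis $\Theta(x,0;u_0) = \Theta(0,0;u_0) = \Theta(0;u_0)$ places $(x,0)$ in $\mathcal{M}(\Theta(\cdot;u_0))$, so Proposition \ref{prop max density points} applies at $X = (x,0)$. Part (2) of that proposition with $Y = (y,0)$ gives $\Theta(x+\lambda y, 0; u_0) = \Theta(x+y, 0; u_0)$, since $\mathcal{D}_\lambda(y,0) = (\lambda y, 0)$. Part (3), applied with $t = 0$, yields $u_0(ax + y, s) = u_0(y, s)$ for every $a \in \R$ and every $(y,s) \in \R^n \times (-\infty, 0]$; in particular $u_0$ is translation invariant in the direction $x$ in space-time, which translates (via the definition of $E$ and hence of $\Theta$) into $\Theta(x+y, 0; u_0) = \Theta(y, 0; u_0)$. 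Combining the two equalities gives the chain stated in (iii).

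For (iv), by Proposition \ref{prop 6.7}(1), $\mathcal{V}(\Theta(\cdot;u_0)) = \mathcal{W}(\Theta(\cdot;u_0))$, so it suffices to show that $\mathcal{W}$ coincides with the set $\mathcal{U} := \{x \in \R^n : \Theta(x+y, 0; u_0) = \Theta(y, 0; u_0), \forall y \in \R^n\}$. The inclusion $\mathcal{W} \subset \mathcal{U}$ is immediate from the definition by restricting to $s = 0$. For the reverse inclusion, if $x \in \mathcal{U}$, then taking $y = 0$ gives $\Theta(x,0;u_0) = \Theta(0,0;u_0)$, so Proposition \ref{prop max density points}(3) (used exactly as in the proof of (iii)) yields $u_0(ax + z, s) = u_0(z, s)$ for all $a \in \R$ and all $(z,s)$, whence $\Theta(x + y, s; u_0) = \Theta(y, s; u_0)$ for all $(y,s)$, i.e.\ $x \in \mathcal{W}$. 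The only subtle point — and the only place that uses something beyond bookkeeping — is recognizing that the single identity at $t = 0$ upgrades, through Proposition \ref{prop max density points}(3), to full space-time translation invariance of $u_0$ in the direction $x$; everything else is definitional.
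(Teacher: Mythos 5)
Your proposal is correct and follows exactly the route the paper intends: the paper states Lemma \ref{lem 6.14} without a separate proof, as a direct consequence of Propositions \ref{prop backward self-similar} and \ref{prop max density points} (together with Proposition \ref{pro 5.2} for semi-continuity and Proposition \ref{prop 6.7} for the identification of $\mathcal{V}$ with $\mathcal{W}$), which is precisely what you spell out. The only cosmetic remark is that in (iv) the inclusion of your set $\mathcal{U}$ into $\mathcal{V}$ already follows by taking $y=0$ (placing $(x,0)$ in $\mathcal{M}\cap\{t=0\}$), so the detour through translation invariance of $u_0$ is not needed, and your convention $\mathcal{D}_\lambda(y,0)=(\lambda y,0)$ versus the paper's $(y/\lambda,0)$ is immaterial since $\lambda$ ranges over all positive reals.
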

		Following the notations in \cite[Section 3]{White1997}, $\Theta(\cdot,0;u_0)$ is a \emph{tangent cone function}, and  $\mathcal{V}(\Theta(\cdot;u_0))$ is  \emph{the translational invariance subspace} of $\Theta(\cdot,0;u_0)$, which is called \emph{the spine} of $\Theta(\cdot,0;u_0)$.

For any $k=0,\dots, n$, define the set
	\[\Sigma_k(t):=\left\{x\in\Sigma_t: dim(\mathcal{V}(\Theta(\cdot;u_0)))\leq k,~\forall u_0\in\mathcal{T}(x,t;u)\right\}.\]	
 Note  that the set $S_k(t)$ in Theorem \ref{thm stratification} is   a subset of $\Sigma_k(t)$.
	
Now we can state the result on the stratification of $\Sigma_t$.
	\begin{thm}\label{thm stratificatoin for slice}
		For any $t$, the followings hold.
		\begin{enumerate}
			\item $\Sigma_{n}(t)\setminus\Sigma_{n-2}(t)=\emptyset$ and thus $\Sigma_t=\Sigma_{n-2}(t)$.
			\item For each $0\leq k\leq n-2$, $dim_{\mathcal{H}}(\Sigma_k(t))\leq k$.
			\item  $\Sigma_0(t)$ is discrete.
		\end{enumerate}
	\end{thm}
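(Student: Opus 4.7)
The plan is to follow the scheme of Theorem \ref{thm stratification}, but adapted to the spatial stratification at a fixed time slice. The central object is the family of functions $x\mapsto \Theta(x,0;u_0)$ for $u_0\in\mathcal{T}(u,x_0,t)$, which by Lemma \ref{lem 6.14} satisfies the axioms of a \emph{tangent cone function} in the sense of \cite[Section 3]{White1997}: upper semicontinuity in $x$, dilation invariance $\Theta(\lambda x,0;u_0)=\Theta(x,0;u_0)$, and the spine characterization $\mathcal{V}(\Theta(\cdot;u_0))=\{x:\Theta(\cdot+x,0;u_0)\equiv\Theta(\cdot,0;u_0)\}$. Once (1) is in hand, parts (2) and (3) are instances of \cite[Theorem 3.2]{White1997}, with the required compactness of blow-ups supplied by Theorem \ref{thm 1.6}. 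So the main PDE work is concentrated in (1).

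For part (1), I need to exclude $\dim\mathcal{V}(\Theta(\cdot;u_0))\in\{n-1,n\}$ for every $u_0\in\mathcal{T}(u,x_0,t)$. When $\dim\mathcal{V}=n$, Proposition \ref{prop 6.7}(1) makes $u_0$ translation invariant in every spatial direction, so $u_0=f(t)$ with $f(0)=0$ and $f\geq 0$. The equation reduces to $f'=-f^{-p}$ on $\{f>0\}$ and the subcaloric property forces $f$ to be non-increasing; combined with $f(0)=0$ this gives $f\equiv 0$ on $\{t\geq 0\}$, contradicting $u_0^{-p}\in L^1_{loc}$ (or equivalently giving the Duhamel contradiction of Case 1 of Lemma \ref{lem no ODE blow up}). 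When $\dim\mathcal{V}=n-1$, after a rotation $u_0=u_0(x_n,t)$ is a backward self-similar, globally $\alpha$-Hölder continuous suitable weak solution on $\R\times\R$ vanishing at the origin. Invoking the one-dimensional classification \cite[Theorem 10.3.3]{Esposito-Ghoussoub-Guo} as in Case 2 of Lemma \ref{lem no ODE blow up}, either $u_0=k(-t)^{1/(p+1)}$ (reducing to the previous case) or the profile $w(x):=(-t)^{-1/(p+1)}u_0(\sqrt{-t}x,t)$ is convex with linear growth at infinity, contradicting the global $\alpha$-Hölder continuity of $u_0$.

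For part (2), the Federer-White iteration goes as follows: if $\mathcal{H}^k(\Sigma_k(t))>0$ for some $k\leq n-2$, then at a density point $x_0$ of $\Sigma_k(t)$, blow up at $(x_0,t)$ at an appropriate scale; the limit $u_0$ exists by Theorem \ref{thm 1.6}, and upper semicontinuity of $\Theta$ (Proposition \ref{pro 5.2}) together with the scaling identity $\Theta(y,0;u_k)=\Theta(x_0+\lambda_k y,t;u)$ propagates the high-density stratum into $\mathcal{V}(\Theta(\cdot;u_0))$, forcing $\dim\mathcal{V}\geq k+1$ and contradicting the definition of $\Sigma_k(t)$. Part (3) is the borderline case $k=0$: if $x_0\in\Sigma_0(t)$ has distinct neighbors $x_k\to x_0$ in $\Sigma_t$, blowing up at scale $\lambda_k=|x_k-x_0|$ yields $u_0\in\mathcal{T}(u,x_0,t)$ and a unit vector $e=\lim(x_k-x_0)/\lambda_k$; uniform convergence gives $u_0(e,0)=0$, and upper semicontinuity plus the backward conelike structure forces $\Theta(e,0;u_0)=\Theta(0;u_0)$, so $e\in\mathcal{V}(\Theta(\cdot;u_0))$, contradicting $\dim\mathcal{V}=0$.

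The main obstacle, reappearing in both (2) and (3), is verifying that accumulated singular points in the rescaled limit lie in $\mathcal{V}$, i.e.\ that $\Theta(\cdot,0;u_0)$ attains the maximum $\Theta(0;u_0)$ on them. Upper semicontinuity alone only yields an inequality; the equality requires combining Propositions \ref{prop backward self-similar} and \ref{prop max density points} (the backward self-similar and translation-invariant structure at maximum-density points) with the non-degeneracy estimate \eqref{es4} (which prevents density drops along $\Sigma$). This is precisely the axiomatic content encoded in White's tangent cone function framework, and once that verification is carried out, the slice stratification follows mechanically from \cite[Theorem 3.2]{White1997}.
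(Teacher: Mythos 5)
Your proposal is correct and follows essentially the same route as the paper: the only genuine PDE content of part (1) is to exclude $\dim\mathcal V(\Theta(\cdot;u_0))\in\{n-1,n\}$, which the paper handles via Lemma \ref{lem no ODE blow up}, while parts (2) and (3) are an application of White's abstract stratification theorem once the tangent-cone-function axioms (Lemma \ref{lem 6.14}) and compactness (Theorem \ref{thm 1.6}) are in place. Your ODE/subcaloric-monotonicity argument for the $\dim\mathcal V=n$ case is a minor but equivalent variant of the paper's Duhamel argument, and your merging of Cases 2 and 3 of Lemma \ref{lem no ODE blow up} (via the one-dimensional classification, which covers the time-independent homogeneous profile as well) is harmless.
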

	The proof is the same with the one for \cite[Theorem 3.2]{White1997}, except that the first point is a consequence of
	Lemma \ref{lem no ODE blow up}.

	\begin{appendices}
		\section{A Liouville theorem}\label{appendix Liouville}
		\setcounter{equation}{0}
		
		In this appendix, we give a proof of Theorem \ref{thm Liouville}. The proof uses blow down analysis with  the help of Almgren monotonicity formula and the global H\"{o}lder continuity of the solution. 
  
  In this appendix, $u$ denotes a globally $\gamma$-continuous, stationary, two-valued caloric function in $\R^n\times(-\infty,0]$, where $\gamma\in(0,1)$ is a constant. If $\{u=0\}=\emptyset$, $u$ is a positive, globally H\"{o}lder continuous solution to the  heat equation, so by the generalized Liouville theorem, $u$ is a constant. Thus, for the remainder of the proof, we assume $\{u=0\}\neq \emptyset$, say $u(0,0)=0$, and we will prove $u\equiv0$. 
		
		For any $(x_0,t_0)\in\R^n\times (-\infty,0]$ and $s>0$, define
		\begin{equation*}\label{H funcitonal}
			H(s;x_0,t_0):=\int_{\mathbb{R}^{n}}u(x_0+x,t_0-s)^{2}G(x,s) dx,
		\end{equation*}
		and
		\begin{equation*}\label{D functional}
			D(s;x_0,t_0) := s\int_{\mathbb{R}^{n}}|\nabla u(x_0+x, t_0-s)|^{2}G(x,s)dx,
		\end{equation*}
		where $G$ is the standard heat kernel. The frequency is defined to be
		\begin{equation*}\label{N functional}
			N(s;x_0,t_0):=\frac{D(s;x_0,t_0)}{H(s;x_0,t_0)}=\frac{ s\int_{\mathbb{R}^{n}}|\nabla u(x_0+x, t_0-s)|^{2}G(x,s)dx}{\int_{\mathbb{R}^{n}}u(x_0+x,t_0-s)^{2}G(x,s) dx}.
		\end{equation*}

		We first establish the Almgren monotonicity formula. For simplicity, we will assume $(x_0,t_0)=(0,0)$.
		\begin{lem}\label{lem H derivative}
			For any $s>0$,
			\begin{equation}\label{A4}
				H^\prime(s)=\int_{\R^n}2u\left(-\partial_tu+\frac{x\cdot\nabla u}{2s}\right)Gdx=\frac{2D(s)}{s}.
			\end{equation}
			Hereafter $u$ is always evaluated at $(x,-s)$ and $G$ is evaluated at $(x,s)$.
		\end{lem}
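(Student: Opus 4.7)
The plan is to prove both equalities by direct calculation, using the defining properties of a stationary two-valued caloric function together with the standard identities $\nabla G = -\tfrac{x}{2s}G$ and $\partial_s G = \Delta G$.

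First I would differentiate $H(s)$ under the integral sign:
\[
H^\prime(s) = -2\int_{\R^n} u\,\partial_t u\, G\, dx + \int_{\R^n} u^2\, \partial_s G\, dx.
\]
For the second term I would replace $\partial_s G$ by $\Delta G$ and integrate by parts twice (or once on $\nabla(u^2)$): using $\nabla G = -\tfrac{x}{2s}G$ one obtains
\[
\int_{\R^n} u^2\,\Delta G\, dx = -\int_{\R^n} 2u\,\nabla u\cdot \nabla G\, dx = \int_{\R^n} \frac{u\,(x\cdot\nabla u)}{s}\, G\, dx.
\]
Combining these two pieces gives the first equality in \eqref{A4}.

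Next, for the identity $H^\prime(s) = 2D(s)/s$, it suffices to show
\[
\int_{\R^n} u\!\left(-\partial_t u + \tfrac{x\cdot\nabla u}{2s}\right) G\, dx = \int_{\R^n}|\nabla u|^2\, G\, dx.
\]
This is precisely the weak form of $u(\partial_t u - \Delta u) = 0$ from property (iii) of Definition \ref{stationary two-valued caloric fct}, applied with a test function $\eta$ for which $\eta^2$ equals the heat kernel $G(\cdot,s)$. Indeed, with such $\eta$ one has $2\eta\nabla\eta = \nabla G = -\tfrac{x}{2s}G$, so property (iii) reads
\[
\int_{\R^n}\partial_t u\, u\, G + |\nabla u|^2\, G - \frac{u\,(x\cdot\nabla u)}{2s}\, G\, dx = 0,
\]
which rearranges to the displayed identity and closes the argument.

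The main obstacle is justifying the use of the non-compactly-supported test function $G$ in property (iii) (and the integration by parts in $\int u^2\Delta G$). The plan here is a standard cutoff/approximation argument: pick $\eta_R$ with $\eta_R^2 = G\,\chi_R^2$ for a smooth cutoff $\chi_R$ supported in $B_{2R}$ and equal to $1$ on $B_R$, apply (iii), and let $R\to\infty$. Convergence of the interior integrals follows from the Gaussian decay of $G$ and the fact that $u$ grows at most like $|x|^\gamma$ (hence $u^2 G$ and $u\,|x|\,|\nabla u|\,G$ are integrable), while the cutoff error terms involve integrals against $\chi_R|\nabla\chi_R|$ supported in $\{R\leq |x|\leq 2R\}$, which vanish as $R\to\infty$ by the same Gaussian decay combined with the local $L^2$ bounds on $\nabla u$ (condition (i) of Definition \ref{stationary two-valued caloric fct}) and the polynomial growth of $u$. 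The differentiation under the integral sign in the first step is justified analogously, after noting that the $s$-derivative of $G$ decays with the same Gaussian factor.
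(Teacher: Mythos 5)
Your proof follows essentially the same route as the paper: differentiate $H$ under the integral sign (using that $u$ is evaluated at $(x,-s)$, so $\tfrac{\partial}{\partial s}u(x,-s)=-\partial_t u$), replace $\partial_s G$ by $\Delta G$, integrate by parts once using $\nabla G=-\tfrac{x}{2s}G$ to get the first equality, and then invoke the weak identity (iii) from Definition~\ref{stationary two-valued caloric fct} with $\eta^2=G$ to convert $\int 2u\left(-\partial_t u+\tfrac{x\cdot\nabla u}{2s}\right)G$ into $2\int|\nabla u|^2G=2D(s)/s$. The paper's one-line proof does not mention condition (iii), but it is indeed needed for the second equality, and your explicit treatment of this step and of the cutoff approximation (since $G$ is not compactly supported) makes the argument more transparent. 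One small point worth tightening: condition (iii) is stated as a \emph{space-time} integral over $\Omega\subset\R^{n+1}$ with $\eta\in C_0^\infty(\Omega)$, whereas the identity you need lives on a single time slice. To pass rigorously to a fixed-$s$ statement, test (iii) with a product $\eta(x,t)=\phi(x)\psi(t)$, $\psi$ a temporal bump, and conclude the slice-wise identity for a.e.\ $t$; your cutoff $\eta_R$ should therefore also be localized in time, not only in space. This gives the lemma for a.e.\ $s>0$, which is what the ensuing Almgren monotonicity argument actually uses (and upgrades to all $s$ once the regularity of $u$ is known). Otherwise the argument is correct.
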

		\begin{proof}
			This follows from a direct differentiation and then an integration by parts,  using the fact that $\nabla G(x,s)=-\frac{x}{2s}G(x,s)$.
		\end{proof}
		\begin{coro}\label{coro derivative of log H}
			For any $s>0$, if $H(s)\neq 0$, then
			\begin{equation}
				\frac{d}{ds}\log H(s)=\frac{2N(s)}{s}.
			\end{equation}
		\end{coro}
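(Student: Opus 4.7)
The corollary is an almost immediate consequence of the preceding Lemma \ref{lem H derivative}, so the proof plan is very short. My plan is to invoke the chain rule for logarithmic differentiation together with the explicit formula for $H'(s)$ just obtained.

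First I would note that the assumption $H(s)\neq 0$ (combined with the continuity of $H$, which follows from $u\in L^1_{\mathrm{loc}}$ and dominated convergence against the Gaussian weight $G$) is precisely what is needed to justify writing $\log H(s)$ and to apply the formula $\frac{d}{ds}\log H(s) = H'(s)/H(s)$ at the point $s$.

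Next I would substitute the identity from Lemma \ref{lem H derivative}, namely
\[
H'(s) = \frac{2D(s)}{s},
\]
which gives
\[
\frac{d}{ds}\log H(s) = \frac{H'(s)}{H(s)} = \frac{2D(s)}{s\,H(s)}.
\]
Finally I would recognize the fraction $D(s)/H(s)$ as the frequency $N(s)$ by definition, producing the claimed formula $\frac{d}{ds}\log H(s) = \frac{2N(s)}{s}$.

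There is no genuine obstacle here: the content of the corollary is purely a rewriting of Lemma \ref{lem H derivative} using the definition of $N$. The only subtlety worth mentioning in the proof is that differentiability of $H$ at a specific $s>0$ is a byproduct of the integration-by-parts carried out in Lemma \ref{lem H derivative} (which produces a pointwise formula for $H'(s)$), so no separate regularity argument is needed.
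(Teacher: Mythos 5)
Your proposal is correct and is essentially the only argument available: it combines $H'(s)=2D(s)/s$ from Lemma~\ref{lem H derivative} with the logarithmic derivative and the definition $N=D/H$, which is exactly what the paper intends (the corollary is stated without proof precisely because it is this immediate rewriting).
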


		\begin{lem}\label{lem D derivative}
			For any $s>0$,
			\begin{equation}\label{A5}
				D^\prime(s)\geq 2s\int_{\R^n}\left(\partial_tu-\frac{x\cdot\nabla u}{2s}\right)^2G dx.
			\end{equation}
		\end{lem}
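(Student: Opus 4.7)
The plan is to differentiate $D(s) = sI(s)$, where $I(s) := \int_{\R^n}|\nabla u(x,-s)|^2 G(x,s)\,dx$, bound $I'(s)$ from below via the localized energy inequality (v) applied with $\eta^2 \approx G(x,-t)$, and cancel the remaining cross terms using the stationary condition (iv) with $Y \approx xG$ at the slice $t=-s$. The target square $2s\bigl(\partial_t u - \tfrac{x\cdot\nabla u}{2s}\bigr)^2 G$ should then emerge by direct algebraic completion.

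Concretely, choosing $\eta^2 = G(x,-t)$ in (v) gives $2\eta\partial_t\eta = -\partial_s G$ and $2\eta\nabla\eta = \nabla G$. Substituting and changing variables $t \mapsto -s$ yields
\[I'(s) \geq 2\int (\partial_t u)^2 G\,dx + \int |\nabla u|^2 \partial_s G\,dx + 2\int \partial_t u\,\nabla u\cdot\nabla G\,dx.\]
Using $\nabla G = -\tfrac{x}{2s}G$ and $s\partial_s G = -\tfrac{n}{2}G + \tfrac{|x|^2}{4s}G$, the identity $D'(s) = I(s) + sI'(s)$ becomes
\[D'(s) \geq \bigl(1-\tfrac{n}{2}\bigr)\!\int|\nabla u|^2 G + 2s\!\int(\partial_t u)^2 G + \tfrac{1}{4s}\!\int|x|^2|\nabla u|^2 G - \!\int \partial_t u\,(x\cdot\nabla u)\,G\,dx.\]
Separately, applying (iv) at the slice $t=-s$ with $Y = xG$ (using $\operatorname{div}(xG) = nG - \tfrac{|x|^2}{2s}G$ and $DY(\nabla u,\nabla u) = |\nabla u|^2 G - \tfrac{(x\cdot\nabla u)^2}{2s}G$) produces the algebraic identity
\[\int\!\Bigl[(n-2)|\nabla u|^2 G - \tfrac{|x|^2}{2s}|\nabla u|^2 G + \tfrac{1}{s}(x\cdot\nabla u)^2 G - 2\partial_t u\,(x\cdot\nabla u)G\Bigr]dx = 0.\]
Adding one half of this identity to the lower bound for $D'(s)$ cancels the $|x|^2|\nabla u|^2$ and surplus $|\nabla u|^2 G$ terms exactly, leaving precisely $2s\int\bigl(\partial_t u - \tfrac{x\cdot\nabla u}{2s}\bigr)^2 G\,dx$, which is the claim.

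The main technical obstacle is that (iv) and (v) require compactly supported test functions, whereas $G$ has only Gaussian decay and (iv) is a space-time identity rather than a single-slice identity. I would resolve both issues simultaneously by introducing a spatial cutoff $\psi_R \in C_0^\infty(\R^n)$ with $\psi_R \equiv 1$ on $B_R$ and $|\nabla\psi_R| \leq C/R$, together with a temporal mollifier $\chi_\varepsilon \in C_0^\infty(\R)$ approximating $\delta_{-s}$; the computation above is then carried out with $\eta^2(x,t) = \psi_R(x)^2 G(x,-t)\chi_\varepsilon(t+s)$ and $Y(x,t) = x\,\psi_R(x)^2 G(x,-t)\chi_\varepsilon(t+s)$, after which we send $R\to\infty$ and $\varepsilon\to 0$. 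Error terms from the cutoff live on $|x|\sim R$, where $G(x,s) \lesssim e^{-R^2/(4s)}$, so they vanish exponentially in $R$; the $L^2_{loc}$ hypothesis on $\nabla u, \partial_t u$ together with the global $\gamma$-H\"older bound on $u$ (giving at most polynomial growth) keep all weighted integrals finite and make the limits rigorous. The resulting inequality is to be understood distributionally in $s$, consistent with the distributional form of (v).
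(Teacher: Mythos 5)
Your proposal is correct and follows essentially the same route as the paper: differentiate $D(s)=sI(s)$, apply the localized energy inequality with $\eta^2\approx G$ to bound $sI'(s)$, and feed in the stationary identity with $Y=xG$ (the paper's identity~(A9)) to cancel the surviving $|\nabla u|^2$ and $|x|^2|\nabla u|^2$ terms and leave the perfect square. Your handling of the technical point that (iv) is a space-time identity—via a spatial cutoff plus a temporal mollifier $\chi_\varepsilon$—is in fact slightly more careful than the paper, which introduces the cutoff $\rho_m\sqrt{G}$ for (v) but invokes the stationary condition directly as a slice-wise identity.
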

		\begin{proof}
			Direct calculation gives
			\begin{equation}\label{A6}
				D^\prime(s)=\int_{\R^n} |\nabla u|^{2}G+s\frac{\mathrm{d}}{\mathrm{d} s}\int |\nabla u|^{2}G.
			\end{equation}
			For $m\geq 1$, choose $\rho_m\in C^\infty_0(B_{1+m})$ such that
			\begin{equation*}\label{A7}
				\rho_m=\left\{
				\begin{array}{ll}
					1,~\text{if}~|x|\leq 1,\\
					1+\frac{1-|x|}{m},~\text{if}~1\leq |x|\leq 1+m,\\
					0,~\text{if}~|x|\geq 1+m.
				\end{array}
				\right.
			\end{equation*}
			Clearly, $\rho_m$ converges to the constant function $1$. Let $\eta_m=\rho_m \sqrt{G}$. Substituting $\eta_m$ into the localized energy inequality \eqref{localized energy inequality 2} and letting $m\to+\infty$, we obtain
			\begin{equation}\label{A8}
				\begin{split}
					\frac{\mathrm{d}}{ds}\int |\nabla u|^{2}G &\geq  2\int|\partial_tu|^2G+2\int \partial_tu( \nabla G\cdot\nabla u)-\int \partial_s G|\nabla u|^{2}\\
					&=  2\int|\partial_tu|^2G+2\int \partial_t u \frac{x\cdot\nabla u}{2s}G-\int \Delta  G|\nabla u|^{2}.
				\end{split}
			\end{equation}
	Substituting $Y=xG$ into the stationary condition \eqref{stationary condition 2}, we get
			\begin{equation}\label{A9}
				\begin{split}
					\int\Delta  G|\nabla u|^2 &= s^{-1}\int \left[|\nabla u|^2-\frac{(x\cdot\nabla u)^2}{2s}+(x\cdot\nabla u) \partial_tu\right]G dx.
				\end{split}
			\end{equation}
			As a result, \eqref{A5} follows from \eqref{A6}, \eqref{A8} and \eqref{A9}.
		\end{proof}
		
		\begin{lem}[Almgren monotonicity formula]\label{lem Almgren monotonicity}
			On $(0,+\infty)$, $N(s)$ is non-decreasing in $s$. Moreover, if $N(s)\equiv d/2$ for some $d\geq0$, then $u$ is backward self-similar of degree $d$, i.e., for any $(x,t)\in\R^n\times(-\infty,0]$ and $\lambda>0$,
			\[
			u(x,t)=\lambda^{-d}u(\lambda x,\lambda^2 t).
			\]
		\end{lem}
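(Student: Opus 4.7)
The plan is to establish the monotonicity by the classical Almgren quotient-rule argument, matching the outputs of Lemmas \ref{lem H derivative} and \ref{lem D derivative} against a single Cauchy--Schwarz inequality. Using the identity $H'(s)=2D(s)/s$ from Lemma \ref{lem H derivative}, I would compute
\[ N'(s)=\frac{D'(s)}{H(s)}-\frac{D(s)H'(s)}{H(s)^2}=\frac{D'(s)}{H(s)}-\frac{2D(s)^2}{s\,H(s)^2}, \]
so the entire monotonicity reduces to proving $D'(s)\ge 2D(s)^2/(s H(s))$.

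For this, I would start from the representation
\[ \frac{D(s)}{s}=\int_{\R^n} u\Bigl(-\partial_tu+\frac{x\cdot\nabla u}{2s}\Bigr)G\,dx \]
furnished by Lemma \ref{lem H derivative}. Cauchy--Schwarz gives
\[ \Bigl(\frac{D(s)}{s}\Bigr)^2\le H(s)\int_{\R^n}\Bigl(\partial_tu-\frac{x\cdot\nabla u}{2s}\Bigr)^2 G\,dx, \]
and Lemma \ref{lem D derivative} bounds the last integral above by $D'(s)/(2s)$. Combining gives exactly $D'(s)\ge 2D(s)^2/(s H(s))$, and plugging this into the formula for $N'(s)$ yields $N'(s)\ge 0$.

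For the rigidity case $N(s)\equiv d/2$, both inequalities above must be equalities on $(0,+\infty)$. The equality in Cauchy--Schwarz produces a measurable function $\lambda(s)$ such that
\[ \partial_tu(x,-s)-\frac{x\cdot\nabla u(x,-s)}{2s}=\lambda(s)\,u(x,-s)\quad\text{for a.e.\ }x. \]
Testing against $u(x,-s)G(x,s)$ and integrating in $x$, the left-hand side equals $-D(s)/s=-(d/2s)H(s)$ by Lemma \ref{lem H derivative}, while the right-hand side equals $\lambda(s)H(s)$; hence $\lambda(s)=-d/(2s)$. Rewriting with $t=-s$ yields the Euler-type first-order relation
\[ 2t\,\partial_tu+x\cdot\nabla u=d\,u\quad\text{a.e.\ in }\R^n\times(-\infty,0). \]
Setting $v(\lambda):=\lambda^{-d}u(\lambda x,\lambda^2 t)$, a direct differentiation shows $v'(\lambda)=\lambda^{-d-1}\bigl[-du+\lambda x\!\cdot\!\nabla u+2\lambda^2 t\,\partial_tu\bigr]$ evaluated at $(\lambda x,\lambda^2 t)$, which vanishes by the Euler relation; integrating in $\lambda$ gives the asserted $u(x,t)=\lambda^{-d}u(\lambda x,\lambda^2 t)$.

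The only genuinely delicate point is the regularity issue in this last integration, since the Euler relation is only an a.e.\ identity and $u$ is a priori merely $L^1_{\mathrm{loc}}$ with $\nabla u,\partial_tu\in L^2_{\mathrm{loc}}$. I would circumvent this by reading $v'(\lambda)=0$ distributionally in $\lambda$ for a.e.\ $(x,t)$, which gives the self-similar identity almost everywhere, and then invoking the global $\gamma$-H\"older continuity of $u$ assumed in the hypothesis to upgrade it to a pointwise identity on all of $\R^n\times(-\infty,0]$.
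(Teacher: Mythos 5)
Your proof is correct and follows essentially the same route as the paper: the quotient-rule reduction of $N'(s)\ge 0$ to a single Cauchy--Schwarz inequality matching the outputs of Lemma \ref{lem H derivative} and Lemma \ref{lem D derivative}, then the equality case yielding an Euler-type first-order relation that integrates along parabolic dilation rays to give self-similarity. The only cosmetic difference is that you pin down the proportionality factor $\lambda(s)=-d/(2s)$ immediately by testing against $uG$, whereas the paper leaves it as an undetermined $h(t)$, integrates to $u(x,t)=\Psi(t)u(x/\sqrt{-t},-1)$, and recovers $\Psi(t)=(-t)^{d/2}$ a posteriori by substituting back into $N\equiv d/2$; both likewise handle the a.e.\ identity and the final upgrade via continuity of $u$ in the same way.
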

		\begin{proof}
			Since
			\[N^\prime(s)=\frac{D^\prime(s)H(s)-D(s)H^\prime(s)}{H(s)^2},\]
			it suffices to show $D^\prime(s)H(s)-D(s)H^\prime(s)\geq0$. By Lemma \ref{lem H derivative} and Lemma \ref{lem D derivative},
			\begin{eqnarray*}\label{A10}
				&&D^{\prime}(s)H(s)-D(s)H^{\prime}(s) \\
				&\geq &
				2s\left[\left (\int \left(\partial_tu-\frac{x\cdot\nabla u}{2s}\right)^2 G\right)\left(\int u^{2} G\right)-\left(\int u\left(\partial_{t}u-\frac{x\cdot\nabla u}{2s}\right)G \right)^2\right]\\
				&\geq &0,
			\end{eqnarray*}
			where we have used the Cauchy-Schwarz inequality to deduce the last inequality.
			
			If $N(s)\equiv d/2$, by characterization of the equality case in the Cauchy-Schwarz inequality,  for any $t<0$, there exists an $h(t)$ such that
			\[u(x,t)=h(t)\left[2t\partial_tu(x,t)+x\cdot\nabla u(x,t)\right] \quad \text{for a.e. }~~ x\in \R^n.\]
			This is a differential equation in the time variable $t$. An integration shows that there exists a function $\Psi(t)$ such that for a.e. $x\in\R^n$,
			\begin{equation}\label{A10.1}
				u(x,t)\equiv \Psi(t) u\left(\tfrac{x}{\sqrt{-t}},-1\right).
			\end{equation}
			By the continuity of $u$, this holds for any $x\in\R^n$.

			Finally, substituting \eqref{A10.1} into the equation $N(s)\equiv d/2$, we get
			\[\Psi(t)\equiv (-t)^{\frac{d}{2}},\]
			which implies that $u$ is backward self-similar with respect to $(0,0)$.
		\end{proof}
		\begin{rmk}[Unique continuation property]
			Combining the above Almgren monotonicity formula with Corollary \ref{coro derivative of log H}, we can show as in Caffarelli-Karakhanyan-Lin \cite[Theorem 30]{Caffarelli-Karakhanyan-Lin}   that  unless $u\equiv 0$, then for any $s>0$, $H(s)>0$.
		\end{rmk}
		
		\begin{lem}\label{lem A4}
			For any $(x_0,t_0)\in\{u=0\}$ and $s\geq 0$, $N(s;x_0,t_0)= \gamma/2$.
		\end{lem}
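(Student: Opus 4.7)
The plan is to combine the two-sided control on $H$ coming from the Almgren monotonicity formula with a polynomial upper bound $H(s) \leq C s^{\gamma}$ coming from the global $\gamma$-Hölder decay of $u$ at the zero $(x_0,t_0)$. Because both the Almgren functionals and the stationary two-valued caloric property are translation invariant, we reduce to $(x_0,t_0)=(0,0)$ with $u(0,0)=0$. The hypothesis $u\in C^{\gamma,\gamma/2}$ then gives $u(x,-s)^{2}\le C(|x|^{2}+s)^{\gamma}$, and the substitution $x=\sqrt{s}\,y$ in the heat kernel immediately yields
\[
H(s)\le Cs^{\gamma}\int_{\R^{n}}(|y|^{2}+1)^{\gamma}G(y,1)\,dy =: C's^{\gamma}\qquad\text{for all }s>0.
\]

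The next step is to upgrade the monotonicity of $N$ (Lemma~\ref{lem Almgren monotonicity}) into a two-sided comparison for $H$. By Corollary~\ref{coro derivative of log H}, $(\log H)'(s)=2N(s)/s$ whenever $H(s)>0$; and the unique continuation property in the remark following Lemma~\ref{lem Almgren monotonicity} guarantees $H(s)>0$ for every $s>0$ in the nontrivial case $u\not\equiv 0$ relevant to Theorem~\ref{thm Liouville}. Integrating this identity from $s$ to $S$ and bounding $N(\tau)$ between $N(s)$ and $N(S)$ for $\tau\in[s,S]$ yields
\[
H(S)\left(\frac{s}{S}\right)^{2N(S)}\ \le\ H(s)\ \le\ H(S)\left(\frac{s}{S}\right)^{2N(s)},\qquad 0<s\le S.
\]

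To conclude that $N\equiv\gamma/2$, I would use each half of this comparison in opposite regimes. For the bound $N(s)\le\gamma/2$: fix $s$ and let $S\to\infty$. The right inequality rearranges to $H(s)(S/s)^{2N(s)}\le H(S)\le CS^{\gamma}$; if $2N(s)>\gamma$ this forces $H(s)\le 0$ as $S\to\infty$, contradicting $H(s)>0$. For the bound $N(S)\ge\gamma/2$: fix $S$ and let $s\to 0^{+}$. The left inequality combined with the Hölder bound $H(s)\le Cs^{\gamma}$ gives $H(S)S^{-2N(S)}\le Cs^{\gamma-2N(S)}$; if $2N(S)<\gamma$, the right side tends to $0$ as $s\to 0^{+}$, again contradicting $H(S)>0$. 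The equality extends to $s=0$ since $N$ is constant on $(0,+\infty)$, so $N(0):=\lim_{s\to 0^{+}}N(s)=\gamma/2$.

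The only nontrivial ingredient here is the justification of $H(s)>0$, which is required both to define $N$ globally and to legitimately apply Corollary~\ref{coro derivative of log H}. This is exactly the role of the unique continuation remark (itself a standard consequence of the Almgren formula à la Caffarelli--Karakhanyan--Lin). Everything else is a mechanical interplay between the monotonicity of $N$ and the Hölder decay of $H$, so I do not expect any real obstacle beyond bookkeeping.
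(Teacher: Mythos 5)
Your proposal is correct and follows essentially the same route as the paper: the parabolic H\"older bound $H(s)\le Cs^{\gamma}$ at the zero, combined with the Almgren monotonicity of $N$ and integration of $(\log H)'(s)=2N(s)/s$ (justified by $H>0$ via unique continuation), ruling out $N(s)>\gamma/2$ at large scales and $N(s)<\gamma/2$ at small scales. The only cosmetic difference is that you package both directions into one two-sided integrated inequality, whereas the paper runs the two contradiction arguments separately (the second via $\frac{d}{ds}\bigl(H(s)/s^{d}\bigr)\le 0$), which is equivalent.
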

		\begin{proof}
			Assume $(x_0,t_0)=(0,0)$.
			
			{\bf Step 1.} For any $s>0$, $N(s)\leq \gamma/2$.
			
			Assume by the contrary, there exists an $s_0>0$ such that $d/2:=N(s_0)> \gamma/2$. Then by Lemma \ref{lem Almgren monotonicity}, for any $s\geq s_0$, $N(s)\geq d/2$. Hence by Corollary \ref{coro derivative of log H},
			\[
			\frac{d}{ds}\mathrm{log}H(s)=\frac{2N(s)}{s} \geq \frac{d}{s}.
			\]
			Integrating from $s_0$ to $s$, we see for any $s\geq s_0$,
			\begin{equation}\label{A13}
				\frac{H(s)}{H(s_0)}\geq\left(\frac{s}{s_0}\right)^{d}.
			\end{equation}
			However, since $u$ is globally $\gamma$-H\"{o}lder continuous and $u(0,0)=0$, there exists a constant $C>0$ such that
			\[
			|u(x,-s)|^2\leq C(|x|^{2\gamma}+s^\gamma)~~\text{for any}~~x\in\R^n, ~s>0,
			\]
			which implies that for any $s>0$,
			\begin{equation}\label{A12}
				H(s)\leq Cs^\gamma \int_{\R^n}\left(1+\frac{|x|^2}{s}\right)^{\gamma}G(x,s)dx\leq Cs^\gamma.
			\end{equation}
			This is a contradiction with \eqref{A13}  if $s$ is sufficiently large.
			
			{\bf Step 2.} For any $s>0$, $N(s)\geq \gamma/2$.
			
			Assume by the contrary that there exists $s_0>0$ such that  $d/2:=N(s_0)< \gamma/2$. Then by Lemma \ref{lem Almgren monotonicity}, for any $s\in(0, s_0)$, $N(s)\leq d/2$.
			Hence by Lemma \ref{lem H derivative},
			\[
			\frac{d}{ds}\frac{H(s)}{s^d}=-ds^{-d-1}H(s)+2s^{-d-1}D(s)\leq0,
			\]
			which implies that for  $s\in(0, s_0)$,
			\begin{equation}\label{A11}
				H(s)\geq \frac{H(s_0)}{s_0^d}s^d.
			\end{equation}
			But by \eqref{A12}, $H(s)\leq Cs^\gamma$. If $s$ is close to zero, this is a contradiction.
		\end{proof}
		\begin{lem}\label{A.6}
			Either the zero set $\{u=0\}$ is a linear subspace of $\R^n\times\{0\}$ and $u$ is translational invariant in these directions, or $u$ is independent of $t$. 
		\end{lem}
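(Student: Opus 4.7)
The argument uses two facts from what precedes. By Lemma A.5, $N(\cdot; x_0, t_0) \equiv \gamma/2$ at every zero $(x_0, t_0)$, and then the rigidity statement in Lemma A.3 (the equality case of the Almgren monotonicity formula) implies that $u$ is backward self-similar of degree $\gamma$ about every point of $\{u = 0\}$:
\[
u(x_0 + x, t_0 + t) = \lambda^{-\gamma} u(x_0 + \lambda x,\, t_0 + \lambda^2 t), \qquad t \leq 0,\ \lambda > 0.
\]
Under the standing normalization $(0,0) \in \{u = 0\}$, the plan is to dichotomize on whether $\{u=0\}$ has any point with $t < 0$.

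\emph{Case A: $\{u = 0\} \subset \R^n \times \{0\}$.} Let $Z := \{x \in \R^n : u(x,0) = 0\}$. For any $x_1 \in Z \setminus \{0\}$, I write $u(x_1 + \lambda x, \lambda^2 t)$ two ways, using the $(0,0)$-self-similarity at the argument $\lambda^{-1} x_1 + x$ and the $(x_1,0)$-self-similarity at $x$:
\[
\lambda^\gamma u(\lambda^{-1} x_1 + x, t) \;=\; u(x_1 + \lambda x, \lambda^2 t) \;=\; \lambda^\gamma u(x_1 + x, t).
\]
Cancelling $\lambda^\gamma$ and sending $\lambda \to \infty$, continuity gives $u(x,t) = u(x_1 + x, t)$, i.e.\ translation invariance in the direction $x_1$. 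Running this for every $x_1 \in Z \setminus \{0\}$ and using that translations commute, $u$ becomes invariant under translations in $V := \mathrm{span}(Z)$, and then $Z = V$ follows since $u(v, 0) = u(0, 0) = 0$ for every $v \in V$. Thus $\{u=0\} = V \times \{0\}$ is a linear subspace along which $u$ is translation invariant.

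\emph{Case B: there exists $(x_1, t_1) \in \{u=0\}$ with $t_1 < 0$.} By $(0,0)$-self-similarity, $(\lambda x_1, \lambda^2 t_1) \in \{u=0\}$ for every $\lambda > 0$, so a self-similarity is available at each of these points. Applying both the $(\lambda x_1, \lambda^2 t_1)$-self-similarity and the $(0,0)$-self-similarity to the same quantity $u(\lambda x_1 + y, \lambda^2 t_1 + s)$ (valid for $s \leq 0$, since then $\lambda^2 t_1 + s \leq 0$ as well) and cancelling the common factor $\rho^{-\gamma}$ yields
\[
u(\lambda x_1 + Y,\, \lambda^2 t_1 + S) \;=\; u(\rho \lambda x_1 + Y,\, \rho^2 \lambda^2 t_1 + S),
\]
where $Y := \rho y \in \R^n$ and $S := \rho^2 s \leq 0$ are free. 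Writing $\alpha := \lambda$, $\beta := \rho\lambda$ and letting $\beta \to 0^+$ (by continuity of $u$) gives the ``parabolic shift'' identity $u(\alpha x_1 + Y, \alpha^2 t_1 + S) = u(Y, S)$ for every $\alpha \geq 0$. Applying this identity twice, with parameters $\beta$ and then $\alpha$, and comparing with the single application at $\alpha + \beta$ produces, via $(\alpha+\beta)^2 - (\alpha^2 + \beta^2) = 2\alpha\beta$, the pure time-translation identity $u(Z, S') = u(Z, S' + 2\alpha\beta\, t_1)$. Since $2\alpha\beta\, t_1$ sweeps all of $(-\infty, 0]$ as $\alpha, \beta \geq 0$ vary (with $(\alpha^2+\beta^2) t_1$ also made sufficiently negative to absorb the constraint $S \leq 0$), $u$ is independent of $t$ on $\R^n \times (-\infty, 0]$.

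\emph{Main obstacle.} The delicate part is Case B: from a single zero off the slice $\{t=0\}$ one must manufacture a genuine time-translation symmetry of $u$. The mechanism is first to use $(0,0)$-self-similarity to spread $(x_1, t_1)$ into a one-parameter family of zeros, then to extract a two-parameter family of self-similarities, and finally to exploit the elementary identity $(\alpha+\beta)^2 - (\alpha^2+\beta^2) = 2\alpha\beta$ to decouple space and time in the resulting parabolic shift. Care is needed at the last step to verify that the admissible parameters cover every pair of times in $(-\infty, 0]$. Case A, by contrast, is a straightforward matching of two self-similarities followed by a limit.
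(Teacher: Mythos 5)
Your proposal follows essentially the route the paper intends: the paper's own proof of Lemma \ref{A.6} is a pointer to Proposition \ref{prop 6.7} and to \cite[Theorem 8.3]{White1997}, and the mechanism there is exactly yours --- Lemma \ref{lem A4} together with the rigidity case of Lemma \ref{lem Almgren monotonicity} gives backward self-similarity of degree $\gamma$ about every zero, from which one extracts either spatial translation invariance (your Case A, the same computation as in Proposition \ref{prop max density points}(3)) or time independence (your Case B). Case A is correct; the only thing left implicit is that invariance vectors a priori only form a group, so to pass from invariance under each $x_1\in Z$ to invariance under all of $\mathrm{span}(Z)$ you should note that $Z$ is a cone (immediate from the $(0,0)$-self-similarity, as you do note in Case B).

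The one step that does not hold as written is the very end of Case B. The identity $u(Z,S''+2\alpha\beta t_1)=u(Z,S'')$ is available only for $S''\le(\alpha^2+\beta^2)t_1$ (this is the constraint $S\le 0$), and since $\alpha^2+\beta^2\ge 2\alpha\beta$ the admissible time shift at a fixed time $S''<0$ is bounded by $|S''|$; in particular your parenthetical points the wrong way --- making $(\alpha^2+\beta^2)t_1$ ``sufficiently negative'' makes the constraint $S\le0$ harder, not easier, to satisfy, and a single application can only compare $u(Z,\tau_1)$ with $u(Z,\tau_2)$ when $2\tau_2\le\tau_1\le\tau_2\le 0$. The conclusion is still correct, but it needs a short chaining argument: taking $\alpha=\beta$ gives $u(Z,\tau-h)=u(Z,\tau)$ for all $0< h\le|\tau|$, so $t\mapsto u(Z,t)$ is constant on $[2\tau,\tau]$ for every $\tau<0$; these intervals overlap dyadically, so the function is constant on $(-\infty,\tau]$ for every $\tau<0$, hence on $(-\infty,0)$, and by continuity on $(-\infty,0]$. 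With that small repair, Case B, and hence the lemma, is proved.
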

		\begin{proof}
			This is similar to Proposition \ref{prop 6.7}, see \cite[Theorem 8.3]{White1997} for the detailed proof.
		\end{proof}
		
		\begin{proof}[Completion of the proof of Theorem \ref{thm Liouville}]
			If $u$ is independent of $t$, then it is a stationary two-valued harmonic function. By the Liouville theorem in \cite[Appendix A]{Davila-Wang-Wei}, $u$ must be a constant.
			
			
			
			If $\{u=0\}\subset\R^n\times\{0\}$, then
			\[
			\left\{
			\begin{array}{lll}
				\partial_tu-\Delta u=0~\text{in}~\R^n\times (-\infty,0),\\
				u>0~\text{in}~\R^n\times(-\infty,0).
			\end{array}
			\right.
			\]
		Because $u(0,0)=0$,	by strong maximum principle, we obtain $u\equiv0$.
		\end{proof}

		\section{An half space problem}
		\setcounter{equation}{0}
		
		In this appendix we study an half space problem for   stationary two-valued caloric functions.
  \begin{thm}\label{thm half space}
			Suppose $\gamma\in(0,1)$,  $u$ is a bounded, globally $\gamma$-H\"{o}lder continuous, stationary two-valued caloric function in $\R^n_+\times(-\infty,0)$, where $\R^n_+:=\{x_n
			>0\}$. If $u=1$ on $\partial\R^n_+\times(-\infty,0)$, then $u\equiv 1$.
		\end{thm}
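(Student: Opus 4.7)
First I would show $u\le 1$ on $\R^n_+\times(-\infty,0)$. Because $u$ is bounded and subcaloric with $u\equiv 1$ on the lateral boundary $\partial\R^n_+\times(-\infty,0)$, I compare $u$ with the constant function $1$ on growing parabolic cylinders $Q_R:=(B_R\cap\R^n_+)\times(-R^2,0)$. The parabolic maximum principle yields $u\le\Phi_R$, where $\Phi_R$ is the heat-equation solution on $Q_R$ with boundary data matching $u$. On the piece of the parabolic boundary where $\Phi_R-1\not\equiv 0$ (the ``far'' part, at parabolic distance of order $R$ from any fixed $X$), the data is controlled by $\sup u<\infty$, and Gaussian tails of the heat kernel give $\Phi_R(X)\to 1$ as $R\to\infty$. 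Hence $u\le 1$.

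Next I would show $\{u=0\}=\emptyset$ by contradiction. Assume $X_0=(x_0,t_0)\in\{u=0\}$. The global $\gamma$-H\"older continuity together with $u\equiv 1$ on $\partial\R^n_+$ forces $x_n^0\ge c>0$, so $X_0$ sits at positive parabolic distance from the lateral boundary. Consider the parabolic blow-up $u_\lambda(X):=\lambda^{-\gamma}u(X_0+\mathcal{D}_\lambda(X))$ as $\lambda\to 0^+$. Uniform H\"older control and the expansion $\{X:X_0+\mathcal{D}_\lambda(X)\in\R^n_+\times(-\infty,0)\}\nearrow\R^n\times\R$ let me extract a locally uniform limit $u_0$ on $\R^n\times\R$, with $u_0(0,0)=0$ and the same H\"older bound. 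The distributional identities defining a stationary two-valued caloric function pass to the limit by the convergence techniques of Lemma~\ref{lem 4.01} (weak $L^2$ convergence of $\partial_t u_\lambda$, strong $L^2$ convergence of $\nabla u_\lambda$; here the zeroth-order contribution is absent, simplifying the analysis). Hence $u_0$ is a globally $\gamma$-H\"older continuous stationary two-valued caloric function on $\R^n\times\R$, and Theorem~\ref{thm Liouville} forces $u_0\equiv 0$. To close the contradiction, I would use the Almgren machinery of Appendix~\ref{appendix Liouville}, applied locally at $X_0$ for $s\ll (x_n^0)^2$ (so that the heat kernel is effectively supported inside $\R^n_+$): the upper bound $H(s;X_0)\le Cs^\gamma$ from H\"older continuity and Step~2 of Lemma~\ref{lem A4} give $N(s;X_0)\ge\gamma/2$ for small $s$, and combined with the unique continuation remark after Lemma~\ref{lem Almgren monotonicity} this yields a quantitative nondegeneracy $\liminf_{s\to 0^+}s^{-\gamma}H(s;X_0)>0$. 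Translating back through the blow-up, $u_\lambda(0,-1)^2\gtrsim c>0$ in the limit, contradicting $u_0\equiv 0$.

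Once $u>0$ everywhere, $u$ is smooth and caloric in $\R^n_+\times(-\infty,0)$, bounded with $u\equiv 1$ on $\partial\R^n_+$. Extending $u-1$ by odd reflection across $\{x_n=0\}$ produces a bounded caloric function on $\R^n\times(-\infty,0)$. The classical Liouville theorem for bounded ancient caloric functions on $\R^n$ (which follows from the parabolic Harnack inequality applied on growing backward cylinders) then forces this extension to be constant, and the odd symmetry across $\{x_n=0\}$ forces the constant to vanish; hence $u\equiv 1$.

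The main obstacle is the nondegeneracy step of Paragraph~2: the monotonicity analysis in Appendix~\ref{appendix Liouville} is designed for global ancient solutions, and its adaptation to the half-space setting requires that the frequency machinery work on the restricted range $s\ll (x_n^0)^2$, using only the lower bound $N(s;X_0)\ge\gamma/2$ (the upper bound $N(s;X_0)\le\gamma/2$ of Appendix~\ref{appendix Liouville} was derived by sending $s\to+\infty$, which is unavailable here). Correspondingly the contradiction with $u_0\equiv 0$ must be extracted from the nondegeneracy direction of the dichotomy alone.
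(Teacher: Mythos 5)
Your endgame (odd reflection plus the Liouville theorem for bounded ancient caloric functions) is fine \emph{once} you know $u>0$ everywhere, but the step that is supposed to deliver $\{u=0\}=\emptyset$ contains a genuine error, and it is exactly the heart of the matter. From $N(s;X_0)\ge\gamma/2$ for small $s$ you get
\[
\frac{d}{ds}\log\bigl(s^{-\gamma}H(s;X_0)\bigr)=\frac{2N(s;X_0)-\gamma}{s}\ \ge\ 0,
\]
so $s^{-\gamma}H(s;X_0)$ is \emph{nondecreasing} in $s$; this only gives the upper bound $H(s)\le H(s_0)(s/s_0)^{\gamma}$ and is perfectly consistent with $\liminf_{s\to0^+}s^{-\gamma}H(s;X_0)=0$. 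The nondegeneracy you want would require the opposite inequality $N\le\gamma/2$ near $s=0$, which, as you yourself note, is exactly the half of Lemma \ref{lem A4} that is unavailable in the half-space setting (it was obtained by sending $s\to+\infty$). Worse, the claimed local nondegeneracy is simply false at general zero points: at a regular point of the zero set (Proposition \ref{prop regularity for two valued function}(iv)) one has $u=|\Phi|$ with $\nabla\Phi\neq0$, so $u$ vanishes linearly, $H(s)\simeq s$, and $s^{-\gamma}H(s)\to0$ because $\gamma<1$; correspondingly the blow-up $\lambda^{-\gamma}u(X_0+\mathcal{D}_\lambda(\cdot))$ really does converge to $0$ there, so the conclusion $u_0\equiv0$ from Theorem \ref{thm Liouville} produces no contradiction. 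Thus the strategy ``first prove the zero set is empty, then reflect'' collapses; proving $\{u=0\}=\emptyset$ directly is essentially as hard as the theorem itself. (A secondary, fixable point: your $u\le1$ comparison needs the shielding effect of the lateral boundary, not Gaussian tails, to kill the contribution of the bottom $t=-R^2$; and passing the stationarity and energy conditions to blow-up limits requires a compactness statement for two-valued caloric functions, not literally Lemma \ref{lem 4.01}.)

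For contrast, the paper never tries to empty the zero set by a frequency argument. It uses the regularity theory of Proposition \ref{prop regularity for two valued function} to show that $|\partial_{\xi}u|$ is continuous and subcaloric (Lemma \ref{lem B3}), then a sliding/strong maximum principle argument (with a translated limit as the maximizing times go to $-\infty$) forces $\partial_{x_1}u=\dots=\partial_{x_{n-1}}u\equiv0$, reducing to $n=1$; in one dimension it exploits the gradient bound $|\partial_xu|\le Cx^{-1}$, the global energy inequality, and a $t\to-\infty$ limit to show the Dirichlet energy $D(t)$ tends to $0$, whence $D\equiv0$ by monotonicity and $u\equiv1$. If you want to salvage a frequency-based approach you would have to work with the natural vanishing order (normalizing by $\sqrt{H}$ rather than by $\lambda^{\gamma}$) and classify the resulting homogeneous blow-ups, which is a substantially different and longer route.
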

		
		First we need a regularity theory for stationary two-valued caloric functions, see \cite{TBD}.
		\begin{prop}\label{prop regularity for two valued function}
			Suppose $u$ is a H\"{o}lder continuous, stationary two-valued caloric function in $Q_1^-$. Then  for any $t\in[-1,0]$, either $u(t)\equiv 0$ in $B_1$ or $u(t)$ cannot vanish in any open subset of $B_1$.
   
Moreover, if it is the later case, then for any $r<1$, in $Q_r^-$, we have
			\begin{description}
				\item [(i) Lipschitz regularity] $u$ is Lipschitz continuous, with the estimate
				\[\sup_{Q_r^-}|\nabla u|\leq \frac{C}{1-r}\sup_{Q_1^-}u;\]
				\item [(ii) Hausdorff dimension]  $\text{dim}_{\mathcal{P}}(\{u=0\})\leq n+1$;
				\item [(iii) Decomposition] $\{u=0\}=\mathcal{R}\cup \mathcal{S}$, where $\mathcal{R}$ is relatively open and $\mathcal{S}$ is closed;
				\item [(iv) Regular set]  $X\in\mathcal{R}$ if and only if there exists an $r>0$ and a caloric function $\Phi$ in $Q_r^-(X)$, with $\Phi(X)=0$ and $\nabla\Phi(X)\neq 0$, such that
				$\{u=0\}\cap Q_r^-(X)=\{\Phi=0\}$, which is a smooth hypersurface, and $u=|\Phi|$ in $Q_r^-(X)$;
				\item [(iv) Singular set]  $X\in\mathcal{S}$ if and only if
				\[\limsup_{r\to 0} \sup_{Q_r^-(X)\cap\{u>0\}}|\nabla u|=0;\]
				\item [(v) Hausdorff dimension for singular set] $\text{dim}_{\mathcal{P}}(\mathcal{S})\leq n$.
				\item[(vi) One dimensional case] If $n=1$, then $\mathcal{S}$ is a discrete set, and $\mathcal{R}$ consists of regular curves with (possible) end points in $\mathcal{S}$.
			\end{description}
		\end{prop}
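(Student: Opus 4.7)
The plan is to emulate the regularity theory for stationary two-valued harmonic functions (as in the elliptic companion work of Davila--Wang--Wei), adapted to the parabolic setting, taking the Almgren frequency $N(s;X_0)$ from Appendix~A as the main tool together with Federer's dimension reduction from Section~\ref{section stratification}.

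\emph{Step 1 (Dichotomy on time slices).} Fix $t_0\in[-1,0]$ and suppose $u(\cdot,t_0)\not\equiv 0$ in $B_1$ but vanishes on a nonempty open set $U\subset B_1$. For $X_0=(x_0,t_0)$ with $x_0\in U$ and any $s>0$ small, $H(s;X_0)$ is concentrated on a parabolic neighborhood in which $u(\cdot,t_0)$ vanishes identically in space, so by the global H\"older bound, $H(s;X_0)=o(s^k)$ for every $k$, forcing $N(0^+;X_0)=+\infty$; but $u$ is globally H\"older continuous and (ii) of Definition~\ref{stationary two-valued caloric fct} combined with (iv) gives a polynomial upper bound $H(s;X_0)\leq Cs^\gamma$ for large $s$. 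By Almgren monotonicity (Lemma~\ref{lem Almgren monotonicity}) this bounds $N$ for all $s$, contradicting $N(0^+;X_0)=+\infty$. Hence no open spatial zero set can coexist with a nonzero slice.

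\emph{Step 2 (Lipschitz regularity).} At each $X_0\in\{u=0\}$, argue that $N(0^+;X_0)\geq 1$ by classifying backward self-similar two-valued caloric blow-ups of frequency $<1$: such a profile would be a globally H\"older two-valued caloric function on $\R^n\times(-\infty,0]$ of degree $<1$; showing the only such profiles are constants (excluded by $u_0(0)=0$) is done via a further frequency / elliptic reduction at spatial infinity. Once $N(0^+)\geq 1$, the growth formula $H(s;X_0)\leq C s^{2}$ (from $(\log H)'=2N/s$) gives a linear decay $\sup_{Q_r^-(X_0)}u\leq Cr$. Combining this linear decay at the free boundary with interior gradient bounds for caloric functions inside $\{u>0\}$ (Lemma~\ref{lemma 4.1}-type argument) and a standard covering yields $\sup_{Q_r^-}|\nabla u|\leq C(1-r)^{-1}\sup_{Q_1^-}u$.

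\emph{Step 3 (Regular/singular decomposition).} Define $\mathcal{R}:=\{X_0\in\{u=0\}:N(0^+;X_0)=1\}$ and $\mathcal{S}:=\{u=0\}\setminus\mathcal{R}$. Upper semi-continuity of $X\mapsto N(0^+;X)$ (obtained as in Proposition~\ref{pro 5.2} for the frequency in place of $\Theta$) together with a gap theorem asserting that frequencies take values in $\{1\}\cup[1+c_0,\infty)$ for some $c_0>0$ (again by classification of backward self-similar two-valued caloric functions) implies $\mathcal{R}$ is relatively open. At $X_0\in\mathcal{R}$, every tangent function is a degree-$1$, backward self-similar, globally Lipschitz two-valued caloric function; classification shows it must be $|\ell\cdot(x-x_0)|$ for some unit $\ell\in\R^n$ (no genuine time dependence is possible for a degree-$1$ Lipschitz self-similar profile that vanishes at the origin). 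Uniqueness of this tangent via an Almgren monotonicity-epiperimeter argument then yields a caloric $\Phi$ with $\{u=0\}=\{\Phi=0\}$, $\nabla\Phi\neq 0$, and $u=|\Phi|$ in a neighborhood, and smoothness of the hypersurface $\{\Phi=0\}$ is by the parabolic implicit function theorem.

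\emph{Step 4 (Dimensions and one-dimensional case).} Federer dimension reduction on the stratification of $\mathcal{S}$ by the dimension of the spine of any tangent function, exactly as in the proof of Theorem~\ref{thm stratification}, gives $\dim_\mathcal{P}(\mathcal{S})\leq n$; since $\mathcal{R}$ is locally a smooth hypersurface of parabolic dimension $n+1$, (ii) follows. The singular characterization in (iv) (i.e.\ $X\in\mathcal{S}$ iff $\limsup_{r\to 0}\sup_{Q_r^-(X)\cap\{u>0\}}|\nabla u|=0$) follows because at a regular point the blow-up $|\ell\cdot x|$ has $|\nabla u_0|=1$ on $\{u_0>0\}$, while at a singular point every blow-up is of degree $>1$ and hence has $\nabla u_0(0)=0$, which by scaling forces the vanishing of the gradient limsup. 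For $n=1$, the spine of a tangent at a singular point would have to be a subspace of $\R$ of dimension $0$, so by the upper semi-continuity of $N(0^+;\cdot)$ and the gap $N\geq 1+c_0$ on $\mathcal{S}$, singular points cannot accumulate; $\mathcal{R}$ then consists of maximal integral curves of the smooth hypersurface structure in the $(x,t)$-plane with endpoints possibly in $\mathcal{S}$.

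\emph{Main obstacle.} The classification of backward self-similar, globally H\"older two-valued caloric functions of low frequency (the frequency-gap at $1$, and the rigidity at frequency exactly $1$) is the crux: it underlies both the Lipschitz estimate in Step~2 and the regular-point analysis in Step~3. Unlike the elliptic case, where degree-$1$ homogeneous two-valued harmonic functions reduce immediately to absolute values of linear forms, the parabolic self-similar profiles a priori involve a time-dependent shape function $w(x/\sqrt{-t})$ satisfying an Ornstein--Uhlenbeck-type equation with a two-valued Neumann condition on $\{w=0\}$; showing this forces $w$ to be time independent and linear requires a dedicated spectral/rigidity argument.
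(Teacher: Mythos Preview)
The paper does not prove this proposition at all: immediately before the statement the authors write ``First we need a regularity theory for stationary two-valued caloric functions, see \cite{TBD}'', and the reference is to a separate preprint (Lu-Bo Wang, 2024). So there is no ``paper's own proof'' to compare against; the proposition is quoted as a black box and used in Appendix~B.

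That said, your plan has at least one genuine gap worth flagging. In Step~1 you try to derive the spatial unique continuation on a fixed slice $\{t=t_0\}$ from the Almgren quantity $H(s;X_0)$. But $H(s;X_0)$ is an integral of $u(\cdot,t_0-s)^2$, i.e.\ at strictly earlier times; knowing that $u(\cdot,t_0)$ vanishes on an open set $U$ gives you no direct control on $H(s;X_0)$ for $s>0$, so the claim ``$H(s;X_0)=o(s^k)$ for every $k$'' is unsupported. The dichotomy in the proposition is a genuinely spatial unique continuation statement at a fixed time, and typically requires either an elliptic-type argument on the slice (using that $u(\cdot,t_0)$ inherits a stationary two-valued harmonic structure, or a doubling/Carleman estimate in $x$ alone), or a forward-in-time propagation argument combined with backward uniqueness. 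Your backward frequency argument does not see the right variable.

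The remaining steps (frequency lower bound $N(0^+)\geq 1$ for Lipschitz decay, frequency gap for the regular/singular decomposition, Federer reduction for the dimension bounds) are the natural parabolic analogues of the elliptic theory and are plausible in outline. You correctly identify the main obstacle: the classification of backward self-similar two-valued caloric profiles at frequency $1$ and the frequency gap above $1$. Note, however, that your rigidity claim in Step~3 (``no genuine time dependence is possible for a degree-$1$ Lipschitz self-similar profile'') needs justification: a priori a degree-$1$ self-similar two-valued caloric function could be $|a\cdot x + b\sqrt{-t}\,|$-type, and ruling out the $\sqrt{-t}$ piece requires an argument (e.g.\ Lipschitz continuity across $t=0$, or the stationary condition in $t$).
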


      The first claim in this proposition is a unique continuation (in spatial directions) property. Applying this result in our setting, because 
     $u$ is continuous and $u=1$ on $\partial\R^n_+\times(-\infty,0)$, we see for any $t\leq 0$, $u(t)$ cannot be identically zero. Thus the second alternative holds for all of $u(t)$. Consequently, the above properties (i)-(vi) hold for $u$.

		\begin{lem}\label{lem B3}
			For any constant vector $\xi\in\R^n$, $|\partial_\xi u|$ is continuous and sub-caloric.
		\end{lem}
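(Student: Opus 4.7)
The plan is to combine Proposition \ref{prop regularity for two valued function} with Kato's inequality, handling three regions separately for $V:=|\partial_\xi u|$: the positivity set $\{u>0\}$, a parabolic neighborhood of each regular zero point $X\in\mathcal R$, and the singular set $\mathcal S$. The first two will be treated directly via the local structure supplied by Proposition \ref{prop regularity for two valued function}; the last requires a removability argument exploiting the parabolic codimension bound on $\mathcal S$.

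For continuity, I would first note that on $\{u>0\}$ condition (iii) of Definition \ref{stationary two-valued caloric fct} makes $u$ classically caloric, hence smooth, so $V$ is smooth there. Near a regular point $X\in\mathcal R$, Proposition \ref{prop regularity for two valued function} (iv) supplies a caloric $\Phi$ with $\nabla\Phi(X)\ne 0$ and $u=|\Phi|$ in a parabolic neighborhood, from which $V=|\partial_\xi\Phi|$ and hence smoothness of $V$ in that neighborhood. At a singular point $X\in\mathcal S$, the characterization $\limsup_{r\to 0}\sup_{Q_r^-(X)\cap\{u>0\}}|\nabla u|=0$ forces $V\to 0$ from the positivity side; on the regular stratum near $X$, the identification $|\nabla u|=|\nabla\Phi|$ on $\{u>0\}$ forces the relevant $|\nabla\Phi|$, and hence $V$, to vanish in the limit as well. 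Therefore $V$ is continuous globally and $V\equiv 0$ on $\mathcal S$.

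For sub-caloricity, the plan is first to establish it on $U:=(\mathbb{R}_+^n\times(-\infty,0))\setminus\mathcal S$ and then extend across $\mathcal S$. On $\{u>0\}$, differentiating the heat equation in the constant direction $\xi$ shows $\partial_\xi u$ is caloric, so Kato's inequality gives $(\partial_t-\Delta)V\le 0$ in the sense of distributions. In a neighborhood of any $X\in\mathcal R$, $V=|\partial_\xi\Phi|$ with $\partial_\xi\Phi$ caloric, and Kato's inequality again yields the same inequality. Patching these local statements produces $(\partial_t-\Delta)V\le 0$ as a distribution on $U$.

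The hard part will be extending this distributional inequality across $\mathcal S$. By Proposition \ref{prop regularity for two valued function} (i), $V$ is locally bounded, and by (v), $\dim_{\mathcal P}(\mathcal S)\le n$, which is parabolic codimension at least two. For a nonnegative $\eta\in C_0^\infty(\mathbb{R}_+^n\times(-\infty,0))$, I would construct, from a Vitali covering of $\mathcal S\cap\mathrm{supp}\,\eta$, smooth cutoffs $\chi_\varepsilon$ equal to $1$ on an $\varepsilon$-neighborhood of $\mathcal S$ and supported in a slightly larger set, and test the sub-caloricity on $U$ against $\eta(1-\chi_\varepsilon)$. Expanding the product derivative leaves error terms $\int V\eta(\partial_t+\Delta)\chi_\varepsilon$ and $\int V\nabla\eta\cdot\nabla\chi_\varepsilon$; since $V$ vanishes on $\mathcal S$ by the continuity step, $V\le\omega(\varepsilon)$ on $\mathrm{supp}\,\chi_\varepsilon$ with $\omega(\varepsilon)\to 0$, and the parabolic codimension bound controls the cutoff norms, so both error terms will go to zero in the limit $\varepsilon\to 0$; passing to the limit yields $\int V(\partial_t\eta+\Delta\eta)\ge 0$, which is the desired global sub-caloricity. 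The main technical obstacle is precisely this balance between the vanishing rate of $V$ near $\mathcal S$ and the blowup of the cutoff norms, and this is where the full strength of the Hausdorff bound in Proposition \ref{prop regularity for two valued function} (v) is used.
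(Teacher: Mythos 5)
Your treatment of continuity and of sub-caloricity away from $\mathcal S$ is exactly the paper's: smoothness and Kato on $\{u>0\}$, the identity $|\partial_\xi u|=|\partial_\xi\Phi|$ on a backward cylinder around each point of $\mathcal R$, and the vanishing-gradient characterization of $\mathcal S$ to get continuity with $|\partial_\xi u|=0$ on $\mathcal S$. The divergence, and the problem, is in your final step extending the distributional inequality across $\mathcal S$. Your cutoff argument does not close as stated: building $\chi_\varepsilon$ from a covering of $\mathcal S\cap\mathrm{supp}\,\eta$ by cylinders $Q_{r_i}(X_i)$, $r_i\le\varepsilon$, the dominant error is bounded by $C\,\omega(C\varepsilon)\sum_i r_i^{-2}\,|Q_{r_i}|\sim\omega(C\varepsilon)\sum_i r_i^{\,n}$. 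But $\dim_{\mathcal P}(\mathcal S)\le n$ only gives $\mathcal P^{\,n+\delta}(\mathcal S)=0$ for every $\delta>0$, i.e.\ smallness of $\sum_i r_i^{\,n+\delta}$; it gives no control whatsoever on $\sum_i r_i^{\,n}$ (Hausdorff dimension does not bound Minkowski/box content, and $\mathcal P^{\,n}(\mathcal S)$ may be infinite), and the modulus $\omega$ coming from mere continuity of $V:=|\partial_\xi u|$ has no quantitative decay to compensate. So the ``balance'' you flag as the main technical obstacle is a genuine gap, not a routine verification: the cited ingredients do not suffice.

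The missing idea — and what the paper means by ``follows by an approximation'' — is that no capacity or dimension input is needed at all, because $\mathcal S\subset\{V=0\}$ and $V\ge 0$. For $\epsilon>0$ consider $V_\epsilon:=\max(V-\epsilon,0)$. The open set $\{V<\epsilon\}$ contains $\mathcal S$ and $V_\epsilon\equiv 0$ there, hence $V_\epsilon$ is caloric on it; on the open complement of $\mathcal S$, $V-\epsilon$ is sub-caloric by your local analysis, hence so is its maximum with $0$. Since sub-caloricity is a local property and these two open sets cover $\R^n_+\times(-\infty,0)$, $V_\epsilon$ is sub-caloric everywhere; letting $\epsilon\to 0$, $V_\epsilon\nearrow V$ locally uniformly, so $V$ is sub-caloric. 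Replacing your cutoff scheme by this observation (which uses only what you have already established: $V\ge0$ continuous, $V=0$ on $\mathcal S$, sub-caloric off $\mathcal S$) repairs the proof and reproduces the paper's argument.
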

		\begin{proof}
			Since $u$ is smooth in $\{u>0\}$, $|\partial_\xi u|$ is continuous here.  Note also $\partial_\xi u$ satisfies the heat equation in this open set, so $|\partial_\xi u|$ is sub-caloric in $\{u>0\}$.
			
			Next we show that $|\partial_\xi u|$ is continuous across $\mathcal{R}$. In fact, for any $X\in\mathcal{R}$, by taking the cylinder $Q_r^-(X)$ and the caloric function $\Phi$ from (iv) in the previous proposition, we see
			\[|\partial_\xi u|\equiv |\partial_\xi \Phi| \quad \text{in} ~~ Q_r^-(X).\]
			Hence $|\partial_\xi u|$ is continuous and sub-caloric in $Q_r^-(X)$.
			
			Finally, by (iv) of the previous proposition, $|\partial_\xi u|$ converges uniformly to $0$ when approaching $\mathcal{S}$. Therefore $|\partial_\xi u|$ is continuous near $\mathcal{S}$.  Now $\mathcal{S}\subset \{|\partial_\xi u|=0\}$, and we have shown that $|\partial_\xi u|$ is sub-caloric outside $\mathcal{S}$, the corollary then follows by an approximation.
		\end{proof}

		With these tools in hand, we first reduce the proof of Theorem \ref{thm half space} to a simpler setting.
		\begin{lem}
			Under the assumptions of Theorem \ref{thm half space}, $u$ must be independent of $x_1$, $\dots$, $x_{n-1}$.
		\end{lem}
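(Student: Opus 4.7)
My plan is to fix $j\in\{1,\dots,n-1\}$ and prove that the tangential derivative $\partial_j u$ vanishes identically; iterating over $j$ then gives that $u$ is independent of $x_1,\dots,x_{n-1}$. Set $v:=|\partial_j u|$. By Lemma~B.3, $v$ is continuous, nonnegative, and sub-caloric on $\R^n_+\times(-\infty,0)$, so it suffices to show that the only such $v$ with the boundary and decay properties we are about to establish is $v\equiv 0$.

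First I would check that $v$ vanishes on the flat lateral boundary. Since $u\equiv 1$ on $\partial\R^n_+\times(-\infty,0)$ and $u$ is continuous, there is some $\delta>0$ with $u\ge 1/2$ throughout the strip $\{0\le x_n\le \delta\}\times(-\infty,0)$. On this strip $u$ is a bounded classical solution of the linear heat equation (the singular term $u^{-p}$ is inert in the definition of two-valued caloric here since $u>0$), so standard Schauder boundary estimates up to the flat portion of the boundary give $u\in C^{1,\alpha/2}$ there, and tangential differentiation of $u\equiv 1$ yields $\partial_j u\equiv 0$ on $\partial\R^n_+\times(-\infty,0)$. Hence $v\equiv 0$ on $\partial\R^n_+\times(-\infty,0)$.

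Next I would establish a global bound on $v$ together with uniform decay as $x_n\to\infty$. For any interior point $X=(x,t)$ the parabolic cylinder $Q^-_{x_n/2}(X)$ lies inside $\R^n_+\times(-\infty,0)$, so rescaling the interior Lipschitz estimate of Proposition~B.2(i) to this cylinder gives
\[
v(X)\le |\nabla u(X)|\le \frac{C\sup_{Q^-_{x_n/2}(X)} u}{x_n/2}\le \frac{CM}{x_n},
\]
where $M:=\|u\|_\infty$. Combined with the Schauder bound near $\partial\R^n_+$ from the previous paragraph, $v$ is globally bounded on $\R^n_+\times(-\infty,0)$, and $\sup_{x_n\ge R}v\to 0$ as $R\to\infty$.

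Finally I would conclude by a parabolic Phragm\'en--Lindel\"of argument. Given $\varepsilon>0$, choose $R=R_\varepsilon$ with $v\le\varepsilon$ on $\{x_n\ge R\}\times(-\infty,0)$. On the strip $\{0<x_n<R\}\times(-\infty,0)$, $v$ is bounded and sub-caloric, with $v\equiv 0$ on $\{x_n=0\}$ and $v\le\varepsilon$ on $\{x_n=R\}$. Comparing $v$ with the stationary caloric barrier $\varphi(x_n):=\varepsilon(1+x_n/R)$ on the finite-box exhaustion $\{0<x_n<R,\,|x'|\le K,\,T\le t\le 0\}$ and letting $K\to\infty$ then $T\to-\infty$ (the global bound on $v$ allows one to control the lateral and initial contributions via Gaussian-type cutoffs, exactly as in the classical Phragm\'en--Lindel\"of principle for bounded sub-caloric functions), yields $v\le 2\varepsilon$ on the strip. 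Since $\varepsilon$ was arbitrary, $v\equiv 0$.

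The main obstacle is the last step: the domain $\R^n_+\times(-\infty,0)$ has no initial time (the past extends to $-\infty$) and is unbounded in the tangential directions $x'\in\R^{n-1}$, so the parabolic maximum principle is not directly available. The essential ingredient that makes the Phragm\'en--Lindel\"of comparison go through is the \emph{global} $L^\infty$ bound on $v$ coming from the rescaled Lipschitz estimate plus boundary Schauder, which supplies the mild growth condition needed to absorb the lateral and backward-time contributions when testing against the barrier $\varphi$ on an exhausting sequence of space-time boxes.
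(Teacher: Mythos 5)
Your proposal is correct in substance but closes the argument by a genuinely different route than the paper. Both proofs share the first two steps: $\partial_j u\equiv 0$ on $\partial\R^n_+\times(-\infty,0)$ because $u$ is caloric and smooth in a strip $\{0<x_n<\delta\}$ where it stays positive, and $|\nabla u|\le C/x_n$ from the rescaled Lipschitz estimate of Proposition \ref{prop regularity for two valued function}(i); both then work with the continuous sub-caloric function $v=|\partial_j u|$ from Lemma \ref{lem B3}. The paper finishes by a sliding/compactness argument: if $\sup v>0$, translate along a maximizing sequence (necessarily $t_k\to-\infty$), extract a locally uniform limit using the global H\"older bound and the regularity theory, and apply the strong maximum principle at the interior maximum of the limit to contradict the zero boundary data. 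You instead run a Phragm\'en--Lindel\"of comparison with the explicit barrier $\varepsilon(1+x_n/R)$ on slabs; this buys you independence from the compactness machinery (no need for convergence of the translates $u_k$ and of $|\partial_j u_k|$), at the price of justifying a comparison principle on a domain that is tangentially unbounded and ancient in time. One point in your sketch deserves sharpening: the ``Gaussian-type cutoffs'' of the classical Phragm\'en--Lindel\"of principle only absorb the lateral (tangential) contributions over a finite time window $[T,0]$; they do not by themselves absorb the initial slice at $t=T$, where $v-\varepsilon(1+x_n/R)$ is merely bounded, not $\le 0$. The correct mechanism is the finite width of the slab in $x_n$: add to your barrier a decaying caloric term such as $\sqrt{2}\,\|v\|_{L^\infty}\,e^{-a^2(t-T)}\cos\left(a\left(x_n-\tfrac{R}{2}\right)\right)$ with $a=\pi/(2R)$, which dominates $v$ at $t=T$, is nonnegative on both faces, and tends to $0$ for fixed $t$ as $T\to-\infty$; letting $T\to-\infty$ then yields $v\le\varepsilon(1+x_n/R)\le 2\varepsilon$ and hence $v\equiv 0$. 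With that refinement your argument is complete and correct.
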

		\begin{proof}
			For any $(x,t)\in \R^n_+\times(-\infty,0)$, by the boundedness of $u$, applying (i) of Proposition \ref{prop regularity for two valued function} to $u$ in $Q_{x_n/2}^-(x,t)$, we see there exists a constant $C$ independent of $(x,t)$ such that
			\begin{equation}\label{A2.1}
				|\nabla u(x,t)|\leq Cx_n^{-1}.
			\end{equation}
			
			Because $u\equiv 1$ on $\partial\R^n_+$ and $u$ is globally  $\gamma$-H\"{o}lder continuous, $u>0$ in $\{0<x_n<\delta\}\times(-\infty,0)$ for some $\delta>0$. Thus $u$ is smooth near $\partial\R^n_+\times(-\infty,0)$, and $\partial_{x_i}u\equiv 0$ on $\partial\R^n_+\times(-\infty,0)$.
			
			Take an aribitrary $i\in\{1,\dots, n-1\}$. Denote $\varphi:=|\partial_{x_i}u|$. By Corollary \ref{lem B3}, it is a nonnegative, continuous sub-carloric function. By the discussions in the previous two paragraphs, $\varphi=0$ on $\partial\R^n_+\times(-\infty,0]$ and $\varphi\to0$ uniformly as $x_n\to+\infty$. We want to show that it is identically zero.

			Assume by the contrary that $\varphi$ is nonzero. Then
			\[\sup_{\R^n_+\times (-\infty,0)}\varphi>0.\]
			Take a sequence $(x_k,t_k)$ so that $\varphi(x_k,t_k)$ approaches this supremum. By the strong maximum principle, $t_k\to-\infty$.

			Assume $x_k=(x_{1,k}, \cdots, x_{n,k})$. By \eqref{A2.1}, $x_{n,k}$ is a bounded sequence. Take a subsequence so that $x_{n,k}\to x_{n,\infty}$.
			Let $x_k^\prime=(x_{k,1}, \cdots, x_{k,n-1}, 0)$, $u_k(x,t):=u(x_k^\prime+x,t_k+t)$ and $\varphi_k(x,t):=\varphi(x_k^\prime+x,t_k+t)$. By the global H\"{o}lder continuity of $u$ and Proposition \ref{prop regularity for two valued function}, up to a subsequnce, $u_k$  converges to a limit $u_\infty$ locally uniformly in $\R^n_+\times\R$, and  $\varphi_k=|\partial_{x_i}u_k|$ converges to the limit function $\varphi_\infty=|\partial_{x_i}u_\infty|$.
			
			Since
			\[\varphi_\infty((0, \cdots, x_{n,\infty}),0)=\sup_{\R^n_+\times (-\infty,0)}\varphi_\infty=\sup_{\R^n_+\times (-\infty,0)}\varphi>0,\]
			the strong maximum principle implies that $\varphi_\infty$ is a constant. This is a contradiction with the boundary value condition $\varphi_\infty=0$ on $\partial\R^n_+\times(-\infty,+\infty)$.
		\end{proof}
		
		To finish the proof of Theorem \ref{thm half space}, it remains only to prove the $n=1$ case.
		\begin{proof}[Proof of Theorem \ref{thm half space} in the one dimensional case]
			By \eqref{A2.1}, for any $t\in(-\infty,0]$, the Dirichlet energy
			\[D(t):=\int_0^{+\infty} \partial_xu(x,t)^2 dx\]
			is well-defined. Moreover, because $u$ is globally $\gamma$-H\"{o}lder continuous, there exists a constant $C$ such that
			\[|\partial_xu(x,t)|\leq C \quad \text{in } ~ \left\{0<x<\frac{1}{C}\right\}\times (-\infty,0].\]
			Combining this estimate with \eqref{A2.1}, we see
			\begin{equation}\label{global energy bound}
				\sup_{t\leq 0}D(t)<+\infty.
			\end{equation}
			
			By choosing suitable cut-off functions in the localized energy inequality \eqref{localized energy inequality 2} in the definition of stationary two-valued caloric functions, and noticing that $\partial_t u(0,t)\equiv 0$ (by the boundary condition and the smoothness of $u$ near the boundary), we get a global energy inequality: for any $-\infty<t_1<t_2\leq 0$,
			\begin{equation}\label{global energy inequality}
				D(t_2)-D(t_1)\leq -2\int_{t_2}^{t_1}\int_0^{+\infty} \partial_tu(x,t)^2dxdt.
			\end{equation}
			In view of \eqref{global energy bound}, this implies that
			\begin{equation}\label{A2.2}
				\int_{-\infty}^{0}\left(\int_0^{+\infty} \partial_tu(x,t)^2dx\right)dt<+\infty.
			\end{equation}
			
			For any $t_i\to-\infty$, let
			\[u_i(x,t)=u(x,t_i+t).\]
			By the global H\"{o}lder regularity of $u$ and Proposition \ref{prop regularity for two valued function}, up to a subsequence, $u_i$ converges to a limit $u_\infty$ locally uniformly on $\R_+\times\R$. Here $u_\infty$ is still a  H\"{o}lder continuous, stationary two-valued caloric function. Furthermore, by passing limit in \eqref{A2.2}, we deduce that
			\[\int_{-\infty}^{+\infty}\left(\int_0^{+\infty} \partial_tu_\infty(x,t)^2dx\right)dt=0.\]
			Hence $u_\infty$ is a function of $x$ only. Then condition (ii) in Definition \ref{stationary two-valued caloric fct}  implies that $\partial_{xx}u_\infty=0$ in $\{u_\infty>0\}$. However, since we still have $0\leq u_\infty\leq 1$ in $[0,+\infty)$ and $u_\infty(0)=1$, the only possibility is that $u_\infty\equiv 1$. This limit is independent of the choice of subsequences, so we arrive at the conclusion that
			\begin{equation}\label{A2.3}
				u(\cdot,t)\to 1 \quad \text{locally uniformly on }~ [0,+\infty) ~ \text{as} ~ t\to-\infty.
			\end{equation}
			
			Next  we claim that
			\begin{equation}\label{A2.4}
				\lim_{t\to-\infty}D(t)=0.
			\end{equation}
			To prove this claim, take an arbitrary $R>0$. By \eqref{A2.3}, there exists an $T_R$ such that if $t\leq -T_R$, $u(x,t)$ is very close to $1$ in $[0,2R]$. In particular, $u$ satisfies the heat equation in $(0,R)\times (-\infty,-T_R)$. In view of \eqref{A2.3}, by interior and boundary gradient estimates for the heat equation, we get
			\begin{equation}\label{A2.5}
				\lim_{t\to-\infty}\int_0^R \partial_xu(x,t)^2 dx=0.
			\end{equation}
			For the part on $[R,+\infty)$, we simply use \eqref{A2.1} to get
			\begin{equation}\label{A2.6}
				\int_R^{+\infty}\partial_xu(x,t)^2 dx\leq \frac{C}{R}.
			\end{equation}
			Combining \eqref{A2.5} and \eqref{A2.6}, and then letting $R\to+\infty$, we get \eqref{A2.4}.

			With \eqref{A2.4} in hand, by the monotonicity of $D(t)$ (from \eqref{global energy inequality}), we must have
			\[D(t)\equiv 0 \quad \text{for any} ~ t\leq 0. \]
			Therefore $\partial_xu=0$ a.e. in $\R_+\times(-\infty,0)$. Then $u\equiv 1$ in $\R^n_+\times(-\infty,0]$ because this holds on the boundary and $u$ is continuous.
		\end{proof}
		
	\end{appendices}

\end{document}